\definecolor{darkergreen}{rgb}{0.0, 0.5, 0.0}
\numberwithin{equation}{section}
\newtheorem{theorem}{Theorem}[section]
\newtheorem{lemma}[theorem]{Lemma}
\newtheorem{proposition}[theorem]{Proposition}
\newtheorem{assum}{Assumption}[section]
\newtheorem{definition}[theorem]{Definition}
\theoremstyle{definition}
\newtheorem{remark}[theorem]{Remark}
\begin{document}

\title{Weak coupling limit of a Brownian particle in the curl of the 2D GFF}
\author{{\bf Huanyu Yang}$^{\mbox{a,b}}$, {\bf Zhilin Yang}$^{\mbox{c}*}$\\ \\
{
\small $^{\mbox{a}}$ College of Mathematics and Statistics, Chongqing University, Chongqing 401331, China,\\
\small $^{\mbox{b}}$ Key Laboratory of Nonlinear Analysis and its Applications (Chongqing University), Ministry of Education,\\
\small $^{\mbox{c}}$ Tongji University, China.\\}
}
\date{}
\thanks
{E-mail\;address:yanghuanyu@cqu.edu.cn(H.Y.Yang),yangzhilin0112@163.com(Z.L.Yang). *Corresponding Author.}

\begin{abstract}
In this article, we study the weak coupling limit of the following equation in $\mathbb{R}^2$:
$$dX_t^\varepsilon=\frac{\hat{\lambda}}{\sqrt{\log\frac1\varepsilon}}\omega^\varepsilon(X_t^\varepsilon)dt+\nu dB_t,\quad X_0^\varepsilon=0. $$ Here $\omega^\varepsilon=\nabla^{\perp}\rho_\varepsilon*\xi$ with $\xi$ representing the $2d$ Gaussian Free Field (GFF) and $\rho_\varepsilon$ denoting an appropriate identity. $B_t$ denotes a two-dimensional standard Brownian motion, and $\hat{\lambda},\nu>0$ are two given constants. 
We use the approach from \cite{Cannizzaro.2023} to show that the second moment of $X_t^\varepsilon$ under the annealed law converges to $(c(\nu)^2+2\nu^2)t$ with a precisely determined constant $c(\nu)>0$, which implies a non-trivial limit of the drift terms as $\varepsilon$ vanishes. We also prove that in this weak coupling regime, the sequence of solutions converges in distribution to $\left(\sqrt{\frac{c(\nu)^2}{2}+\nu^2}\right)\widetilde{B}_t$ as $\varepsilon$ vanishes, where $\widetilde{B}_t$ is a two-dimensional standard Brownian motion.
\end{abstract}
\maketitle
\tableofcontents

\section{Introduction}
\subsection{The model and motivations}
In this article, we study the motion of a Brownian particle in $\mathbb{R}^2$, involving in a random environment, given by the solution to the SDE formally,
	\begin{align}\left\{\begin{aligned}\label{SDE1}
            &dX_t=b(X_t)dt+\nu dB_t, \quad t\geq 0,\\
            &X_0=0,
	\end{aligned}\right.\end{align}
	where $(B_t)_{t\geq 0}$ is a two-dimensional standard Brownian motion, $\nu$ is a given positive parameter and the drift term
	\begin{equation}
		b(x)=\lambda\nabla^\perp\xi(x):=\lambda(\partial_{x_2}\xi(x),-\partial_{x_1}\xi(x)),
	\end{equation}
	with $\xi$ the two-dimensional Gaussian Free Field ($2d$ GFF) and $\lambda$ the positive coupling constant. 

Since the $2d$ GFF $\xi$ is a distribution in $\mathcal{C}^{\kappa}$, the space of H\"older distributions with regularity $\kappa$, for any $\kappa<0$, the equation \eqref{SDE1} cannot be understood in the classical sense. In fact, for a general distributional drift term $b\in\mathcal{C}^\alpha$ in \eqref{SDE1} for some $\alpha<0$, it is more reasonable to consider the weak solution to such singular SDEs. 
For $\alpha\in[-\frac12,0)$, 
the authors in \cite{Zhang.2017} demonstrated the existence and uniqueness of the weak solution by analyzing the associated Kolmogorov backward equation
$$\partial_tu=\frac{\nu^2}{2}\Delta u+ b\cdot\nabla u.$$
But for the case $\alpha<-\frac12$, by {the} Schauder's estimate, $u$ is expected to belong to $\mathcal{C}^{\alpha+2}$, hence the product $b\cdot\nabla u$ is not well-defined.  
For $\alpha\in (-\frac23,-\frac12)$, if $b$ can be enhanced to a rough distribution,
the authors in \cite{Cannizzaro.2015} and \cite{Delarue.2016} independently proved the existence of a unique martingale solution by applying rough path and the theory of paracontrolled distribution. Recently, with $\alpha\in (-1,0]$ and a divergence-free $b$, \cite{Grafner.2023} constructed a unique
energy solution on the torus with initial probability distribution possessing an $L^2$-density with respect to the Lebesgue measure by constructing the infinitesimal generators of SDE \eqref{SDE1} via the Wiener-Chaos decomposition.
Hao and Zhang in \cite{Hao.2023} further improved the existence results in  \cite{Grafner.2023} for any initial probability measure and on the whole space by using the Zvonkin’s transformation. 
Additionally, they
established the existence of the weak solution in the case that $\alpha=-1$ and $b$ is divergence-free.
We also mention that the singular SDEs driven by the L\'evy process have been considered in \cite{Kremp.2022}.

In our case, the stationary divergence-free Gaussian vector $b=\lambda\nabla^\perp\xi\in\mathcal{C}^{-1+\kappa},$ for any $\kappa<0$, which falls into the supercritical regime in the scope of \cite{Hao.2023}. Moreover, it is below the threshold $-1$ given in the literatures mentioned above. Thus we consider a regularized version of \eqref{SDE1}. 

For a better comprehension of SDE (\ref{SDE1}) for general $d\geq 2$, we now consider a regularized version of \eqref{SDE1} in $\mathbb{R}^d$ by convolving $\nabla^\perp\xi$ with $\rho_\varepsilon:=\frac{1}{\varepsilon^d}\rho_1(\frac1\varepsilon\cdot)$, where $\rho_1$ represents a smooth bump function. It is given by 
\begin{align}\left\{\begin{aligned}\label{SDE2}
&dY_t^\varepsilon=\lambda\rho_{\varepsilon}*\nabla^\perp\xi(Y_t^\varepsilon)dt+\nu dB_t, \quad t\geq 0,\\
&Y_0^\varepsilon=0.
\end{aligned}\right.\end{align}
Since the drift term in (\ref{SDE2}) is smooth, we could obtain the global existence and uniqueness of the strong solutions $Y_t^\varepsilon$ to \eqref{SDE2} in $\mathbb{R}^d$, where $B_t$ is a $d$-dimensional standard Brownian motion, and $\nabla^\perp\xi$ is the Leray projection of white noise with the Fourier transform of the covariance function {$R(x-y)=\mathbb{E}(\nabla^\perp\xi)(x)\otimes (\nabla^\perp\xi)(y)$} given by
\begin{align}\label{def:cor-d}
\hat{R}(p)=(2\pi)^2(I_{d\times d}-|p|^{-2}p\otimes p).
\end{align}
For $d=2$, in the context of \cite{Hao.2023}, there is no hope to see the convergence of $Y^\varepsilon$ as $\varepsilon\to 0$. Instead, they made a conjecture in \cite[Remark 1.6]{Hao.2023} that the convergence results for $Y^\varepsilon$ holds if we consider the coupling constant $\lambda=O(1/\sqrt{\log \varepsilon})$. Such $1/\sqrt{\log \varepsilon}$ coupling has been observed in variant of models (see e.g. \cite{Caravenna.2017,Caravenna.2020,Caravenna.2023,Cannizzaro.2023,Cannizzaro.2024,Cannizzaro.2024cmp,Cannizzaro.2024cg}). 

To see why $1/\sqrt{\log \varepsilon}$ coupling works in $\mathbb{R}^2$, we can look at the long-time fluctuation of (\ref{SDE2}) for $\varepsilon=1$ by rescaling the solution $Y^1$ to \eqref{SDE2} in $\mathbb{R}^d$ diffusively as
\begin{align}\label{scale}
X^\varepsilon_t:=\varepsilon Y_{\frac{t}{\varepsilon^2}}^1
\end{align}
for all $\varepsilon>0$. It is worth noting that the Brownian motion $B_t$ remains invariant in law under the scaling $\varepsilon B_{\frac{t}{\varepsilon^2}}$. Hence we retain the notation $B_t$ for simplicity. Moreover, since $\nabla^\perp\xi$ is invariant in law under the scaling $\varepsilon^{\frac d2}\nabla^\perp\xi(\varepsilon\cdot)$ by \eqref{def:cor-d}, $X^\varepsilon$ satisfies
\begin{equation}\label{eq:scale}
dX_t^\varepsilon=\lambda\varepsilon^{\frac d2-1}\rho_\varepsilon*\nabla^\perp\xi(X_t^\varepsilon)dt+\nu dB_t, \quad X_0^\varepsilon=0.
\end{equation}

 It can be observed that $d=2$ is the critical dimension since the drift term in \eqref{eq:scale} is scaled invariant under the diffusive scaling. Thus $Y^\varepsilon$ has the same law as $X^\varepsilon$. In this case, $Y^1$ admits the $\sqrt{\log t}$-superdiffusivity i.e.
\begin{align}
\frac{\textbf{E}[|Y^1(t)|^2]}{t}\approx \sqrt{\log t},\quad t\to\infty.\label{conjecture}
\end{align}
This property was conjectured by T\'oth and Valk\'o in \cite{Toth.2012} and recently proved in
 \cite{Cannizzaro.2022} and \cite{Chatzigeorgiou.2022} separately. As a consequence,  the solution $X_t^\varepsilon$ to \eqref{eq:scale} does not converge as $\varepsilon\to 0$. More precisely, 
by \eqref{conjecture}, we observe that $\textbf{E}[|X_t^\varepsilon|^2]=\varepsilon^2\textbf{E}[|Y^1_{\frac{t}{\varepsilon^2}}|^2]\approx t\sqrt{\log\frac{t}{\varepsilon^2}},\varepsilon\to 0.$ This suggests that there is no hope for the rescaled equation (\ref{eq:scale}) to have meaningful limit for fixed $t$ as $\varepsilon \to 0$. Furthermore, since {for every $t\geq 0$, $\textbf{E}[|X_t^\varepsilon|^2]\sim \sqrt{\log\frac{1}{\varepsilon}}, \varepsilon\to 0$}
 , it is natural to ask whether the weak coupling constant $\lambda=\frac{\hat{\lambda}}{\sqrt{\log\frac{1}{\varepsilon}}}$ could remove the log-superdiffusivity and make the law of $\{X^\varepsilon\}_\varepsilon$ tight. 

In the case where $d\geq3$, an invariance principle was obtained by \cite{Komorowski.2003} and \cite{Komorowski.2012}.
More precisely, they showed that as $\varepsilon\to0$, the solution $X^\varepsilon$ to \eqref{eq:scale} converges under the annealed law to $\sigma B$, where $B$ denotes a d-dimensional standard Brownian motion and $\sigma\geq\nu$ is only implicitly provided. In spite of the lack of explicit verification in the above references, we believe that $\sigma>\nu$ can be proved by applying the method in \cite{Toth.2012}, which implies a non-trivial limit for the drift terms.

The main result of this paper is that, in the weak coupling regime $\lambda=\frac{\hat{\lambda}}{\sqrt{\log\frac1\varepsilon}}$, the solution $X^\varepsilon$ to \eqref{eq:scale} converges under the annealed law to $\sigma \widetilde{B}$ with the explicitly diffusive coefficient $\sigma>\nu$, where $\widetilde{B}$ is a standard $2d$ Brownian motion. This implies that in the weak coupling regime, the drift term converges to a Brownian motion non-trivially.
Very recently, Armstrong, Bou-Rabee and Kuusi in \cite{Armstrong.2024} obtained a quenched invariance principle for $|\log\varepsilon^2|^{-\frac14}\varepsilon Y^1_{t/\varepsilon^2}$, which has the same limit as in the strong coupling regime $\varepsilon Y^1_{t/(\varepsilon^2\sqrt{\log\frac1\varepsilon})}$. Additionally, they derived the explicit diffusive coefficient which is independent of the parameter $\nu$.
They estimated the rate of superdiffusivity based on a quantitative, scale-by-scale approach in stochastic homogenization.

\subsection{Main results and ideas of proofs}\label{sec1.2} 
Let us first clarify the assumption on the mollifier $\rho$, which will be used throughout the paper.	
\begin{assum}\label{rho} 
    Let $\rho\in \mathcal{S}(\mathbb{R}^2)$ be radially symmetric, i.e. $\rho(x)=\gamma(|x|),\forall x\in\mathbb{R}^2$, for some $\gamma\in\mathcal{S}(\mathbb{R})$, and $\int_{\mathbb{R}^2}\rho(x)dx=1.$ Additionally, we assume that the Fourier transform of $\rho$ is supported in the ball of radius 1.
\end{assum} 
Let $\Omega$ be the space of smooth vector fields: 
\begin{align*}
\Omega=\{\sigma\in C^{\infty}(\mathbb{R}^2\to\mathbb{R}^2): \nabla\cdot\sigma=0,\; \sup_{x\in\mathbb{R}^2}(1+|x|)^{-\frac1r}|\partial^{m}\sigma_i(x)|<\infty,\;\forall i=1,2,\;r\in\mathbb{N},\;m\in\mathbb{N}^2\},
\end{align*}
with the $\sigma$-algebra $\mathcal{F}$ generated by cylindrical functionals on $\Omega$. Let $\xi$ be the $2d$ GFF. We assume that $\omega:=\nabla^\perp\rho*\xi$, the curl of the $2d$ GFF convolved with $\rho$, has law $\mathbb{P}$ on $\Omega$ (and corresponding expectation $\mathbb{E}$). Then it is a centered Gaussian field defined on the probability space $(\Omega,\mathcal{F},\mathbb{P})$ with covariance given by
\begin{align*}
    \mathbb{E}[\omega_i(x)\omega_j(y)]=\partial_{x_i}^\perp\partial_{x_j}^\perp V*g(x-y),\quad \forall x,y\in \mathbb{R}^2, \quad i,j=1,2,
\end{align*} 
 where $g(x)=-(2\pi)\log|x|$ and 
 $V(x)=\rho*\rho(x)=\int_{\mathbb{R}^2}\rho(x-y)\rho(y)dy.$ 
 
We focus on the rescaled equation of \eqref{eq:scale} in the weak coupling regime in $\mathbb{R}^2$. For all $\varepsilon>0$, let $\xi^\varepsilon$ denote the $2d$ GFF regularized by $\rho_\varepsilon:=\varepsilon^{-2}\rho(\varepsilon^{-1}\cdot)$ on the probability space $(\Omega,\mathcal{F},\mathbb{P})$. Consequently, its covariance function is given by 
\begin{align*}
    \mathbb{E}[\xi^\varepsilon(x)\xi^\varepsilon(y)]=V_\varepsilon*g(x-y),\quad \forall x,y\in \mathbb{R}^2,
\end{align*} 
where $V_\varepsilon=\rho_\varepsilon*\rho_\varepsilon$. Define $\omega^\varepsilon:=\nabla^\perp\xi^\varepsilon$, then the scaled solutions are represented by
\begin{equation}
    dX_t^\varepsilon=\frac{\hat{\lambda}}{\sqrt{\log\frac{1}{\varepsilon}}}\omega^{\varepsilon}(X_t^\varepsilon)dt+\nu dB_t, \quad X_0^\varepsilon=0, \label{scaleSDE}
\end{equation}
where $\hat{\lambda},\nu>0$ are two constants, and $(B_t)_{t\geq 0}$ is a two-dimensional standard Brownian motion defined on another probability space $(\Sigma,\mathcal{A},Q)$.

Let $\textbf{P}=\mathbb{P}\otimes\mathbb{Q}$ be the annealed measure on the product space $(\Omega\times\Sigma,\mathcal{F}\otimes\mathcal{A}),$ corresponding to the expectation $\textbf{E}$. Then for every $\varepsilon>0$, $X_t^\varepsilon$ and $N_t^\varepsilon:=\frac{\hat{\lambda}}{\sqrt{\log\frac{1}{\varepsilon}}}\int_0^t\omega^{\varepsilon}(X_s^\varepsilon)ds$ are both defined on this probability space $(\Omega\times\Sigma,\mathcal{F}\otimes\mathcal{A},\textbf{P}),$ given by \eqref{scaleSDE}. Since the state space of $\omega^\varepsilon$ is $\Omega$, for every $\sigma\in \Omega$, by \cite[Theorem 8.3]{Le-Gall.2016}, $\{X_t^{\varepsilon,\sigma}\}_{t\geq 0}$ is defined as the unique strong solution to the SDE: 
\begin{align}\label{def:Y}
dY_t=\frac{\hat{\lambda}}{\sqrt{\log\frac{1}{\varepsilon}}}\sigma(Y_t)dt+\nu dB_t
\end{align}
on $\Sigma$. We are now ready to state the results of this paper.
The first one is about the asymptotical diffusivity of the drift $N_t^\varepsilon$ in the weak coupling regime.

\begin{theorem}
For every $t\geq0$, 
    $\lim_{\varepsilon\to 0}\mathbf{E}|N_t^{\varepsilon}|^2=c(\nu)^2t$, where the effective diffusion coefficient $c(\nu)=\sqrt{4\left(\sqrt{2\pi\hat{\lambda}^2+\frac{\nu^4}{4}}-\frac{\nu^2}{2}\right)}.$ \label{thm2.1}
\end{theorem}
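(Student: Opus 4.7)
The plan is to interpret $N^\varepsilon_t$ as an additive functional of the environment-seen-from-the-particle process, and then to analyze its two-time correlation via the resolvent of that process's generator, following the strategy of \cite{Cannizzaro.2023}. Let $\eta^\varepsilon_t(x) := \omega^\varepsilon(x + X^\varepsilon_t)$ and set $g(\sigma) := \tfrac{\hat\lambda}{\sqrt{\log(1/\varepsilon)}}\sigma(0)$, an $\mathbb{R}^2$-valued cylindrical functional on $\Omega$. Because $\omega^\varepsilon$ is divergence-free, $\mathbb{P}$ is invariant for $\eta^\varepsilon$, so by Fubini and stationarity one has
\[
\mathbf{E}|N^\varepsilon_t|^2 \;=\; 2\int_0^t (t-u)\,R^\varepsilon(u)\,du, \qquad R^\varepsilon(u) \;:=\; \langle g,\,T^\varepsilon_u g\rangle_{L^2(\mathbb{P})},
\]
where $T^\varepsilon_u$ is the semigroup of $\eta^\varepsilon$ with generator $\mathcal{L}^\varepsilon = \mathcal{S} + \mathcal{A}^\varepsilon$, the piece $\mathcal{S} = \tfrac{\nu^2}{2}\Delta_{\mathrm{shift}}$ being symmetric and $\mathcal{A}^\varepsilon = \tfrac{\hat\lambda}{\sqrt{\log(1/\varepsilon)}}\,\sigma(0)\!\cdot\!\nabla_{\mathrm{shift}}$ antisymmetric on $L^2(\mathbb{P})$. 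Laplace-transforming in $t$ turns the task into controlling $\langle g,(\mu - \mathcal{L}^\varepsilon)^{-1}g\rangle$ as $\mu\downarrow 0$.

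Next, I would Wiener-chaos decompose $L^2(\mathbb{P}) = \bigoplus_n \mathcal{H}_n$. The observable $g$ lies in $\mathcal{H}_1$, and $\mathcal{A}^\varepsilon$ is a ladder operator shifting the chaos index by $\pm 1$. Iterating the resolvent identity between $\mathcal{H}_1$ and the higher chaoses — a Dyson/loop expansion — produces an effective diagonal operator on $\mathcal{H}_1$ whose Fourier symbol is of the form $\mu + \tfrac{\sigma^2_{\mathrm{eff}}}{2}|p|^2$ for a self-consistently determined $\sigma^2_{\mathrm{eff}}$. Inserting this into the resolvent yields the approximation
\[
\langle g,(\mu - \mathcal{L}^\varepsilon)^{-1}g\rangle \;\approx\; \frac{\hat\lambda^2}{\log(1/\varepsilon)}\int_{\mathbb{R}^2}\frac{|\hat\rho(\varepsilon p)|^2}{\mu + \tfrac{\sigma^2_{\mathrm{eff}}}{2}|p|^2}\,dp.
\]
The compact-support hypothesis on $\hat\rho$ in Assumption \ref{rho} restricts $|p|\lesssim 1/\varepsilon$, so in two dimensions this integral contributes a logarithm of size $\sim\tfrac{4\pi}{\sigma^2_{\mathrm{eff}}}\log(1/\varepsilon)$ which exactly cancels the prefactor $1/\log(1/\varepsilon)$, leaving a finite $\varepsilon\to 0$ limit.

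Matching that limit against the Tauberian target $\mathbf{E}|N^\varepsilon_t|^2 \sim c(\nu)^2 t$ produces a multiplicative relation between $c(\nu)^2$ and $\sigma^2_{\mathrm{eff}}$, which, together with the self-consistency pinning down $\sigma^2_{\mathrm{eff}}$ in terms of the effective diffusivity of $X^\varepsilon$ itself (the same quantity appearing in the companion invariance principle), closes into a quadratic equation whose positive root is exactly $c(\nu)^2 = 4\bigl(\sqrt{2\pi\hat\lambda^2 + \nu^4/4} - \nu^2/2\bigr)$. Since $R^\varepsilon(u)\,du$ concentrates on the microscopic Brownian-cutoff scale $u\sim\varepsilon^2$, the weight $(t-u)/t$ is essentially $1$ on its effective support, upgrading the Laplace-transform convergence to the pointwise-in-$t$ statement $\mathbf{E}|N^\varepsilon_t|^2 \to c(\nu)^2 t$. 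The hard part will be the rigorous control of the Dyson tail: since $\mathcal{A}^\varepsilon$ is UV-divergent (the variance of $\omega^\varepsilon(0)$ blows up like $\varepsilon^{-2}$), showing that the contributions of $\mathcal{H}_n$ for $n\geq 2$ close exactly into this self-consistent shift and that all higher-loop pieces are strictly subleading in $1/\log(1/\varepsilon)$ requires sharp $\varepsilon$-uniform resolvent bounds on each chaos layer, obtained through a graphical replacement argument.
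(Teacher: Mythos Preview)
Your overall architecture is right --- Laplace transform, resolvent of the environment process, Wiener chaos, ladder structure of $\mathcal{A}^\varepsilon$ --- and you correctly identify \cite{Cannizzaro.2023} as the template. But the proposal has a real gap at its center.

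You write that iterating the resolvent identity produces an effective diagonal operator on $\mathcal{H}_1$ with symbol $\mu + \tfrac{\sigma^2_{\mathrm{eff}}}{2}|p|^2$ for a \emph{constant} $\sigma^2_{\mathrm{eff}}$. This is not what happens, and trying to force a constant will not close. The paper's Replacement Lemma (Proposition~\ref{prop:rep}) shows that the iterated operators $\mathcal{H}^\varepsilon_j$ are approximated by $-\tfrac{4}{\nu^4}G_j\bigl(L^\varepsilon(\lambda-\mathcal{L}_0)\bigr)\mathcal{L}_0$, where $L^\varepsilon(x)=\tfrac{\pi\hat\lambda^2}{|\log\varepsilon^2|}\log(1+\tfrac{1}{\varepsilon^2 x})$ encodes the \emph{scale} and $G_j$ is defined by the recursion $G_{j+1}'=1/(1+\tfrac{4}{\nu^4}G_j)$, $G_j(0)=0$. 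The effective diffusivity is therefore scale-dependent at every finite stage; only after sending $\varepsilon\to 0$ (so $L^\varepsilon(\lambda)\to\pi\hat\lambda^2$) and then $j\to\infty$ does one get a number. The quadratic you anticipate is not a self-consistency fixed point but the solution of the limiting ODE $G'=1/(1+\tfrac{4}{\nu^4}G)$, which integrates to $G+\tfrac{2}{\nu^4}G^2=x$ and gives $G(\pi\hat\lambda^2)=\tfrac{\nu^4}{4}(\sqrt{8\pi\hat\lambda^2/\nu^4+1}-1)$. Missing this scale-by-scale structure means your ``Dyson tail'' control has no chance: the higher-chaos contributions are not subleading corrections to a constant shift, they \emph{are} the mechanism generating the nonlinear recursion.

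Two further points where your route diverges from what actually works. First, the paper does not bound a Dyson tail at all; it uses the monotone sandwich $\langle\mathcal{V}_1^\varepsilon,\psi^{\varepsilon,2n+1}\rangle_\varepsilon\le\langle\mathcal{V}_1^\varepsilon,(\lambda-\mathcal{L}^\varepsilon)^{-1}\mathcal{V}_1^\varepsilon\rangle_\varepsilon\le\langle\mathcal{V}_1^\varepsilon,\psi^{\varepsilon,2n}\rangle_\varepsilon$ from \cite[Lemma~4.1]{Cannizzaro.2022}, so one only needs the $\varepsilon\to 0$ limit of the truncated quantities, followed by $n\to\infty$. Second, your passage from the Laplace transform to pointwise-in-$t$ convergence via ``$R^\varepsilon(u)\,du$ concentrates at scale $\varepsilon^2$'' is heuristic and not how the paper proceeds: it instead shows $x^\varepsilon(t):=\mathbf{E}|N^\varepsilon_t|^2$ is equicontinuous (via the It\^o trick, Lemma~\ref{lem:Ito}) with a uniform bound $x^\varepsilon(t)\lesssim t+t^2$, applies Arzel\`a--Ascoli, and identifies any limit point by uniqueness of the Laplace transform.
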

\begin{remark}
Note that the limit diffusive constant $c(\nu)$ depends on $\nu$, and it converges to a non-zero constant when $\nu\to0$. It is different from
the case in \cite{Armstrong.2024} where the diffusion coefficient is independent of $\nu$ since the Brownian motion term vanishes as $\varepsilon\to0$ in the strong coupling regime. 
\end{remark}

Theorem \ref{thm2.1} implies that as $\varepsilon\to 0$, the limit of $N^\varepsilon$ has a similar diffusive behaviour as Brownian Motion. 
Indeed, the following theorem demonstrates an invariance principle for $\{X_{\cdot}^\varepsilon\}_\varepsilon$ in the weak coupling regime. 
{\begin{theorem}
    For every $\varepsilon>0$, let $X^{\varepsilon}$ be the solution to \eqref{scaleSDE}. Then as $\varepsilon\to 0$, it converges under the annealed law $\textbf{P}$ to a Brownian Motion $X_\cdot=\left(\sqrt{\frac{c(\nu)^2}{2}+\nu^2}\right)\widetilde{B_\cdot}$, where $\widetilde{B_\cdot}$ is a two-dimensional standard Brownian motion on $\Omega\times\Sigma$ and $c(\nu)$ is the diffusion coefficient given in Theorem \ref{thm2.1}.\label{thm2.2}
\end{theorem}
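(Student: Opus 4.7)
The plan is to prove the invariance principle by combining tightness of $\{X^\varepsilon\}$ in $C([0,T];\mathbb{R}^2)$ with identification of the limit through a martingale central limit theorem. The decomposition $X_t^\varepsilon = N_t^\varepsilon + \nu B_t$ makes Theorem~\ref{thm2.1} the natural starting point: it fixes the limiting second moment of the drift, so the remaining work is to upgrade this $L^2$ statement to functional convergence and to show that the drift and the driving Brownian motion decorrelate in the limit.

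For tightness I would invoke Kolmogorov's criterion, which reduces matters to uniform (in $\varepsilon$) bounds of the form $\mathbf{E}|X_t^\varepsilon - X_s^\varepsilon|^{2p} \leq C_p|t-s|^p$ for some $p>1$. The Brownian part is trivial; for $N^\varepsilon$, Gaussian hypercontractivity applied to the Wiener chaos representation that already underlies Theorem~\ref{thm2.1} upgrades the second-moment estimate to arbitrary even moments, after the nonlinear dependence of $N^\varepsilon$ on $X^\varepsilon$ is linearized through the corrector introduced below.

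To identify the limit, I would follow the It\^o trick / Kipnis--Varadhan route used in \cite{Cannizzaro.2023}. Introduce the generator $\mathcal{L}^\varepsilon = \tfrac{\nu^2}{2}\Delta + \tfrac{\hat\lambda}{\sqrt{\log(1/\varepsilon)}}\omega^\varepsilon\cdot\nabla$ of $X^\varepsilon$ and construct a corrector $u^\varepsilon$ by solving the regularized resolvent equation $(\mu_\varepsilon - \mathcal{L}^\varepsilon)u^\varepsilon = \omega^\varepsilon$ with $\mu_\varepsilon\to 0$ at a tuned rate. Applying It\^o's formula to $u^\varepsilon(X_\cdot^\varepsilon)$ then produces a decomposition $N_t^\varepsilon = M_t^\varepsilon + R_t^\varepsilon$, where $M^\varepsilon$ is an $L^2$-martingale whose bracket is driven by $\nabla u^\varepsilon$, and $R_t^\varepsilon$ -- collecting the boundary term $u^\varepsilon(X_t^\varepsilon)-u^\varepsilon(0)$ and the residual $\int_0^t\mu_\varepsilon u^\varepsilon(X_s^\varepsilon)\,ds$ -- vanishes in $L^2$ uniformly on $[0,T]$ by the very resolvent estimates already deployed in the proof of Theorem~\ref{thm2.1}.

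The last step is the martingale CLT. The expectation of the bracket $\langle M^\varepsilon\rangle_t$ converges to $\tfrac{c(\nu)^2}{2}\,t\,I_{2\times 2}$ by Theorem~\ref{thm2.1} (taking account of the $\mathbb{R}^2$-valued nature of $N^\varepsilon$), and a variance bound via the chaos expansion promotes this convergence to probability. The cross-bracket $\langle M^\varepsilon,\nu B\rangle_t$ would be shown to vanish in $L^2$ by exploiting the oddness of $\nabla u^\varepsilon$ under the natural symmetry of the Gaussian environment, so that $(M^\varepsilon,\nu B)$ converges jointly to a pair of independent $2d$ Brownian motions with covariances $\tfrac{c(\nu)^2}{2}tI$ and $\nu^2 t I$; summing gives precisely $\sqrt{c(\nu)^2/2+\nu^2}\,\widetilde{B}$. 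The main obstacle is the construction and sharp analysis of $u^\varepsilon$: the operator $\mathcal{L}^\varepsilon$ is supercritical, and the logarithmic weak coupling must be exploited delicately both to close the resolvent estimates and to reproduce the precise constant $c(\nu)$ predicted by Theorem~\ref{thm2.1}.
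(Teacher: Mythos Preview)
Your overall architecture---tightness via Kolmogorov, a corrector decomposition of $N^\varepsilon$ into a martingale plus a vanishing remainder, and identification of the limit via L\'evy's characterisation and a martingale CLT---is exactly what the paper does. But there is a genuine gap in the heart of your argument: the construction of the corrector.

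You propose to solve $(\mu_\varepsilon-\mathcal{L}^\varepsilon)u^\varepsilon=\omega^\varepsilon$ with $\mu_\varepsilon\to0$ and claim the boundary and residual terms vanish ``by the very resolvent estimates already deployed in the proof of Theorem~\ref{thm2.1}''. This is precisely where the paper says the naive approach fails. First, the symmetric part $-\mathcal{L}_0$ of the generator of the environment process is only positive \emph{semi}-definite (its Fourier multiplier is $\tfrac{\nu^2}{2}|\sum_i p_i|^2$, not $\sum_i|p_i|^2$), so one cannot send the spectral parameter to zero; the paper works with a \emph{fixed} $\lambda>0$ throughout. Second, and more seriously, the full resolvent $(\lambda-\mathcal{L}^\varepsilon)^{-1}\mathcal{V}^\varepsilon$ lives in infinitely many chaos components and cannot be analysed directly: Theorem~\ref{thm2.1} is \emph{not} proved by direct resolvent estimates, but by the iterative truncation $\mathcal{L}_n^\varepsilon=I_{\le n}\mathcal{L}^\varepsilon I_{\le n}$ together with the Replacement Lemma, which approximates the recursively defined operators $\mathcal{H}_j^\varepsilon$ by explicit functions of $\mathcal{L}_0$. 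The paper's corrector is the explicit iterated object $b^{\varepsilon,n}=\sum_{j=1}^n b_j^{\varepsilon,n}$ of \eqref{iter-b}, and the required estimates (Theorem~\ref{thm5.2}) are obtained by repeated application of the Replacement Lemma, followed by a double limit $\varepsilon\to0$ then $n\to\infty$. Your proposal contains no hint of this mechanism, and the phrase ``sharp analysis of $u^\varepsilon$'' in your final paragraph is where the entire difficulty lies.

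A smaller point: the cross-bracket $\langle M^\varepsilon,\nu B\rangle_t$ does not vanish by a parity/symmetry argument as you suggest. In the paper it equals $\nu\int_0^t D(b^{\varepsilon,n})_i(\eta_s^\varepsilon)\,ds$, and its $L^2$-norm is controlled via the It\^o trick by $\|(b^{\varepsilon,n})_i\|_\varepsilon$, which is shown to vanish as $\varepsilon\to0$ by the explicit resolvent bound~\eqref{ineq-b1}.
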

}
\begin{remark}
Recall that $\nabla^\perp\xi\in\mathcal{C}^{-1+\kappa}$, for any $\kappa<0$. This weak coupling limit implies that the result in \cite{Hao.2023} is sharp, i.e. there's no hope to generalize the classical approach to treat singular SDE \eqref{SDE1} when $b\in\mathcal{C}^\alpha,\;\alpha<-1$.
\end{remark}

{Our argument for the proof of Theorem \ref{thm2.2} is based on the classical argument in the study of random walk in random environment (RWRE). For every $\varepsilon>0$, we consider the environment process seen from the particle, denoted as $\eta^\varepsilon_t:=\tau_{X_t^\varepsilon}\omega^\varepsilon$, where $\{\tau_x\}_{x\in\mathbb{R}^2}$ is the family of translation operators on the state space $\Omega$ of $\eta_t^\varepsilon$: for $x\in\mathbb{R}^2$, $\sigma\in\Omega,$ $\tau_x \sigma:=\sigma(x+\cdot).$
The advantage of working with $\{\eta_t^\varepsilon\}_{t\geq0}$ is that it is a Markov process with generator $\mathcal{L}^\varepsilon$ and an ergodic invariant measure $\mathbb{P}^\varepsilon$.
Then $X_t^\varepsilon$ can be decomposed into a martingale and an additive functional of the environment process,
$$
X^\varepsilon_t=\int_0^t\mathcal{V}^\varepsilon(\eta^\varepsilon_s)ds+\nu\varepsilon B_{\frac{t}{\varepsilon^2}}.
$$
{Since $\varepsilon B_{\frac{t}{\varepsilon^2}}\overset{d}{=}B_t$, we only neet to treat the additive functional part. By L\'evy's characterization, we need to prove the limit of such additive functional is a martingale. This is in the setting of the Kipnis-Varadhan's theory \cite{Kipnis.1986} (see also \cite{Komorowski.2012} for a comprehensive study). Heuristically, the idea is to solve the Poisson equation w.r.t the generator $\mathcal{L}^\varepsilon$. Once we can find $\Phi^\varepsilon$ such that $-\mathcal{L}^\varepsilon\Phi^\varepsilon=\mathcal{V}^\varepsilon$, by Dynkin's formula, {there exists a two-dimensional martingale $M(\Phi^\varepsilon)$ w.r.t $\mathcal{F}_t=\sigma(\eta_s^\varepsilon,s\leq t)$ such that
$$\Phi^\varepsilon(\eta_t^\varepsilon)-\Phi^\varepsilon(\eta_0^\varepsilon)-\int_0^t\mathcal{L}^\varepsilon\Phi^\varepsilon(\eta_s^\varepsilon)ds=M_t(\Phi^\varepsilon).$$
Then the additive functional can be expressed as 
$$
\int_0^t\mathcal{V}^\varepsilon(\eta^\varepsilon_s)ds=M_t(\Phi^\varepsilon)+o(1).
$$
Here $o(1)$ contains the boundary terms $\Phi^\varepsilon(\eta_s^\varepsilon), s=0$ and $t$, which vanish as $\varepsilon\to 0.$ Therefore it suffices to prove that $\{M(\Phi^\varepsilon)\}_\varepsilon$ converges to a continuous martingale with the correct quadratic variation, which can be done via the classical approach such as \cite[Theorem 2.29]{Komorowski.2012}.

This strategy is hard to be applied to our case since the generator
$\mathcal{L}^\varepsilon$ is not invertible thus the generator equation cannot be solved directly. Moreover, the approximation in \cite{Komorowski.2012} does not work for our case since the superdiffusivity in \cite{Cannizzaro.2022}\cite{Chatzigeorgiou.2022} implies that for every $\varepsilon>0$,  $\lim_{\lambda\to0}\mathbb{E}^\varepsilon(\mathcal{V}^\varepsilon(\lambda-\mathcal{L}^\varepsilon)^{-1}\mathcal{V}^\varepsilon)=\infty.$  Instead, we apply the iterative truncation method to approximate the resolvent $(\lambda-\mathcal{L}^\varepsilon)^{-1}$ and use the Replacement Lemma to get the explicit diffusion coefficient. 
This iterative truncation method for the generator first appeared in \cite{Yau.2004} and \cite{Landim.2002}. Later in \cite{Cannizzaro.2023cpam}, Cannizzaro, Erhard and Toninelli applied this method to the energy solution framework developed in a sequence of papers \cite{Goncalves.2013, Goncalves.2014,Goncalves.2015, Gubinelli.2017,Gubinelli.2020} for the $2d$ anisotropic KPZ equation (AKPZ) to obtain the upper and lower bounds for $(\lambda-\mathcal{L}^\varepsilon)^{-1}$ for fixed $\lambda,\varepsilon$, thereby proving the logarithmic superdiffusivity of the $2d$ AKPZ. This approach was later applied to our model in \cite{Cannizzaro.2022}, and very recently to $2d$ stochastic Burgers equation in \cite{Gaspari.2024}. Additionally, in order to determine the large-scale (Gaussian) fluctuations at criticality, it is required to derive a family of approximate operators. The authors in \cite{Cannizzaro.2023} first use the Replacement Lemma to get such an approximate sequence for $(\mathcal{L}^\varepsilon)^{-1}$, and hence they get the weak coupling limit for $2d$ AKPZ. Very recently, a simplified version of this lemma was applied in \cite{Cannizzaro.2024cmp} for the weak coupling limit of $2d$ Stochastic Burgers equation, and in \cite{Cannizzaro.2024cg} for the model of $2d$ Self-Repelling Brownian Polymer.
Note that the symmetric part of $\mathcal{L}^\varepsilon$ is not negative definite in our model, which is different from the AKPZ in \cite{Cannizzaro.2023}. Thus we need to consider the approximation of $(\lambda-\mathcal{L}^\varepsilon)^{-1}$ for fixed $\lambda>0$, instead of $\lambda=0$.}
 
\textbf{Organization of the paper:} 
The rest of this paper is organized as follows. In Section \ref{sec3}, we recall some results on Wiener chaos decomposition form \cite{Cannizzaro.2022}. Then we prove some preliminary estimates on the generator of the environment process. The proof of Theorem \ref{thm2.1} is detailed in Section \ref{sec4}, where we introduce the Replacement Lemma as a tool for approximating the resolvent. Section \ref{sec5} and \ref{sec6} are dedicated to the proof of Theorem \ref{thm2.2}. We outline the proof strategy and use the Replacement Lemma to approximate the solution of generator equation. 
The appendix contains technical results necessary for the proof of the Replacement Lemma in Section \ref{sec4}.

\textbf{Notations:}
Throughout the paper, we employ the notation $a\lesssim b$ if there exists a constant $c>0$ such that $a\leq cb$, and $\lesssim_{\lambda}$ means the constant $c=c(\lambda)$, only depending on $\lambda$. $\nabla^\perp=(\partial^\perp_{x_1},\partial^\perp_{x_2}):=(\partial_{x_2},-\partial_{x_1})$. We use $\mathcal{S}(\mathbb{R}^2)$ to denote the space of Schwartz functions on $\mathbb{R}^2$, and we define the Fourier transform $\mathcal{F}(f)=\hat{f}$ for $f \in\mathcal{S}(\mathbb{R}^2)$ by
 $$\mathcal{F}(f)(p)=\hat{f}(p):=\int_{\mathbb{R}^2}f(x)e^{-\imath p\cdot x}dx.$$

\section{Preliminaries}\label{sec3}
In this section, we start by establishing some preliminary properties of the environment process $\eta_\cdot^\varepsilon$ seen by the particle, thereby deriving its generator $\mathcal{L}^\varepsilon$. Then we recall the Wiener chaos decomposition and obatin the action of $\mathcal{L}^\varepsilon$ on the Fock space. In order to iteratively approximate the solution to the resolvent equation discussed in the following section, we conduct estimates on the anti-symmetry component of $\mathcal{L}^\varepsilon$.

Recall that $N_t^{\varepsilon}=\begin{pmatrix}
N_{t}^{\varepsilon,1}\\N_{t}^{\varepsilon,2}
\end{pmatrix}$ is defined as $\frac{\hat{\lambda}}{\sqrt{\log\frac{1}{\varepsilon}}}\int_0^t\omega^{\varepsilon}(X_s^\varepsilon)ds$. For every $\varepsilon>0$, according to Assumption \ref{rho} and the definition of $\rho_\varepsilon$, $V_\varepsilon$ exhibits rotational invariance. Hence $\textbf{E}|N_t^{\varepsilon}|^2=2\textbf{E} (N_{t}^{\varepsilon,1})^2$. To prove Theorem \ref{thm2.1}, similarly as in \cite{Cannizzaro.2022}, it is more convenient to consider the Laplace transform of $\textbf{E} (N_{t}^{\varepsilon,1})^2$ rather than the expectation itself. Since we are dealing with diffusion in a random environment, we introduce the environment process $\eta_t^{\varepsilon}=\tau_{X_t^{\varepsilon}}\omega^{\varepsilon}$, which has been mentioned in Subsection \ref{sec1.2} and $\{\tau_x,x\in\mathbb{R}^2\}$ is the group of translations on $\Omega$. Denote $\mathbb{P}^
\varepsilon$ as the distribution of $\omega^\varepsilon$ (corresponding to the expectation $\mathbb{E}^\varepsilon$). 
{As described in \cite[Section 9.3.2]{Komorowski.2012}, for every $\varepsilon>0$, we introduce the generators of the group $\{\tau_x:x\in\mathbb{R}^2\}$ and denote them by $D_k: \mathcal{D}(D_k)\to L^2(\mathbb{P}^\varepsilon),k=1,2,$ where $\mathcal{D}(D_k)$ represents the domain of the generator. Then for $f\in\mathcal{D}(D_k),$ 
\begin{align}\label{def:D}
D_kf(\sigma)=\lim_{t\to0}\frac{f(\tau_{te_k}\sigma)-f(\sigma)}{t},\quad\forall\sigma\in\Omega,
\end{align}
where $\{e_1,e_2\}$ is the canonical basis of $\mathbb{R}^2$.
We also define the subspace $H^{2,\varepsilon}$ of $L^2(\mathbb{P}^\varepsilon)$ as the intersection of domains of $(D_1)^{m_1}(D_2)^{m_2}, m_1+m_2=2.$

First, we give some basic properties about the environment process for every $\varepsilon>0$.
\begin{lemma}
For every $\varepsilon>0$, the process $\{\eta_t^\varepsilon\}_{t\geq 0}$ is Markovian, and $\mathbb{P}^\varepsilon$ is an invariant and ergodic measure for its semigroup $\{P_t^\varepsilon\}_{t\geq 0}$. Denote its generator as $\mathcal{L}^\varepsilon$, then it can be decomposed into symmetric part $\mathcal{L}_0$ and anti-symmetric part $\mathcal{A}^\varepsilon$. More precisely, for every $f\in H^{2,\varepsilon}$, we have
\begin{align}\label{gen1}
\mathcal{L}_0 f=\frac{\nu^2}{2}\sum_{k=1}^2 (D_k)^2 f,\quad
\mathcal{A}^\varepsilon f=\mathcal{V}^\varepsilon \cdot Df,
\end{align}
where $Df=\begin{pmatrix}
    D_1f\\D_2f
\end{pmatrix}$, and $\mathcal{V}^\varepsilon$ is a random vector on $(\Omega,\mathbb{P}^\varepsilon)$ defined by 
\begin{align}\label{def:V}
\mathcal{V}^\varepsilon(\sigma):=\frac{\hat{\lambda}}{\sqrt{\log\frac1\varepsilon}}\sigma(0),\quad\forall\sigma\in\Omega.
\end{align}
\end{lemma}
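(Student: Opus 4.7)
The plan is to identify $\mathcal{L}^\varepsilon$ explicitly by applying It\^o's formula on cylindrical test functionals, and then derive the remaining structural properties (Markovianity, invariance and ergodicity of $\mathbb{P}^\varepsilon$, and the symmetric/antisymmetric splitting) as consequences of this explicit form together with the translation invariance and divergence-free property built into the law of $\omega^\varepsilon$.

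First, I would fix a smooth cylindrical functional $F\colon\Omega\to\mathbb{R}$ (of the form $F(\sigma)=\varphi(\langle\sigma,h_1\rangle,\ldots,\langle\sigma,h_n\rangle)$ with $\varphi\in C_b^\infty$ and $h_i\in\mathcal{S}(\mathbb{R}^2;\mathbb{R}^2)$), so that $F\in H^{2,\varepsilon}$ and the directional derivatives $D_kF$ defined through \eqref{def:D} coincide with the ordinary partial derivatives of $x\mapsto F(\tau_x\sigma)$ evaluated at $x=0$. Applying It\^o's formula to $t\mapsto F(\eta_t^\varepsilon)=F(\tau_{X_t^\varepsilon}\omega^\varepsilon)$ and using that $\langle X^{\varepsilon,k},X^{\varepsilon,l}\rangle_t=\nu^2\delta_{kl}t$ gives
\begin{equation*}
dF(\eta_t^\varepsilon)=\sum_{k=1}^2 D_kF(\eta_t^\varepsilon)\,dX_t^{\varepsilon,k}+\frac{\nu^2}{2}\sum_{k=1}^2 D_k^2F(\eta_t^\varepsilon)\,dt.
\end{equation*}
Substituting the SDE \eqref{scaleSDE} and using the identity $\omega^\varepsilon(X_t^\varepsilon)=(\tau_{X_t^\varepsilon}\omega^\varepsilon)(0)=\eta_t^\varepsilon(0)$, the drift is exactly $\mathcal{V}^\varepsilon(\eta_t^\varepsilon)\cdot DF(\eta_t^\varepsilon)+\tfrac{\nu^2}{2}\sum_k D_k^2F(\eta_t^\varepsilon)$ in the sense of \eqref{def:V}, while the stochastic part $\nu\sum_k D_kF(\eta_t^\varepsilon)dB_t^k$ is a true martingale. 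This identifies $\mathcal{L}^\varepsilon F=\mathcal{L}_0 F+\mathcal{A}^\varepsilon F$ with $\mathcal{L}_0$ and $\mathcal{A}^\varepsilon$ as in \eqref{gen1}, and simultaneously yields the Markov property for $\{\eta_t^\varepsilon\}$: the above SDE is autonomous in $\eta^\varepsilon$ and driven by the Brownian motion $B$, so a standard uniqueness/Dynkin argument on the cylindrical core extends to a well-posed martingale problem.

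Next, for invariance of $\mathbb{P}^\varepsilon$ it suffices to check $\int\mathcal{L}^\varepsilon F\,d\mathbb{P}^\varepsilon=0$ on the same cylindrical core. Because $\omega^\varepsilon$ is a stationary random field, $\mathbb{P}^\varepsilon$ is invariant under the group $\{\tau_x\}$, which in turn gives $D_k^*=-D_k$ on $L^2(\mathbb{P}^\varepsilon)$; hence $\int D_k^2 F\,d\mathbb{P}^\varepsilon=0$ and the symmetric part integrates to zero. For the antisymmetric part, integration by parts and the Leibniz rule give
\begin{equation*}
\int \mathcal{V}^\varepsilon\cdot DF\,d\mathbb{P}^\varepsilon=-\sum_{k=1}^2\int(D_k\mathcal{V}_k^\varepsilon)\,F\,d\mathbb{P}^\varepsilon,
\end{equation*}
and $\sum_k D_k\mathcal{V}_k^\varepsilon(\sigma)=\frac{\hat\lambda}{\sqrt{\log 1/\varepsilon}}(\nabla\cdot\sigma)(0)=0$ because every $\sigma\in\Omega$ is divergence-free. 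The same computation applied to $\langle\mathcal{L}_0 F,G\rangle_{L^2(\mathbb{P}^\varepsilon)}$ and $\langle\mathcal{A}^\varepsilon F,G\rangle_{L^2(\mathbb{P}^\varepsilon)}$ shows that $\mathcal{L}_0=\mathcal{L}_0^*$ and $(\mathcal{A}^\varepsilon)^*=-\mathcal{A}^\varepsilon$, confirming the asserted decomposition.

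The main obstacle is ergodicity of $\mathbb{P}^\varepsilon$ for $\{P_t^\varepsilon\}$, since this cannot be read off from the generator by inspection. My plan is to argue this in two steps, following the Kozlov/Papanicolaou--Varadhan template: (i) the Gaussian field $\omega^\varepsilon$ has a spectral density that is nontrivial on an open set of $\mathbb{R}^2$, so the translation group $\{\tau_x\}_{x\in\mathbb{R}^2}$ acts in a mixing (hence ergodic) way on $(\Omega,\mathbb{P}^\varepsilon)$; (ii) any bounded Borel function $F$ on $\Omega$ satisfying $P_t^\varepsilon F=F$ for all $t\ge 0$ must, by using the martingale representation $F(\eta_t^\varepsilon)-F(\eta_0^\varepsilon)=\nu\sum_k\int_0^t D_kF(\eta_s^\varepsilon)dB_s^k$ combined with the pathwise transitivity of $X_t^\varepsilon$ over $\mathbb{R}^2$, be $\tau_x$-invariant in $\sigma$, and hence $\mathbb{P}^\varepsilon$-a.s. constant by step (i). Assembling these ingredients proves the lemma.
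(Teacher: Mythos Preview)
Your treatment of the generator via It\^o's formula, the Markov property, the invariance of $\mathbb{P}^\varepsilon$, and the symmetric/antisymmetric splitting is essentially the same as the paper's (the paper cites \cite[Proposition~9.7]{Komorowski.2012} for Markovianity rather than arguing it directly, but the generator computation and the use of $D_k^*=-D_k$ together with $\nabla\cdot\sigma=0$ are identical).

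The one genuine difference is the ergodicity argument. Your step~(ii) tries to pass from $P_t^\varepsilon F=F$ to $\tau_x$-invariance of $F$ via the martingale representation and an appeal to ``pathwise transitivity of $X_t^\varepsilon$ over $\mathbb{R}^2$''. That last phrase is vague and, as written, does not obviously yield $F(\tau_x\omega^\varepsilon)=F(\omega^\varepsilon)$ for \emph{all} $x$; moreover the martingale identity you invoke is only available for $F$ in the domain of $\mathcal{L}^\varepsilon$, not for an arbitrary bounded Borel $F$. The paper short-circuits this entirely with a one-line Dirichlet-form argument: if $\mathcal{L}^\varepsilon f=0$ then, using the antisymmetry of $\mathcal{A}^\varepsilon$ you already established,
\[
0=\langle f,\mathcal{L}^\varepsilon f\rangle_\varepsilon=\langle f,\mathcal{L}_0 f\rangle_\varepsilon=-\frac{\nu^2}{2}\sum_{k=1}^2\|D_k f\|_\varepsilon^2,
\]
so $Df=0$ directly. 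Both routes then finish with your step~(i), the ergodicity of $\{\tau_x\}$ on $(\Omega,\mathbb{P}^\varepsilon)$, which the paper leaves implicit in the sentence ``$Df=0$, which implies that $f$ is a constant function $\mathbb{P}^\varepsilon$-a.s.''. So your plan is not wrong, but you can replace the shaky step~(ii) by the Dirichlet-form identity above and keep step~(i) as the final ingredient.
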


\begin{proof}
According to \cite[Proposition 9.7]{Komorowski.2012}, $\{\eta_t^\varepsilon\}_{t\geq 0}$ is Markovian and its transition semigroup is given by
\begin{align*}
P_t^\varepsilon f(\sigma)=\int_{\mathbb{R}^2}p_t^{\varepsilon,\sigma}(0,x)f(\tau_x\sigma)dx,\quad \forall\sigma\in\Omega,
\end{align*}
where $f$ is bounded and measurable for $\mathbb{P}^\varepsilon$, $p_t^{\varepsilon,\sigma}(x,y)$ is the transition probability density of $X_t^{\varepsilon,\sigma}$, which is the solution to \eqref{def:Y}.
Then for $f\in H^{2,\varepsilon},$ $\sigma\in\Omega$, let $h^\sigma(x):=f(\tau_x\sigma)$, we have 
\begin{align*}
\mathcal{L}^\varepsilon f(\sigma)&=\lim_{t\to 0}\frac{P_t^\varepsilon f(\sigma)-f(\sigma)}{t}
=\lim_{t\to0}\int_{\mathbb{R}^2}\frac{p_t^{\varepsilon,\sigma}(0,x)}{t}(h^\sigma(x)-h^\sigma(0))dx\\
&=L^{\varepsilon,\sigma}h^\sigma(0),
\end{align*}
where $L^{\varepsilon,\sigma}$ is the generator of $\{X_t^{\varepsilon,\sigma}\}_t$. Therefore by It\^o's formula, 
\begin{align*}
\mathcal{L}^\varepsilon f(\sigma)&=\frac{\nu^2}{2}\Delta h^\sigma(0)+\frac{\hat{\lambda}}{\sqrt{\log\frac1\varepsilon}}\sigma(0)\cdot\nabla h^\sigma(0)
=\frac{\nu^2}{2}\sum_{k=1}^2 (D_k)^2 f(\sigma)+\mathcal{V}^\varepsilon(\sigma) \cdot Df(\sigma).
\end{align*}
In the last, we prove that $\mathbb{P}^\varepsilon$ is an invariant and ergodic measure for the semigroup $\{P_t^\varepsilon\}_{t\geq 0}$. Since $\omega^\varepsilon$ is a stationary Gaussian field, $\mathbb{P}^\varepsilon$ is invariant under transition group, i.e.$\mathbb{P}^\varepsilon\circ\tau_x=\mathbb{P}^\varepsilon$, for all $x\in\mathbb{R}^2.$ Then for $f,g\in H^{2,\varepsilon}$, $k=1,2$, the following integration by parts formula holds:
\begin{align}\label{bypart}
\mathbb{E}^\varepsilon(gD_kf)=-\mathbb{E}^\varepsilon(fD_kg).
\end{align}
Since every $\sigma\in\Omega$ is divergence-free, we have $\mathbb{E}^\varepsilon(\mathcal{L}^\varepsilon f)=0$ by \eqref{gen1}, which implies the invariance of $\mathbb{P}^\varepsilon$. As for the ergodicity, for any $f \in\mathcal{D}(\mathcal{L}^\varepsilon)$ that satisfies $\mathcal{L}^\varepsilon f=0$, $\mathbb{E}^\varepsilon(f\mathcal{L}^\varepsilon f)=0$. We conclude from \eqref{bypart} that $Df=0$, which implies that $f$ is a constant function $\mathbb{P}^\varepsilon-a.s.$ Hence $\mathbb{P}^\varepsilon$ is ergodic.
\end{proof}
}
From the above lemma, $\mathbb{P}^\varepsilon$ is an invariant measure for the semigroup $\{P_t^\varepsilon\}_{t\geq 0}$. This ensures that the following identity holds, as demonstrated in \cite[Lemma 5.1]{Cannizzaro.2021},
\begin{equation}\label{lap}
    \int_0^\infty e^{-\lambda t}\textbf{E} (N_{t}^{\varepsilon,1})^2dt
		=\frac{2}{\lambda^2}\mathbb{E}^\varepsilon(\mathcal{V}_1^\varepsilon(\lambda-\mathcal{L}^{\varepsilon})^{-1}\mathcal{V}_1^\varepsilon),
\end{equation}
where $\mathcal{V}^\varepsilon$ is defined in \eqref{def:V}, with its first component denoted as $\mathcal{V}_1^\varepsilon$. 

In the following, we will focus on the resolvent $(\lambda-\mathcal{L}^\varepsilon)^{-1}$. In order to describe how $\mathcal{L}^\varepsilon$ acts on elements in $L^2(\mathbb{P}^\varepsilon)$ more explicitly, we first focus on the space $L^2(\mathbb{P}^\varepsilon)$.
 
\subsection{Wiener chaos decomposition}
	
Since $\mathbb{P}^
\varepsilon$, the distribution of $\omega^\varepsilon$, is Gaussian, $L^2(\mathbb{P}^\varepsilon)$ admits a Wiener chaos decomposition. We recall it from \cite{Cannizzaro.2022}.

{For $x\in\mathbb{R}^2$, define $\pi^x$ be the random variable on $(\Omega,\mathbb{P}^\varepsilon)$: $\pi^x(\sigma):=\sigma(x)$, for all $\sigma\in \Omega.$ Denote $x_{1:n}$ as the vector $(x_1,\dots,x_n),$ for $x_1,\dots,x_n\in\mathbb{R}^2.$ Let $H_0^\varepsilon$ be the set of constant random variables, and $H_n^\varepsilon$ be the closure of the span of
\begin{equation}\label{H_n}
    \{\psi_n=\sum_{j_1,\dots,j_n=1}^{2}\int_{\mathbb{R}^{2n}}f_{j_{1:n}}(x_{1:n}):\prod_{l=1}^{n}\pi_{j_l}^{x_l}:dx_{1:n}\}
\end{equation}
for $n\geq 1$, where $:\dots:$ denotes the Wick product with respect to $\mathbb{P}^\varepsilon$, $\pi_{j_l}^{x_l}$ is the $j_l$-th component of $\pi^{x_l}$, and $f_{j_{1:n}}\in L_{sym}^2(\mathbb{R}^{2n})$ is symmetric on $x_{1:n}$.} Note that the wick product here depends on $\varepsilon$. To simplify the notation, we remove $\varepsilon$ in the Wick product if there is no confusion.
It is well known (see e.g. \cite[Theorem 1.1.1]{Nualart.2013}) that
\begin{equation}\label{chaos}
L^2(\mathbb{P}^\varepsilon)=\bigoplus_{n=0}^\infty H_n^\varepsilon.
\end{equation}
For any $\psi, \phi\in L^2(\mathbb{P}^\varepsilon),$ the $L^2(\mathbb{P}^\varepsilon)$ inner product can be given by $$\langle\psi,\phi\rangle_\varepsilon:=\mathbb{E}^\varepsilon(\psi\phi)=\sum_{n=0}^{\infty}\langle\psi_n,\phi_n\rangle_\varepsilon,$$ where 
$\psi_n$ and $\phi_n$ are the projections of $\psi$ and $\phi$ onto $H^\varepsilon_n$ respectively, and for $n\geq1$,
\begin{equation}\label{product}
    \langle\psi_n,\phi_n\rangle_\varepsilon=
		n!\int_{\mathbb{R}^{2n}}\prod_{i=1}^{n}
		\frac{\widehat{V}_{\varepsilon}(p_i)}{|p_i|^2}\overline{\hat{\psi}_n(p_{1:n})}\hat{\phi}_n(p_{1:n})dp_{1:n}.
\end{equation}
Here the Fourier transform of $\psi_n$ is defined as
\begin{equation}
\mathcal{F}(\psi_n)(p_{1:n})=\hat{\psi}_n(p_{1:n}):=(-\imath)^n\sum_{j_1,\dots,j_n=1}^{2}\prod_{k=1}^{n}p_{k,j_k}^{\perp}\hat{f}_{j_{1:n}}(p_{1:n}),
\end{equation}
where $(p_{k,1}^\perp,p_{k,2}^\perp):=(p_{k,2},-p_{k,1}),$ for $k=1,\dots,n.$ Indeed, $\hat{\psi}_n$ is the kernel of $\psi_n$ in Fourier space, and this mapping serves as an isometry from $H^\varepsilon_n$ to the Fock space $\Gamma L_n^{2,\varepsilon}$. The Fock space denotes $L^2_{sym}(\mathbb{R}^{2n},\mu_n^\varepsilon)$ equipped with the scalar product $\langle\cdot,\cdot\rangle_\varepsilon$ defined in \eqref{product}, where the measure $\mu_n^\varepsilon(dp_{1:n})=n!\displaystyle\prod_{i=1}^{n}
\frac{\widehat{V}_{\varepsilon}(p_i)}{|p_i|^2}dp_{1:n}$.
 
\begin{remark}\label{rem:V}
    Note that since $\omega^\varepsilon\in\Omega$ is smooth, $f_{j_{1:n}}$ can be extended to the distributions of negative regularity. In particular, for fixed $\varepsilon>0$, $$\mathcal{V}_1^\varepsilon=\frac{\hat{\lambda}}{\sqrt{\log\frac1\varepsilon}}\int_{\mathbb{R}^2}\delta_0(x)\pi_1^xdx \in H^\varepsilon_1,$$ and $$\hat{\mathcal{V}}_1^\varepsilon(p)=-\frac{\hat{\lambda}}{\sqrt{\log\frac1\varepsilon}}\imath p_2.$$
\end{remark}

Since $L^2(\mathbb{P}^\varepsilon)$ is isometric to $\Gamma L^{2,\varepsilon}=\oplus_{n=0}^\infty \Gamma L^{2,\varepsilon}_n$, by \eqref{gen1}, we can now describe how $\mathcal{L}^\varepsilon$ acts on the Fock space.

\subsection{The action of $\mathcal{L}^\varepsilon$ on the Fock space}
For every $\varepsilon>0$, the generator $\mathcal{L}^\varepsilon$ of $\eta^\varepsilon$ is given by \eqref{gen1}. Recall from \cite{Cannizzaro.2022}, $\mathcal{A}^\varepsilon$ can be decomposed further into two parts, denoted as $\mathcal{A}_{+}^\varepsilon$ and $\mathcal{A}_{-}^\varepsilon$, satisfying $(\mathcal{A}_{+}^\varepsilon)^{*}=-\mathcal{A}_{-}^\varepsilon$.
For every $\psi_n\in H_n^\varepsilon$, we have
\begin{equation}
    \begin{aligned}\label{gen}
		&\widehat{(-\mathcal{L}_0)\psi_n}(p_{1:n})=\frac{\nu^2}{2}\left|\sum_{i=1}^{n}p_i\right|^2\hat{\psi}_n(p_{1;n}),\\
		&\widehat{\mathcal{A}_{+}^\varepsilon\psi_n}(p_{1:n+1})=
			-\frac{1}{n+1}\frac{\hat{\lambda}}{\sqrt{\log\frac1\varepsilon}}\sum_{i=1}^{n+1}\left(p_i\times \sum_{j=1}^{n+1}p_j\right)\hat{\psi}_n(p_{1:n+1\backslash i}),\\
		&\widehat{\mathcal{A}_{-}^\varepsilon\psi_n}(p_{1:n-1})=n\frac{\hat{\lambda}}{\sqrt{\log\frac1\varepsilon}}\int_{\mathbb{R}^2}\frac{\widehat{V}_{\varepsilon}(p)}{|p|^2}
			\hat{\psi}_n(p_{1:n-1},p)\left(p\times\sum_{k=1}^{n-1}p_k\right)dp,
		\end{aligned}
\end{equation}
where $p_{1:n+1\backslash i}$ denotes $p_{1:n+1}$ with $p_i$ removed, $a\times b=|a||b|\sin(\theta)$, $\theta$ is the angle between a and b.

\begin{remark}
Note that $\mathcal{L}_0$ is different from the negative operator "Laplacian" $\Delta$ in $2d$ AKPZ case (see e.g. \cite{Cannizzaro.2023}), which is given by $$\widehat{\Delta\psi_n}(k_{1:n})=-\sum_{i=1}^{n}|k_i|^2\hat{\psi}_n(k_{1;n}),\quad k_i\in \mathbb{Z}^2\backslash\{0\},\;i=1.\dots,n.$$ Here the symmetry operator $-\mathcal{L}_0$ is positive semi-definite, not invertible. So we need to modify the method in \cite{Cannizzaro.2023}.
\end{remark}

The following is the lemma of It\^o's trick, which is an important tool in proving the tightness of $\{N^\varepsilon_{\cdot}\}_\varepsilon.$
\begin{lemma}\label{lem:Ito}
Let $\lambda>0,\;p\geq 1,\;T\geq 0$. For all $\varepsilon>0,\;n\in\mathbb{N}^{+}$ and $f\in \oplus_{k=0}^n H_k^\varepsilon$, the following estimate holds:
\begin{align}\label{eq:Ito}
\mathbf{E}\left(\sup_{0\leq t\leq T}\left|\int_{0}^{t}f(\eta_s^\varepsilon)ds\right|^p\right)^{\frac1p}
\lesssim_{n,p} (T^{\frac12}+T\lambda^\frac12)||(\lambda-\mathcal{L}_0)^{-\frac12}f||_{\varepsilon}.
\end{align}		
\end{lemma}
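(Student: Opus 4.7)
The plan is to apply a version of It\^o's trick adapted to the fact that, on the finite chaos $\oplus_{k=0}^n H_k^\varepsilon$, the symmetric part $\mathcal{L}_0$ of $\mathcal{L}^\varepsilon$ is negative semi-definite and $(\lambda-\mathcal{L}_0)^{-1}$ is a bounded self-adjoint operator preserving each chaos, even though the full generator $\mathcal{L}^\varepsilon$ is not readily invertible. I would set $u_\lambda := (\lambda-\mathcal{L}_0)^{-1}f$, which by the first line of \eqref{gen} remains in $\oplus_{k=0}^n H_k^\varepsilon$, and start from the pointwise identity
\begin{align*}
\int_0^t f(\eta_s^\varepsilon)\,ds = -\int_0^t \mathcal{L}_0 u_\lambda(\eta_s^\varepsilon)\,ds + \lambda\int_0^t u_\lambda(\eta_s^\varepsilon)\,ds,
\end{align*}
which follows directly from $(\lambda-\mathcal{L}_0)u_\lambda = f$.

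To handle the first term on the right, I would use the splitting $\mathcal{L}^\varepsilon = \mathcal{L}_0+\mathcal{A}^\varepsilon$ with adjoint $(\mathcal{L}^\varepsilon)^* = \mathcal{L}_0-\mathcal{A}^\varepsilon$ and apply Dynkin's formula both to the forward process $\eta^\varepsilon$ and to the time-reversed process $\hat\eta_\cdot^\varepsilon := \eta_{t-\cdot}^\varepsilon$ (whose generator under $\mathbb{P}^\varepsilon$ is $(\mathcal{L}^\varepsilon)^*$). Summing the two identities, the boundary terms $u_\lambda(\eta_0^\varepsilon),u_\lambda(\eta_t^\varepsilon)$ cancel and the antisymmetric contributions of $\mathcal{A}^\varepsilon$ drop out, yielding
\begin{align*}
-\int_0^t \mathcal{L}_0 u_\lambda(\eta_s^\varepsilon)\,ds = \frac12\bigl(M_t(u_\lambda)+\hat M_t(u_\lambda)\bigr),
\end{align*}
where $M(u_\lambda)$ is the forward Dynkin martingale and $\hat M(u_\lambda)$ its backward analogue on $[0,t]$. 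Both have quadratic variation $\int_0^\cdot \nu^2|Du_\lambda(\eta_s^\varepsilon)|^2\,ds$, because only $\mathcal{L}_0$ --- the pure diffusion part in \eqref{gen1} --- contributes to the carr\'e du champ.

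It then remains to estimate each piece in $L^p$. For the martingales I would invoke Burkholder--Davis--Gundy to pass to the quadratic variation, use stationarity of $\mathbb{P}^\varepsilon$ to turn the time integral into $T\cdot\mathbb{E}^\varepsilon$, and apply the integration by parts \eqref{bypart} together with the spectral identity
\begin{align*}
\nu^2\mathbb{E}^\varepsilon|Du_\lambda|^2 = 2\langle u_\lambda,(-\mathcal{L}_0)u_\lambda\rangle_\varepsilon \leq 2||(\lambda-\mathcal{L}_0)^{-\frac12}f||_\varepsilon^2
\end{align*}
to produce the $T^{\frac12}$ prefactor. Since $u_\lambda$ stays in $\oplus_{k=0}^n H_k^\varepsilon$, the square field $|Du_\lambda|^2$ lies in a chaos of degree at most $2n$, so Gaussian hypercontractivity converts its $L^{p/2}$ norm into its $L^1$ norm up to a constant depending only on $n$ and $p$. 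The residual term $\lambda\int_0^t u_\lambda(\eta_s^\varepsilon)\,ds$ is controlled by Minkowski, stationarity, hypercontractivity, and the bound $||u_\lambda||_{L^2(\mathbb{P}^\varepsilon)}^2 \leq \lambda^{-1}||(\lambda-\mathcal{L}_0)^{-\frac12}f||_\varepsilon^2$, furnishing the $T\sqrt{\lambda}$ contribution.

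The main technical point to be careful about is the backward martingale $\hat M(u_\lambda)$: one must verify that on each window $[0,t]$ it is genuinely an $L^2$-martingale in the reversed filtration with the prescribed bracket, and that its $L^p$-supremum over $t\in[0,T]$ can be controlled uniformly in $t$. I would address this by running the backward process once on the fixed full window $[0,T]$ and applying Doob's maximal inequality there, so that a single BDG bound with quadratic variation $\nu^2\int_0^T|Du_\lambda(\eta_s^\varepsilon)|^2\,ds$ covers all $t\leq T$ simultaneously. Assembling the two contributions yields the bound $(T^{\frac12}+T\lambda^{\frac12})||(\lambda-\mathcal{L}_0)^{-\frac12}f||_\varepsilon$ claimed in \eqref{eq:Ito}.
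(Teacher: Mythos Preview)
Your proposal is correct and follows essentially the same route as the paper: set $u_\lambda=(\lambda-\mathcal{L}_0)^{-1}f$, use the forward/backward Dynkin martingale decomposition on the fixed window $[0,T]$ to write $-2\int_0^t\mathcal{L}_0 u_\lambda(\eta_s^\varepsilon)\,ds$ as a combination of martingale increments with bracket $\nu^2\int_0^\cdot|Du_\lambda|^2\,ds$, then apply BDG, stationarity, and Gaussian hypercontractivity on the finite chaos, and treat the residual $\lambda\int_0^t u_\lambda\,ds$ via $\|u_\lambda\|_\varepsilon\le\lambda^{-1/2}\|(\lambda-\mathcal{L}_0)^{-1/2}f\|_\varepsilon$. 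The paper's only cosmetic difference is that it fixes the reversal window at $[0,T]$ from the outset, writing the decomposition as $\frac12(M_t+M_T^{-}-M_{T-t}^{-})$, which is exactly the resolution you propose in your final paragraph.
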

\begin{proof}
For $\varepsilon>0$, let $\mathcal{F}_t:=\sigma\{\eta_s^\varepsilon,s\leq t\}$, the natural filtration generated by $\{\eta_t^\varepsilon\}_{t\geq 0}$. For $f\in \oplus_{k=0}^n H_k^\varepsilon$, define $h:=(\lambda-\mathcal{L}_0)^{-1}f$. Since $ h\in \oplus_{k=0}^n H_k^\varepsilon$ belongs to the domain of $\mathcal{L}^\varepsilon$, we can define an $\mathcal{F}_t$-martingale $M_t$,
\begin{align*}
M_t:=h(\eta_t^\varepsilon)-h(\eta_0^\varepsilon)-\int_0^t\mathcal{L}^\varepsilon h(\eta_s^\varepsilon)ds.
\end{align*}
By It\^o's formula and \eqref{gen1}, $M_t$ and its quadratic variation can be explicitly given by \begin{align}\label{eq:mart}
M_t=\nu\int_0^t Dh(\eta_{s}^\varepsilon)\cdot dB_s,\quad \langle M\rangle_t=\nu^2\sum_{k=1}^{2}\int_0^t (D_kh(\eta_{s}^\varepsilon))^2ds.
\end{align}
Similarly, denote $\{\mathcal{F}_t^{-}:t\in [0,T]\}$ for the backward filtration generated by $\{\eta_{T-t}^\varepsilon\}_{t\in [0,T]}$. Then the generator of this process is the adjoint of $\mathcal{L}^\varepsilon$ w.r.t $\mathbb{P}^\varepsilon$, i.e. $(\mathcal{L}^{\varepsilon})^{*}=\mathcal{L}_0-\mathcal{A}^\varepsilon_{+}-\mathcal{A}^\varepsilon_{-}$, and $h$ is also in the domain of $(\mathcal{L}^\varepsilon)^{*}$. Thus the process $\{M_t^{-}:0\leq t\leq T\}$ defined by
\begin{align*}
M_t^{-}:=h(\eta_{T-t}^\varepsilon)-h(\eta_T^\varepsilon)-\int_0^t(\mathcal{L}^{\varepsilon})^{*}h(\eta_{T-s}^\varepsilon)ds
\end{align*}
is an $\mathcal{F}_t^{-}$-martingale, and there holds 
\begin{align*}
M_t+M_T^{-}-M_{T-t}^{-}=-2\int_0^t\mathcal{L}_0h(\eta_s^\varepsilon)ds.
\end{align*}
Therefore by the definition of $h$, we have
\begin{align}\label{eq:3.12}
\int_0^tf(\eta_s^\varepsilon)ds=\frac12(M_t+M_T^{-}-M_{T-t}^{-})+\lambda\int_0^t h(\eta_s^\varepsilon)ds.
\end{align}
By Burkholder-Davis-Gundy inequality \cite[Chap4 Corollary 4.2]{Revuz.1999} and \eqref{eq:mart}, 
\begin{align}
\textbf{E}[\sup_{t\in[0,T]}|M_t|^p]&\lesssim_p \textbf{E}[\langle M\rangle_T^{\frac p2}]=\nu^{p}\textbf{E}\left[\left(\int_0^T \left(\sum_{k=1}^{2}(D_kh(\eta_{s}^\varepsilon))^2\right)ds\right)^{\frac p2}\right]\nonumber\\
&\lesssim_p \left(\int_0^T \textbf{E}\left[\left(\sum_{k=1}^{2}(D_kh(\eta_{s}^\varepsilon))^2\right)^{\frac p2}\right]^{\frac2p}ds\right)^{\frac p2}\label{eq:bdg}.
\end{align}
By the stationarity and Gaussian Hypercontractivity of Gaussian measure \cite[Theorem 5.10]{Janson.1997}, the right hand side of \eqref{eq:bdg} is equal to
$$T^{\frac p2}\mathbb{E}^\varepsilon\left[\left(\sum_{k=1}^{2}(D_kh)^2\right)^{\frac p2}\right]\lesssim_{n} T^{\frac p2}\mathbb{E}^\varepsilon\left[\left(\sum_{k=1}^{2}(D_kh)^2\right)\right]^{\frac p2}=T^{\frac p2}\langle-\mathcal{L}_0h,h\rangle_\varepsilon^\frac p2\leq T^{\frac p2}\langle(\lambda-\mathcal{L}_0)^{-1}f,f\rangle_\varepsilon^\frac p2.$$
Similarly, we can get the same uniform bound for the $L^p$ norm of $M_t^{-}$. Finally we turn to $\lambda\int_0^th(\eta_s^\varepsilon)ds$ in \eqref{eq:3.12}. Similarly as before, there holds $$\textbf{E}\left[\sup_{t\in[0,T]}\left|\lambda\int_0^th(\eta_s^\varepsilon)ds\right|^p\right]\lesssim_{n,p} \lambda^pT^p||h||_\varepsilon^p\leq \lambda^{\frac p2}T^p\langle(\lambda-\mathcal{L}_0)^{-1}f,f\rangle_\varepsilon^\frac p2.$$ So the proof is concluded by \eqref{eq:3.12}.
\end{proof}
\begin{remark}
 According to \cite[Lemma 2.4]{Komorowski.2012}, for $\varepsilon>0, T>0$, if $f\in L^2(\mathbb{P}^\varepsilon)$ satisfies $||f||^2_{-1,\varepsilon}:=\lim_{\lambda\to 0}\langle f,(\lambda-\mathcal{L}_0)^{-1}f\rangle_{\varepsilon}<\infty$, then we have
\begin{equation}\label{eq:Ito'}
    \textbf{E}\left(\sup_{0\leq t\leq T}\left|\int_{0}^{t}f(\eta_s^\varepsilon)ds\right|^2\right)\lesssim T||f||^2_{-1,\varepsilon}.
\end{equation}
However, since $-\mathcal{L}_0$ is not invertible, we mainly use \eqref{eq:Ito} below instead of \eqref{eq:Ito'}.
\end{remark}

To approximate $(\lambda-\mathcal{L}^\varepsilon)$ in an interative way as we will introduce in the next section, we will need to treat the terms as $\langle\mathcal{A}_{+}^{\varepsilon}\psi,(\lambda-\mathcal{L}_0)^{-1}\mathcal{A}_{+}^{\varepsilon}\psi\rangle_{\varepsilon}$ frequently, for $\psi\in L^2(\mathbb{P}^\varepsilon)$.
To this purpose, we introduce the following decomposition.
\begin{lemma}{\cite[Lemma 4.4]{Cannizzaro.2022}}\label{lem:diag}
		Suppose $\mathcal{S}$ is a diagonal operator with Fourier multiplier $\sigma=\{\sigma_n:\mathbb{R}^{2n}\to\mathbb{R},n\in\mathbb{N}^{+}\}$, i.e. for all $\psi\in H_n^\varepsilon$, $\widehat{\mathcal{S}\psi}(p_{1:n})=\sigma_n(p_{1:n})\hat{\psi}(p_{1:n})$.
		Then it holds that
		$$\langle\psi,(\mathcal{A}_{+}^{\varepsilon})^{*}\mathcal{S}\mathcal{A}_{+}^{\varepsilon}\psi\rangle_{\varepsilon}
		=\langle\psi,(\mathcal{A}_{+}^{\varepsilon})^{*}\mathcal{S}\mathcal{A}_{+}^{\varepsilon}\psi\rangle_{\varepsilon,diag}
		+\langle\psi,(\mathcal{A}_{+}^{\varepsilon})^{*}\mathcal{S}\mathcal{A}_{+}^{\varepsilon}\psi\rangle_{\varepsilon,off},$$
		where
		\begin{equation}
			\langle\psi,(\mathcal{A}_{+}^{\varepsilon})^{*}\mathcal{S}\mathcal{A}_{+}^{\varepsilon}\psi\rangle_{\varepsilon,diag}
			=n!\frac{\hat{\lambda}^2}{\log\frac{1}{\varepsilon}}\int_{\mathbb{R}^{2(n+1)}}\prod_{k=1}^{n+1}\frac{\widehat{V}_\varepsilon(p_k)}{|p_k|^2}\left(p_{n+1}\times\sum_{i=1}^{n}p_i\right)^2|\hat{\psi}(p_{1:n})|^2\sigma_{n+1}(p_{1:n+1})dp_{1:n+1},
		\end{equation}
		and
		\begin{equation}
			\begin{aligned}
				\langle\psi,(\mathcal{A}_{+}^{\varepsilon})^{*}\mathcal{S}\mathcal{A}_{+}^{\varepsilon}\psi\rangle_{\varepsilon,off}
				=n!n\frac{\hat{\lambda}^2}{\log\frac{1}{\varepsilon}}\int_{\mathbb{R}^{2(n+1)}}\prod_{k=1}^{n+1}\frac{\widehat{V}_\varepsilon(p_k)}{|p_k|^2}\left(p_{n+1}\times\sum_{i=1}^{n}p_i\right)\left(p_{n}\times\sum_{i=1}^{n+1}p_i\right)\times\\
				\overline{\hat{\psi}(p_{1:n})}\hat{\psi}(p_{1:n+1\backslash n})\sigma_{n+1}(p_{1:n+1})dp_{1:n+1}.
			\end{aligned}
		\end{equation}
\end{lemma}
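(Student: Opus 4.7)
The plan is to unfold $\langle\psi,(\mathcal{A}_{+}^{\varepsilon})^{*}\mathcal{S}\mathcal{A}_{+}^{\varepsilon}\psi\rangle_{\varepsilon} = \langle \mathcal{A}_{+}^{\varepsilon}\psi, \mathcal{S}\mathcal{A}_{+}^{\varepsilon}\psi\rangle_\varepsilon$ at chaos level $n+1$ via the Fock-space inner product \eqref{product}, substitute the explicit kernel of $\mathcal{A}_+^\varepsilon \psi$ given in \eqref{gen}, and then exploit permutation symmetry to collapse the resulting double sum into just two representative terms.

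Concretely, for $\psi \in H_n^\varepsilon$ the formula \eqref{gen} gives
\begin{equation*}
\widehat{\mathcal{A}_+^\varepsilon \psi}(p_{1:n+1}) = -\frac{1}{n+1}\frac{\hat\lambda}{\sqrt{\log\tfrac{1}{\varepsilon}}}\sum_{i=1}^{n+1}\Bigl(p_i \times \sum_{j=1}^{n+1} p_j\Bigr)\,\hat\psi(p_{1:n+1\setminus i}),
\end{equation*}
and then $\widehat{\mathcal{S}\mathcal{A}_+^\varepsilon \psi}(p_{1:n+1}) = \sigma_{n+1}(p_{1:n+1})\,\widehat{\mathcal{A}_+^\varepsilon \psi}(p_{1:n+1})$. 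Plugging into \eqref{product} at level $n+1$ produces
\begin{equation*}
\frac{(n+1)!}{(n+1)^2}\,\frac{\hat\lambda^2}{\log\tfrac1\varepsilon}\!\sum_{i,k=1}^{n+1}\!\int_{\mathbb{R}^{2(n+1)}}\!\!\prod_{l=1}^{n+1}\!\frac{\widehat V_\varepsilon(p_l)}{|p_l|^2}\Bigl(p_i \!\times\! \sum_j p_j\Bigr)\Bigl(p_k\!\times\!\sum_j p_j\Bigr)\overline{\hat\psi(p_{1:n+1\setminus i})}\hat\psi(p_{1:n+1\setminus k})\sigma_{n+1}\,dp_{1:n+1}.
\end{equation*}
I would then split the double sum according to whether $i=k$ (the diagonal contribution) or $i\neq k$ (the off-diagonal contribution).

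For the diagonal part, the measure $\prod_l \widehat V_\varepsilon(p_l)/|p_l|^2\,dp_{1:n+1}$, the multiplier $\sigma_{n+1}$, and the cross-product factor are all symmetric under permutations of $p_{1:n+1}$, while the kernel $\hat\psi(p_{1:n+1\setminus i})$ is symmetric in its remaining $n$ arguments. Therefore, swapping $p_i \leftrightarrow p_{n+1}$ shows that each of the $n+1$ diagonal terms equals the one with $i=k=n+1$; combining the combinatorial prefactor $(n+1)!/(n+1)^2$ with the count $n+1$ yields precisely the coefficient $n!$ appearing in the stated formula for $\langle\psi,(\mathcal{A}_{+}^{\varepsilon})^{*}\mathcal{S}\mathcal{A}_{+}^{\varepsilon}\psi\rangle_{\varepsilon,diag}$. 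The off-diagonal part is handled in the same way: by simultaneously relabeling $p_i \leftrightarrow p_{n+1}$ and $p_k \leftrightarrow p_n$, each of the $n(n+1)$ pairs $(i,k)$ with $i\neq k$ contributes the same integral as the representative pair $(n+1,n)$, and the prefactor $(n+1)!/(n+1)^2 \cdot n(n+1) = n\cdot n!$ matches the formula for $\langle\psi,(\mathcal{A}_{+}^{\varepsilon})^{*}\mathcal{S}\mathcal{A}_{+}^{\varepsilon}\psi\rangle_{\varepsilon,off}$.

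The only subtle point, and hence the step requiring the most care, is justifying the relabelings when the kernels $\hat\psi(p_{1:n+1\setminus i})$ and $\hat\psi(p_{1:n+1\setminus k})$ have "holes" at different positions. The symmetry of $\hat\psi \in L^2_{sym}$ means that only the set of remaining indices matters, not their order, so after swapping variables in the ambient integral one may freely relabel the hole as being at position $n+1$ (respectively $n$) without changing the value. Once this is checked, the computation is purely combinatorial and the two formulas drop out immediately.
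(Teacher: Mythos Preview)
Your argument is correct and is precisely the standard route: expand $\langle\mathcal{A}_+^\varepsilon\psi,\mathcal{S}\mathcal{A}_+^\varepsilon\psi\rangle_\varepsilon$ via \eqref{product} and \eqref{gen}, split the double sum over $(i,k)$ into $i=k$ and $i\neq k$, and use the permutation symmetry of the measure, of $\sum_j p_j$, and of $\sigma_{n+1}$ (implicit here, but satisfied for all diagonal operators appearing in the paper, which are functions of $\lambda-\mathcal{L}_0$) together with the symmetry of $\hat\psi$ to reduce to the representative terms. The paper does not give its own proof of this lemma; it is quoted from \cite[Lemma~4.4]{Cannizzaro.2022}, and your computation reproduces that result.
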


In order to state the following lemma, we introduce the so-called number operator.
\begin{definition}\label{dfn:num}
The number operator $\mathcal{N}:L^2(\mathbb{P}^\varepsilon)\to L^2(\mathbb{P}^\varepsilon)$ is defined in the way that 
$\mathcal{N}\psi=n\psi$ for $\psi\in H_n^\varepsilon$.
\end{definition}
	
By Lemma \ref{lem:diag}, for $\mathcal{S}=(\lambda-\mathcal{L}_0)^{-1}$, we can obtain the following modification of graded sector condition in \cite[Section 2.7]{Komorowski.2012}.
	
\begin{lemma}\label{lem:estimate}
Let $\mathcal{N}$ be the number operator in Definition \ref{dfn:num}. Then for each $\lambda> 0,\;\varepsilon\in(0,\frac12)$, and $\varphi\in L^2(\mathbb{P}^\varepsilon)$, one has
		\begin{align}
			&||(\lambda-\mathcal{L}_0)^{-\frac12}\mathcal{A}_{+}^{\varepsilon}\varphi||_{\varepsilon}\lesssim ||\sqrt{\mathcal{N}}(-\mathcal{L}_0)^{\frac12}\varphi||_{\varepsilon}, \label{ineq:A+}\\
			&||(\lambda-\mathcal{L}_0)^{-\frac12}\mathcal{A}_{-}^{\varepsilon}\varphi||_{\varepsilon}\lesssim\frac{1}{\lambda}||\sqrt{\mathcal{N}}(-\mathcal{L}_0)^{\frac12}\varphi||_{\varepsilon}, \label{ineq:A-}\\
			&||(\lambda-\mathcal{L}_0)^{-1}\mathcal{A}_{+}^{\varepsilon}\varphi||_{\varepsilon}\lesssim||\sqrt{\mathcal{N}}\varphi||_{\varepsilon}\label{ineq:A+'}.
        \end{align}
\end{lemma}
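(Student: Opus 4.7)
The plan is to reduce each of the three bounds to the diagonal/off-diagonal decomposition provided by Lemma \ref{lem:diag}, applied with a suitable diagonal operator $\mathcal{S}$. For \eqref{ineq:A+} I would write $\|(\lambda-\mathcal{L}_0)^{-1/2}\mathcal{A}_+^\varepsilon\varphi\|_\varepsilon^2 = \langle\varphi, (\mathcal{A}_+^\varepsilon)^*(\lambda-\mathcal{L}_0)^{-1}\mathcal{A}_+^\varepsilon\varphi\rangle_\varepsilon$ and expand $\varphi$ along the Wiener chaos decomposition $\bigoplus_n H_n^\varepsilon$; Lemma \ref{lem:diag} then applies with diagonal symbol $\sigma_{n+1}(p_{1:n+1}) = (\lambda+\tfrac{\nu^2}{2}|\sum_{i=1}^{n+1}p_i|^2)^{-1}$. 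The same scheme works for \eqref{ineq:A+'}, now with $\sigma_{n+1} = (\lambda+\tfrac{\nu^2}{2}|\sum_{i=1}^{n+1}p_i|^2)^{-2}$.

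The key algebraic observation for bounding the diagonal term is that $p_{n+1}\times\sum_{i=1}^n p_i = p_{n+1}\times\sum_{i=1}^{n+1}p_i$ (because $p_{n+1}\times p_{n+1}=0$), so that the Cauchy-Schwarz bound $|p_{n+1}\times\sum_{i=1}^{n+1}p_i|^2\leq |p_{n+1}|^2|\sum_{i=1}^{n+1}p_i|^2$ produces a factor that cancels the $1/|p_{n+1}|^2$ weight in the Fock measure. After this cancellation the remaining integration in $p_{n+1}$ takes the form $\int \widehat{V}_\varepsilon(q)(\lambda+\tfrac{\nu^2}{2}|q+P|^2)^{-s}\,dq$ with $P = \sum_{i=1}^n p_i$, and the compact-Fourier-support assumption on $\rho$ (Assumption \ref{rho}) yields $\lesssim \log(1/\varepsilon)$ for $s=1$ and $\lesssim 1$ for $s=2$, both uniformly in $P\in\mathbb{R}^2$. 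The $\log(1/\varepsilon)$ obtained in the case $s=1$ cancels precisely the $1/\log(1/\varepsilon)$ prefactor produced by the weak-coupling scaling in \eqref{gen}, and reassembling the remaining factors recovers the right-hand sides of \eqref{ineq:A+} and \eqref{ineq:A+'} respectively.

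For the off-diagonal term I would symmetrise the integrand in $p_n$ and $p_{n+1}$, use the exchange-symmetry of $\hat{\psi}$ and $\sigma_{n+1}$, and apply an AM-GM step to dominate the product of the two cross products $(p_{n+1}\times\sum_{i=1}^n p_i)(p_n\times\sum_{i=1}^{n+1}p_i)$ by a symmetric sum of squares. After the relabelling $p_n\leftrightarrow p_{n+1}$ this reduces the off-diagonal to the same type of integral as the diagonal, up to an extra factor of $n$ that is absorbed by the $\sqrt{\mathcal{N}}$ factor on the right-hand side.

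For \eqref{ineq:A-} I would start from the spectral bound $\|(\lambda-\mathcal{L}_0)^{-1/2}\mathcal{A}_-^\varepsilon\varphi\|_\varepsilon^2\leq \lambda^{-1}\|\mathcal{A}_-^\varepsilon\varphi\|_\varepsilon^2$, then apply a weighted Cauchy-Schwarz to the Fock-space representation of $\widehat{\mathcal{A}_-^\varepsilon\psi_n}$ from \eqref{gen} with weight $(\lambda+\tfrac{\nu^2}{2}|\sum_{i=1}^n p_i|^2)^{-1}$. The same $\log(1/\varepsilon)$ integration estimate reappears and compensates the $1/\log(1/\varepsilon)$ prefactor; the elementary bound $|P|^2/(\lambda+\tfrac{\nu^2}{2}|P|^2)\leq 2/\nu^2$ absorbs the factor $|\sum_{k=1}^{n-1}p_k|^2$ created by the estimate $(p\times\sum_{k=1}^{n-1}p_k)^2\leq |p|^2|\sum_{k=1}^{n-1}p_k|^2$, and the resulting integrand matches $\|\sqrt{\mathcal{N}}(-\mathcal{L}_0)^{1/2}\varphi\|_\varepsilon^2$ up to constants. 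Combined with the $\lambda^{-1}$ from the spectral bound this yields the claimed $\lambda^{-1}$ prefactor. I expect the main obstacle to be the off-diagonal estimate in the diagonal decomposition, where the exchange symmetry in $(p_n,p_{n+1})$ must be used carefully in conjunction with the weighted Cauchy-Schwarz in order not to introduce any extra $\varepsilon$- or $n$-dependence beyond the single $\log(1/\varepsilon)$ that is cancelled by the weak-coupling prefactor.
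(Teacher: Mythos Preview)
Your overall strategy — chaos decomposition, Lemma~\ref{lem:diag} with the appropriate diagonal symbol, then an explicit $q$-integral estimate using the compact Fourier support of $\rho$ — is exactly what the paper does. The off-diagonal treatment (symmetrise and reduce to the diagonal at the cost of a factor~$n$, absorbed by $\sqrt{\mathcal N}$) also matches. However, your ``key algebraic observation'' is misapplied in the case of \eqref{ineq:A+} and creates a genuine gap.

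For \eqref{ineq:A+} you propose to rewrite $p_{n+1}\times\sum_{i=1}^n p_i=p_{n+1}\times\sum_{i=1}^{n+1}p_i$ and then bound $|p_{n+1}\times\sum_{i=1}^{n+1}p_i|^2\le |p_{n+1}|^2\,|\sum_{i=1}^{n+1}p_i|^2$. After the cancellation of $|p_{n+1}|^2$ against the Fock weight $1/|p_{n+1}|^2$, this leaves an extra factor $|q+P|^2$ in the numerator (with $q=p_{n+1}$, $P=\sum_{i=1}^n p_i$), so the $q$-integral is
\[
\int \widehat V_\varepsilon(q)\,\frac{|q+P|^2}{\lambda+\tfrac{\nu^2}{2}|q+P|^2}\,dq
\ \le\ \frac{2}{\nu^2}\int \widehat V_\varepsilon(q)\,dq
\ \sim\ \varepsilon^{-2},
\]
not $\log(1/\varepsilon)$; moreover you have discarded the factor $|P|^2=|\sum_{i=1}^n p_i|^2$ which is exactly what is needed to reconstitute $\|(-\mathcal L_0)^{1/2}\varphi_n\|_\varepsilon^2$ on the right-hand side. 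The paper avoids this by \emph{not} passing to $\sum_{i=1}^{n+1}p_i$ here: it writes $(p_{n+1}\times P)^2=|p_{n+1}|^2|P|^2\sin^2\theta$, keeps the $|P|^2$ intact to produce $\|(-\mathcal L_0)^{1/2}\varphi_n\|_\varepsilon^2$, and bounds the remaining $q$-integral $\int\widehat V_\varepsilon(q)\sin^2\theta\,(\lambda+\tfrac{\nu^2}{2}|q+P|^2)^{-1}dq$ by the logarithm as in \eqref{3.15}.

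Your identity \emph{is} the right tool for \eqref{ineq:A+'}: there the right-hand side carries no $(-\mathcal L_0)^{1/2}$, so no $|P|^2$ is needed, and the bound $(p_{n+1}\times P)^2\le |p_{n+1}|^2|q+P|^2$ combined with $\sigma_{n+1}=(\lambda+\tfrac{\nu^2}{2}|q+P|^2)^{-2}$ reduces the $q$-integral to the $s=1$ form, yielding $\log(1/\varepsilon)$ (not ``$\lesssim 1$ for $s=2$'' as you wrote). This is precisely the paper's step $\sin^2\theta\le |q+P|^2/|P|^2$.

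For \eqref{ineq:A-} your route (crude spectral bound then weighted Cauchy--Schwarz) and the paper's route (keep the resolvent factor $(\lambda+\tfrac{\nu^2}{2}|\sum_{i=1}^{n-1}p_i|^2)^{-1}$, apply Cauchy--Schwarz in the contracted variable, then bound that factor by~$1/\lambda$) are essentially interchangeable; both reduce to the same pointwise estimate $\widehat V_\varepsilon(p_n)\sin^2\theta\,(\lambda+\tfrac{\nu^2}{2}|\sum_{i=1}^{n-1}p_i|^2)^{-1}\lesssim 1/\lambda$ with the $|\sum_{k=1}^n p_k|^2$ factor furnishing $\|(-\mathcal L_0)^{1/2}\varphi_n\|_\varepsilon^2$.
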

	
	\begin{proof}
		By Wiener chaos decomposition \eqref{chaos}, for $n\in\mathbb{N}$, there exists $\varphi_n\in H_n^{\varepsilon}$ such that $\varphi=\sum_{n=0}^{\infty}\varphi_n$.
		Then the square of the left hand side of \eqref{ineq:A+} is
		$$||(\lambda-\mathcal{L}_0)^{-\frac12}\mathcal{A}_{+}^{\varepsilon}\varphi||_{\varepsilon}^2
		=\langle(\lambda-\mathcal{L}_0)^{-1}\mathcal{A}_{+}^{\varepsilon}\varphi,\mathcal{A}_{+}^{\varepsilon}\varphi\rangle_{\varepsilon}
		=\sum_n\langle(\lambda-\mathcal{L}_0)^{-1}\mathcal{A}_{+}^{\varepsilon}\varphi_n,\mathcal{A}_{+}^{\varepsilon}\varphi_n\rangle_{\varepsilon}.$$ Thus we only need to estimate the term $\langle(\lambda-\mathcal{L}_0)^{-1}\mathcal{A}_{+}^{\varepsilon}\varphi_n,\mathcal{A}_{+}^{\varepsilon}\varphi_n\rangle_{\varepsilon}$ for each $n$.
	By Lemma \ref{lem:diag}, we decompose the inner product as diagonal and off-diagonal terms. Since $\hat{\varphi}_n(p_{1:n})$ is symmetric in $p_{1:n}$, using Cauchy-Schwarz inequality, we have
		$$\langle(\lambda-\mathcal{L}_0)^{-1}\mathcal{A}_{+}^{\varepsilon}\varphi_n,\mathcal{A}_{+}^{\varepsilon}\varphi_n\rangle_{\varepsilon,off}\leq n\langle(\lambda-\mathcal{L}_0)^{-1}\mathcal{A}_{+}^{\varepsilon}\varphi_n,\mathcal{A}_{+}^{\varepsilon}\varphi_n\rangle_{\varepsilon,diag}.$$
While for the diagonal term,
            \begin{align*}
			&\langle(\lambda-\mathcal{L}_0)^{-1}\mathcal{A}_{+}^{\varepsilon}\varphi_n,\mathcal{A}_{+}^{\varepsilon}\varphi_n\rangle_{\varepsilon,diag}\\
			=&n!\int_{\mathbb{R}^{2n}}\prod_{k=1}^{n}\frac{\widehat{V}_\varepsilon(p_k)}{|p_k|^2}|\sum_{i=1}^{n}p_i|^2|\hat{\varphi}_n(p_{1:n})|^2dp_{1:n}\left(\frac{\hat{\lambda}^2}{\log\frac{1}{\varepsilon}}\int_{\mathbb{R}^2}\frac{\widehat{V}_\varepsilon(q)(\sin\theta)^2}{\lambda+\frac{\nu^2}{2}|\sum_{i=1}^{n}p_i+q|^2}dq\right)\\
		\leq& \frac{2}{\nu^2}||(-\mathcal{L}_0)^{\frac12}\varphi_n||_{\varepsilon}^2\sup_{p_{1:n}}\left(\frac{\hat{\lambda}^2}{\log\frac{1}{\varepsilon}}\int_{\mathbb{R}^2}\frac{\widehat{V}_\varepsilon(q)(\sin\theta)^2}{\lambda+\frac{\nu^2}{2}|\sum_{i=1}^{n}p_i+q|^2}dq\right),
		\end{align*}
		where $\theta$ is the angle between $q$ and $\sum_{i=1}^{n}p_i$. Furthermore, since $\widehat{V}_{\varepsilon}(q)= \widehat{V}(\varepsilon q)$ for all $q\in\mathbb{R}^2$, the term in the above supremum can be bounded by
		\begin{equation}
			\frac{\hat{\lambda}^2}{\log\frac{1}{\varepsilon}}\int_{\mathbb{R}^2}\frac{\widehat{V}_\varepsilon(q)(\sin\theta)^2}{\lambda+\frac{\nu^2}{2}|\sum_{i=1}^{n}p_i+q|^2}dq
			\lesssim
			\frac{1}{\log\frac{1}{\varepsilon}}\int_{\mathbb{R}^2}\frac{\widehat{V}(q)}{\varepsilon^2\lambda+\frac{\nu^2}{2}|\varepsilon\sum_{i=1}^n p_i+q|^2}dq\lesssim
			\frac{\log(1+\frac{\nu^2}{2\varepsilon^2\lambda})}{\nu^2\log\frac{1}{\varepsilon}}\lesssim 1.
			\label{3.15}
		\end{equation}
		Therefore we obtain that 
  $$||(\lambda-\mathcal{L}_0)^{-\frac12}\mathcal{A}_{+}^{\varepsilon}\varphi_n||_{\varepsilon}^2\leq(1+n)\langle(\lambda-\mathcal{L}_0)^{-1}\mathcal{A}_{+}^{\varepsilon}\varphi_n,\mathcal{A}_{+}^{\varepsilon}\varphi_n\rangle_{\varepsilon,diag}
		\lesssim||\sqrt{\mathcal{N}}(-\mathcal{L}_0)^{\frac12}\varphi_n||_{\varepsilon}^2.$$

		For \eqref{ineq:A+'}, similarly we have	
	\begin{align*}
		  &\langle(\lambda-\mathcal{L}_0)^{-1}\mathcal{A}_{+}^{\varepsilon}\varphi_n,(\lambda-\mathcal{L}_0)^{-1}\mathcal{A}_{+}^{\varepsilon}\varphi_n\rangle_{\varepsilon,diag}\\
            =&n!\int_{\mathbb{R}^{2n}}\prod_{k=1}^{n}\frac{\widehat{V}_\varepsilon(p_k)}{|p_k|^2}|\hat{\varphi}_n|^2dp_{1:n}\left(\frac{\hat{\lambda}^2}{\log\frac{1}{\varepsilon}}\int_{\mathbb{R}^2}\widehat{V}_\varepsilon(q)(\sin\theta)^2\frac{|\sum_{i=1}^{n}p_i|^2}{(\lambda+\frac{\nu^2}{2}|\sum_{i=1}^{n}p_i+q|^2)^2}dq\right)\\
		  \lesssim& \frac{1}{\log\frac{1}{\varepsilon}}||\varphi_n||_{\varepsilon}^2\sup_{p_{1:n}}\left(\int_{\mathbb{R}^2}\widehat{V}_\varepsilon(q)(\sin\theta)^2\frac{|\sum_{i=1}^{n}p_i|^2}{(\lambda+\frac{\nu^2}{2}|\sum_{i=1}^{n}p_i+q|^2)^2}dq\right).
	\end{align*}
		where $\theta$ is the angle between $q$ and $\sum_{i=1}^{n}p_i$. Since $(sin\theta)^2\leq\frac{|\sum_{i=1}^{n}p_i+q|^2}{|\sum_{i=1}^{n}p_i|^2}$, the term in the supremum in the right hand side of the inequality above can be bounded by
				\begin{align*}
		            \int_{\mathbb{R}^2}\widehat{V}_\varepsilon(q)(\sin\theta)^2\frac{|\sum_{i=1}^{n}p_i|^2}{(\lambda+\frac{\nu^2}{2}|\sum_{i=1}^{n}p_i+q|^2)^2}dq
					\leq
					\frac{2}{\nu^2}\int_{\mathbb{R}^2}\frac{\widehat{V}_\varepsilon(q)}{\lambda+\frac{\nu^2}{2}|\sum_{i=1}^{n}p_i+q|^2}dq
					\lesssim \frac{1}{\nu^4}\log(1+\frac{\nu^2}{2\varepsilon^2\lambda}).
				\end{align*}
As $\lambda$ is fixed, and $\varepsilon$ is sufficiently small, $$\log(1+\frac{\nu^2}{2\varepsilon^2\lambda})\lesssim 2\log\frac{1}{\varepsilon}+\log\frac{\nu^2}{2\lambda}\lesssim\log\frac{1}{\varepsilon}.$$ Thus \eqref{ineq:A+'} holds.
    
For \eqref{ineq:A-}, by \eqref{gen}, for $\varphi_n\in H^\varepsilon_n$, the square of the left hand side can be written as
				\begin{align*}
					&\langle(\lambda-\mathcal{L}_0)^{-1}\mathcal{A}_{-}^{\varepsilon}\varphi_n,\mathcal{A}_{-}^{\varepsilon}\varphi_n\rangle_{\varepsilon}\\
					=&(n-1)!\frac{\hat{\lambda}^2}{\log\frac{1}{\varepsilon}}\int_{\mathbb{R}^{2(n-1)}}\prod_{k=1}^{n-1}\frac{\widehat{V}_\varepsilon(p_k)}{|p_k|^2}\frac{1}{\lambda+\frac{\nu^2}{2}|\sum_{i=1}^{n-1}p_i|^2}\times\\
					&\left|n\int_{\mathbb{R}^2}\frac{\widehat{V}_{\varepsilon}(p)}{|p|^2}\hat{\varphi}_n(p_{1:n-1},p)\left(p\times\sum_{k=1}^{n-1}p_k\right)dp\right|^2dp_{1:n-1}.
				\end{align*}
				Using Cauchy-Schwarz inequality again, it is upper bounded by
				\begin{align*}
					&nn!\frac{\hat{\lambda}^2}{\log\frac{1}{\varepsilon}}\int_{\mathbb{R}^{2n}}\prod_{k=1}^{n}\frac{\widehat{V}_\varepsilon(p_k)}{|p_k|^2}\frac{\widehat{V}_\varepsilon(p_n)}{|p_n|^2}\frac{1}{\lambda+\frac{\nu^2}{2}|\sum_{i=1}^{n-1}p_i|^2}
					|\hat{\varphi}_n(p_{1:n})|^2\left(p_n\times\sum_{k=1}^{n}p_k\right)^2dp_{1:n}\\
				=&nn!\frac{\hat{\lambda}^2}{\log\frac{1}{\varepsilon}}\int_{\mathbb{R}^{2n}}\prod_{k=1}^{n}\frac{\widehat{V}_\varepsilon(p_k)}{|p_k|^2}\left|\sum_{k=1}^{n}p_k\right|^2|\hat{\varphi}_n(p_{1:n})|^2\frac{\widehat{V}_\varepsilon(p_n)\sin^2\theta}{\lambda+\frac{\nu^2}{2}|\sum_{i=1}^{n-1}p_i|^2} dp_{1:n},
				\end{align*}
		where $\theta$ is the angle between $p_n$ and $\sum_{k=1}^{n}p_k$. Then $\frac{\widehat{V}_\varepsilon(p_n)\sin^2\theta}{\lambda+\frac{\nu^2}{2}|\sum_{i=1}^{n-1}p_i|^2}$ can be bounded by a positive constant times $ \frac1\lambda$. Therefore
$$||(\lambda-\mathcal{L}_0)^{-\frac12}\mathcal{A}_{-}^{\varepsilon}\varphi_n||_{\varepsilon}^2
\lesssim\frac{1}{\lambda\log\frac1\varepsilon}\frac{2}{\nu^2}||\sqrt{\mathcal{N}}(-\mathcal{L}_0)^{\frac12}\varphi_n||_{\varepsilon}^2
\lesssim\frac1\lambda||\sqrt{\mathcal{N}}(-\mathcal{L}_0)^{\frac12}\varphi_n||_{\varepsilon}^2.$$
\end{proof}

\section{The diffusion coefficient}\label{sec4}
This section is devoted to the proof of Theorem \ref{thm2.1}. 
We start by recalling the truncation method from \cite{Cannizzaro.2022} to truncate the generator $\mathcal{L}^\varepsilon$, yielding a truncated representation for \eqref{lap}, which is characterized by a family of recursively-defined operators.
Subsequently, in Subsection \ref{sec4.1}, we prove the replacement lemma and use it to approximate the aforementioned family of operators. The truncated limit of \eqref{lap} for each $n\in\mathbb{N}$ as $\varepsilon\to 0$ is derived in Subsection \ref{sec4.2}, and finally the proof of Theorem \ref{thm2.1} is completed.

Before considering the limit of $\textbf{E}|N_t^\varepsilon|^2$ when $\varepsilon\to 0$, we take care of its Laplace transform first. As given by \eqref{lap}, we need to obtain the limit of $\langle\mathcal{V}_1^\varepsilon,(\lambda-\mathcal{L}^{\varepsilon})^{-1}\mathcal{V}_1^\varepsilon\rangle_\varepsilon$ when $\varepsilon\to 0$. It seems that we need to find a solution $\psi^{\varepsilon}$ to the generator equation
\begin{equation}
(\lambda-\mathcal{L}^{\varepsilon})\psi^{\varepsilon}=\mathcal{V}_1^\varepsilon
\end{equation} for each $\varepsilon>0$.
However, since $\mathcal{L}^\varepsilon$ is not symmetric, $\psi^{\varepsilon}$ has components in each $H_n^\varepsilon$ and it is hard to find $\psi^{\varepsilon}$ explicitly. We apply the method in Section 4 of \cite{Cannizzaro.2022} to truncate $\mathcal{L}^\varepsilon$ by defining $\mathcal{L}^\varepsilon_n:= I_{\leq n}\mathcal{L}^\varepsilon I_{\leq n}$ with $I_{\leq n}$ the othogonal projection onto $\bigoplus_{k=0}^nH_k^\varepsilon$. Then we denote $\psi^{\varepsilon,n}$ as the solution to
\begin{equation}\label{truc-geratoreq}
(\lambda-\mathcal{L}_n^{\varepsilon})\psi^{\varepsilon,n}=\mathcal{V}_1^\varepsilon.
\end{equation}
By (4.3) of \cite{Cannizzaro.2022}, we have $\psi^{\varepsilon,n}=\sum_{j=0}^\infty \psi_{j}^{\varepsilon,n}$, and $\psi_{j}^{\varepsilon,n}\in H_j^\varepsilon$ can be computed recursively by
\begin{align}
&\psi_{1}^{\varepsilon,n}=(\lambda-\mathcal{L}_0+\mathcal{H}_n^\varepsilon)^{-1}\mathcal{V}_1^\varepsilon,\nonumber\\
&\psi_{j}^{\varepsilon,n}=(\lambda-\mathcal{L}_0+\mathcal{H}^\varepsilon_{n+1-j})^{-1}\mathcal{A}_{+}^\varepsilon \psi_{j-1}^{\varepsilon,n},\quad j\geq 2,\label{trunc}\\
&\psi_0^{\varepsilon,n}=(\lambda-\mathcal{L}_0)^{-1}\mathcal{A}^\varepsilon_{-}\psi_{1}^{\varepsilon,n},\nonumber
\end{align}
where $$\mathcal{H}_1^{\varepsilon}=0,\quad \mathcal{H}_{j+1}^{\varepsilon}=(\mathcal{A}_{+}^{\varepsilon})^{*}(\lambda-\mathcal{L}_0+\mathcal{H}_j^{\varepsilon})^{-1}\mathcal{A}_{+}^{\varepsilon}.$$ Since $\psi_{1}^{\varepsilon,n}=\int_{\mathbb{R}^2}f(x)\pi_1^xdx\in H_1^\varepsilon$ for some distribution $f$, by \eqref{gen1}, for any $\sigma\in\Omega,$ we have
   \begin{align*}
\mathcal{A}^\varepsilon\psi_{1}^{\varepsilon,n}(\sigma)&=\frac{\hat{\lambda}}{\sqrt{\log\frac{1}{\varepsilon}}}\sigma(0)\cdot\int_{\mathbb{R}^2}f(x)\nabla\sigma_1(x)dx\\
&=-\frac{\hat{\lambda}}{\sqrt{\log\frac{1}{\varepsilon}}}\sum_{k=1}^2\int_{\mathbb{R}^2\times\mathbb{R}^2}\partial_{x_k}f(x)\delta_0(y)\sigma_1(x)\sigma_k(y)dxdy.
\end{align*}
By the definition of wick product, it equals 
\begin{align*}
&-\frac{\hat{\lambda}}{\sqrt{\log\frac{1}{\varepsilon}}}\left(\sum_{k=1}^2\int_{\mathbb{R}^2\times\mathbb{R}^2}\partial_{x_k}f(x)\delta_0(y):\pi_1^x\pi_k^y:dxdy(\sigma)+\sum_{k=1}^2\int_{\mathbb{R}^2\times\mathbb{R}^2}\partial_{x_k}f(x)\delta_0(y)\mathbb{E}^\varepsilon(\pi_1^x\pi_k^y)dxdy\right).
\end{align*}
Therefore 
\begin{align*}
\mathcal{A}_{-}^\varepsilon\psi_{1}^{\varepsilon,n}&=\frac{\hat{\lambda}}{\sqrt{\log\frac{1}{\varepsilon}}}\sum_{k=1}^2\int_{\mathbb{R}^2\times\mathbb{R}^2}\partial_{x_k}f(x)\delta_0(y)\partial_{x_1}^\perp\partial_{x_k}^\perp V^\varepsilon*g(x-y)dxdy\\
&=\frac{\hat{\lambda}}{\sqrt{\log\frac{1}{\varepsilon}}}\int_{\mathbb{R}^2}\sum_{k=1}^2\partial_{x_k}\partial_{x_k}^\perp\partial_{x_1}^\perp f(x) V^\varepsilon*g(x)dx=0.
\end{align*}
Consequently, we have $\psi_0^{\varepsilon,n}=0,$ and $\psi^{\varepsilon,n}=\sum_{j=1}^\infty \psi_{j}^{\varepsilon,n}\in\oplus_{j=1}^\infty H_j^\varepsilon.$ Since $\mathcal{V}_1^\varepsilon\in H_1^\varepsilon$, and $\mathcal{L}^\varepsilon_n$ is a truncated approximation for $\mathcal{L}^\varepsilon$, we turn to consider the limit of
\begin{equation}\label{n-diffusive}
\langle\mathcal{V}_1^\varepsilon,(\lambda-\mathcal{L}_0+\mathcal{H}_n^{\varepsilon})^{-1}\mathcal{V}_1^\varepsilon\rangle_\varepsilon
\end{equation}
when $\varepsilon\to0$ instead of $\langle\mathcal{V}_1^\varepsilon,(\lambda-\mathcal{L}^{\varepsilon})^{-1}\mathcal{V}_1^\varepsilon\rangle_\varepsilon$.
Since $\mathcal{H}_n^\varepsilon$ is defined recursively, we need to give an estimate for the operator $\mathcal{H}_n^\varepsilon$ as $\varepsilon\to 0$.

\subsection{The replacement operator}\label{sec4.1} The main result of this subsection is the following proposition, which shows that $\mathcal{H}_n^\varepsilon$ can be approximated by a diagonal operator which is given by a nonlinear transformation of $\mathcal{L}_0$. To do this,
we recall some notations from \cite{Cannizzaro.2023} first.
For $0<\varepsilon<1$, let $L^{\varepsilon}$ be the non-negative function on $(0,\infty)$ defined as
\begin{equation}\label{Le}
L^{\varepsilon}(x):=\frac{\pi\hat{\lambda}^2}{|\log\varepsilon^2|}\log\left(1+\frac{1}{\varepsilon^2x}\right).
\end{equation}
\begin{proposition}\label{prop:rep}
Let $0<\varepsilon<\frac12$, $L^\varepsilon$ be defined as \eqref{Le}. Then for every $\lambda> 0$, $n,j\in \mathbb{N}^{+}$, there exists a constant $C=C(n,j,\lambda)$ such that for any $\varphi_1, \varphi_2\in H_n^{\varepsilon}$, there holds that
\begin{equation}\label{ineq:proprep}
|\langle\varphi_1,\left[\mathcal{H}_j^{\varepsilon}+\frac{4}{\nu^4}G_j(L^{\varepsilon}(\lambda-\mathcal{L}_0))\mathcal{L}_0\right]\varphi_2\rangle_{\varepsilon}|\leq C\delta_{\varepsilon}||(-\mathcal{L}_0)^{\frac12}\varphi_1||_{\varepsilon}||(-\mathcal{L}_0)^{\frac12}\varphi_2||_{\varepsilon},
\end{equation}
where $\delta_{\varepsilon}\to 0$ as $\varepsilon\to 0$ uniformly in $n,j$ and $\lambda$.
$G_j$ is defined on $[0,\infty)$ by
\begin{align*}
 G_{j}(x)=\left\{
\begin{aligned}
&0,    &j=1, \\
&\int_{0}^{x}\frac{1}{1+\frac{4}{\nu^4}G_{j-1}(y)}dy, &j\geq 2,
\end{aligned}\right.
\end{align*}
and for all $j\in\mathbb{N}^{+},x\geq 0$, it satisfies

$(i)\; G_j(0)=0,\;G_j(x)\geq 0,$

$(ii)\;|G_j(x)|\leq x,\;|G_j^{'}(x)|\leq 1,\;|G_j^{''}(x)|\leq \frac{4}{\nu^4}.$
\end{proposition}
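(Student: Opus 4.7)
The plan is to prove the estimate by induction on $j$, uniformly in $n\in\mathbb{N}^{+}$, with $\lambda > 0$ fixed. The base case $j=1$ is immediate since $\mathcal{H}_1^\varepsilon = 0$ by definition and $G_1 \equiv 0$, so both sides of \eqref{ineq:proprep} vanish. The pointwise properties (i)--(ii) of $G_j$ follow by a separate elementary induction: from $G_j'(x) = (1+\tfrac{4}{\nu^4}G_{j-1}(x))^{-1} \in (0,1]$ one obtains $0\leq G_j(x)\leq x$ and $|G_j'|\leq 1$, and differentiating once more gives $G_j''(x) = -\tfrac{4}{\nu^4}G_{j-1}'(x)(1+\tfrac{4}{\nu^4}G_{j-1}(x))^{-2}$, hence $|G_j''|\leq \tfrac{4}{\nu^4}$.

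For the inductive step $j \mapsto j+1$, I would introduce the diagonal comparison operator
$$\mathcal{K}_j^\varepsilon := \lambda - \mathcal{L}_0 + \tfrac{4}{\nu^4}G_j(L^\varepsilon(\lambda-\mathcal{L}_0))(-\mathcal{L}_0),$$
whose Fourier symbol on $H_{n+1}^\varepsilon$ depends only on $|\sum_{i=1}^{n+1}p_i|^2$. Combining $\mathcal{H}_{j+1}^\varepsilon = (\mathcal{A}_{+}^\varepsilon)^{*}(\lambda-\mathcal{L}_0+\mathcal{H}_j^\varepsilon)^{-1}\mathcal{A}_{+}^\varepsilon$ with the resolvent identity
$$(\lambda-\mathcal{L}_0+\mathcal{H}_j^\varepsilon)^{-1} - (\mathcal{K}_j^\varepsilon)^{-1} = -(\lambda-\mathcal{L}_0+\mathcal{H}_j^\varepsilon)^{-1}\bigl[\mathcal{H}_j^\varepsilon + \tfrac{4}{\nu^4}G_j(L^\varepsilon(\lambda-\mathcal{L}_0))\mathcal{L}_0\bigr](\mathcal{K}_j^\varepsilon)^{-1},$$
reduces $\mathcal{H}_{j+1}^\varepsilon$ to $(\mathcal{A}_{+}^\varepsilon)^{*}(\mathcal{K}_j^\varepsilon)^{-1}\mathcal{A}_{+}^\varepsilon$ plus a remainder. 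The remainder, applied bilinearly to $(\varphi_1,\varphi_2)$, has as its inner piece exactly the induction hypothesis on $H_{n+1}^\varepsilon$; Lemma \ref{lem:estimate} then absorbs $\mathcal{A}_{+}^\varepsilon$ and $(\mathcal{A}_{+}^\varepsilon)^{*}$ into factors $(-\mathcal{L}_0)^{1/2}$, producing an error of the required form $C(n,j+1,\lambda)\delta_\varepsilon\|(-\mathcal{L}_0)^{1/2}\varphi_1\|_\varepsilon\|(-\mathcal{L}_0)^{1/2}\varphi_2\|_\varepsilon$.

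It then remains to match $(\mathcal{A}_{+}^\varepsilon)^{*}(\mathcal{K}_j^\varepsilon)^{-1}\mathcal{A}_{+}^\varepsilon$ with $-\tfrac{4}{\nu^4}G_{j+1}(L^\varepsilon(\lambda-\mathcal{L}_0))\mathcal{L}_0$ on $H_n^\varepsilon$. Applying Lemma \ref{lem:diag} splits the bilinear form into a diagonal and an off-diagonal piece; the off-diagonal piece vanishes as $\varepsilon\to 0$ by Cauchy--Schwarz together with the cancellation of the cross product $(p_n\times \sum_{i}p_i)(p_{n+1}\times \sum_{i}p_i)$ under the radially symmetric weight $\widehat{V}_\varepsilon$. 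For the diagonal piece, writing $P = \sum_{i=1}^n p_i$ and $q = p_{n+1}$, the core quantity is
$$\frac{\hat{\lambda}^2}{\log(1/\varepsilon)}\int_{\mathbb{R}^2}\frac{\widehat{V}_\varepsilon(q)(q\times P)^2/|q|^2}{\lambda + \tfrac{\nu^2}{2}|P+q|^2\bigl(1+\tfrac{4}{\nu^4}G_j(L^\varepsilon(\lambda+\tfrac{\nu^2}{2}|P+q|^2))\bigr)}\,dq.$$
Polar coordinates reduce $(q\times P)^2/|q|^2 = |P|^2\sin^2\theta$ with angular integral $\pi$; then the substitutions $y = \lambda + \tfrac{\nu^2}{2}(|P|^2+|q|^2)$ and $v = L^\varepsilon(y)$, for which $(L^\varepsilon)'(y) \sim -\tfrac{\pi\hat{\lambda}^2}{|\log\varepsilon^2|\,y}$, exactly absorb the $\hat{\lambda}^2/\log(1/\varepsilon)$ prefactor and telescope the integral to $\tfrac{2}{\nu^2}|P|^2 G_{j+1}(L^\varepsilon(\lambda+\tfrac{\nu^2}{2}|P|^2))$, which is precisely the Fourier symbol of $\tfrac{4}{\nu^4}G_{j+1}(L^\varepsilon(\lambda-\mathcal{L}_0))(-\mathcal{L}_0)$ on $H_n^\varepsilon$. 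The hardest part will be keeping the aggregate $\delta_\varepsilon$ uniform in $n,j,\lambda$: the approximation $|P+q|^2\approx |P|^2+|q|^2$ fails in the transition regime $|q|\sim |P|$ and near the ultraviolet cutoff $|q|\sim 1/\varepsilon$, so these two regions must be isolated and shown to contribute $o(1)$ via the property $|G_j'|\leq 1$ established in (ii), which is precisely the reason the normalization $\tfrac{\pi\hat{\lambda}^2}{|\log\varepsilon^2|}$ in $L^\varepsilon$ has been engineered as it is.
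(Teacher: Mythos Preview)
Your overall architecture matches the paper's: induction on $j$, the resolvent identity to peel off $(\lambda-\mathcal{L}_0+\mathcal{H}_j^\varepsilon)^{-1}-(\mathcal{K}_j^\varepsilon)^{-1}$, the induction hypothesis on $H_{n+1}^\varepsilon$ combined with \eqref{ineq:A+} for the remainder, and then the diagonal/off-diagonal decomposition of $(\mathcal{A}_+^\varepsilon)^*(\mathcal{K}_j^\varepsilon)^{-1}\mathcal{A}_+^\varepsilon$. The paper packages this last step as a standalone Replacement Lemma (Lemma~\ref{lem:rep}) with abstract $H,H^+$, but the content is the same, and your diagonal computation (polar coordinates, then the substitution $v=L^\varepsilon(y)$ turning the integral into $G_{j+1}$) is exactly the paper's.

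There is one genuine gap: your treatment of the off-diagonal piece. You claim it ``vanishes as $\varepsilon\to0$ by Cauchy--Schwarz together with the cancellation of the cross product $(p_n\times\sum_i p_i)(p_{n+1}\times\sum_i p_i)$ under the radially symmetric weight $\widehat V_\varepsilon$''. This is not how it works. The off-diagonal integrand from Lemma~\ref{lem:diag} carries the asymmetric factor $(p_{n+1}\times\sum_{i=1}^{n}p_i)(p_n\times\sum_{i=1}^{n+1}p_i)$, together with $\overline{\hat\varphi(p_{1:n})}\hat\varphi(p_{1:n+1\setminus n})$ and the symbol $\sigma_{n+1}$ which depends on $|\sum_{i=1}^{n+1}p_i|$; none of this is odd in $q=p_{n+1}$ against a radial weight, so there is no symmetry cancellation. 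After Cauchy--Schwarz the off-diagonal term is a priori of the \emph{same} size as the diagonal one, not smaller. The paper instead obtains the extra factor $|\sum_{i=1}^n p_i|/|q+\sum_{i=1}^{n-1}p_i|$ in the $q$-integral (via \cite[Lemma~4.6]{Cannizzaro.2022}) and then performs a hard pointwise estimate, splitting $\mathbb{R}^2$ into the three regions $\Omega_1=\{|q+q_1|<|q_2|/2\}$, $\Omega_2=\{|q+q_2|<|q_2|/2\}\setminus\Omega_1$, $\Omega_3=\mathbb{R}^2\setminus(\Omega_1\cup\Omega_2)$ (with $q_1=\varepsilon\sum_{i\le n-1}p_i$, $q_2=\varepsilon\sum_{i\le n}p_i$) and bounding each region separately; the outcome is $O(1/\log(1/\varepsilon))$. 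No cancellation is used anywhere.

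Your diagonal sketch is correct in outline, and you rightly flag the replacement $|P+q|^2\to|P|^2+|q|^2$ as the delicate point. In the paper this is isolated as the appendix Lemma~\ref{lem7.1}, which controls separately the error from this replacement, the error near the ultraviolet cutoff $|q|\sim 1/\varepsilon$, and the discrepancy between $\lambda_\varepsilon+\Gamma H$ and $\Gamma_1 H$ in the denominator; the Lipschitz bounds $|G_j'|\le 1$ and $|H'|\le K$ are precisely what is used there, as you anticipated.
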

The proof of this proposition relies on the Replacement Lemma below, which provides an estimate for a broader class of operators and plays a crucial role in the following section.
\begin{lemma}[Replacement Lemma]\label{lem:rep}
Suppose $H,H^{+}$ be real differentiable functions on $[0,\infty)$ satisfying
				
$(i)$ for every compact subset $D\subseteq[0,\infty)$, there exists a finite constant $K>0$ such that $\sup_{y\in D}\{|H(y)|,\;|H^{'}(y)|,\;|H^{+}(y)|,\;|(H^{+})^{'}(y)|\}\leq K$,
				
$(ii)$ for all $y\geq 0$, $H(y)\geq 1, \;H^{+}(y)\geq 0.$ 
    
Let $\lambda> 0$, $0<\varepsilon<\frac12$, $\mathcal{P}^{\varepsilon}:=\big(\lambda-H^{+}(L^{\varepsilon}(\lambda-\mathcal{L}_0))\mathcal{L}_0\big)\big(\lambda-H(L^{\varepsilon}(\lambda-\mathcal{L}_0))\mathcal{L}_0\big)^{-2}$. Then for every $n\in \mathbb{N}^{+}$, $\varphi_1,\varphi_2\in H_n^{\varepsilon}$, there exists a constant $C=C(n,K,\lambda)$ such that
\begin{equation}\label{ineq:rep}
|\langle\big((\mathcal{A}_{+}^{\varepsilon})^{*}\mathcal{P}^{\varepsilon}\mathcal{A}_{+}^{\varepsilon}+\frac{4}{\nu^4}\widetilde{H}(L^{\varepsilon}(\lambda-\mathcal{L}_0))\mathcal{L}_0\big)\varphi_1,\varphi_2\rangle_{\varepsilon}|\leq C\delta_{\varepsilon}||(-\mathcal{L}_0)^{\frac12}\varphi_1||_{\varepsilon}||(-\mathcal{L}_0)^{\frac12}\varphi_2||_{\varepsilon},
\end{equation}
where $\delta_{\varepsilon}\to 0$ as $\varepsilon\to 0$ uniformly in $n,K,\lambda$, and $\widetilde{H}$ is defined as 
\begin{equation}\label{wH}
\widetilde{H}(x):=\int_{0}^{x}\frac{H^{+}(y)}{(H(y))^2}dy.
\end{equation}
\end{lemma}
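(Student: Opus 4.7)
The plan is to evaluate both sides of \eqref{ineq:rep} in the Wiener chaos, reduce to an integration in the single variable $q=p_{n+1}$, and then perform a non-linear change of variables to identify the leading order in $\varepsilon$ as a functional calculus of $\mathcal{L}_{0}$. Since $\mathcal{P}^{\varepsilon}$ is a Borel function of the self-adjoint operator $\mathcal{L}_{0}$, it is diagonal on each $H_{n+1}^{\varepsilon}$ with multiplier
$$\sigma_{n+1}(p_{1:n+1})=\frac{\lambda+H^{+}(L^{\varepsilon}(\lambda+\mu))\mu}{(\lambda+H(L^{\varepsilon}(\lambda+\mu))\mu)^{2}},\qquad \mu=\tfrac{\nu^{2}}{2}\Bigl|\sum_{i=1}^{n+1}p_{i}\Bigr|^{2}.$$
Polarizing Lemma \ref{lem:diag} in $(\varphi_{1},\varphi_{2})$ decomposes $\langle\varphi_{1},(\mathcal{A}_{+}^{\varepsilon})^{*}\mathcal{P}^{\varepsilon}\mathcal{A}_{+}^{\varepsilon}\varphi_{2}\rangle_{\varepsilon}$ into a diagonal and an off-diagonal contribution. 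The off-diagonal piece is controlled as in the proof of Lemma \ref{lem:estimate}: after Cauchy--Schwarz combined with $2ab\leq a^{2}+b^{2}$ and the relabelling $p_{n}\leftrightarrow p_{n+1}$ in one of the resulting integrals, it is bounded by $n$ times a symmetric expression of the same shape as the diagonal term in $(\varphi_{1},\varphi_{1})$ and $(\varphi_{2},\varphi_{2})$, with the factor $n$ absorbed into $C(n,K,\lambda)$.

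The substance of the argument is the analysis of the diagonal piece. Setting $P:=\sum_{i=1}^{n}p_{i}$ and $q:=p_{n+1}$, and using $\bigl(q\times\sum_{i=1}^{n}p_{i}\bigr)^{2}=|q|^{2}|P|^{2}\sin^{2}\theta_{P,q}$, the diagonal piece equals $n!\int_{\mathbb{R}^{2n}}\prod_{k=1}^{n}\widehat{V}_{\varepsilon}(p_{k})/|p_{k}|^{2}\cdot\overline{\hat{\varphi}_{1}}\hat{\varphi}_{2}\,K^{\varepsilon}(P)\,dp_{1:n}$, where
$$K^{\varepsilon}(P)=\frac{\hat{\lambda}^{2}|P|^{2}}{\log(1/\varepsilon)}\int_{\mathbb{R}^{2}}\widehat{V}(\varepsilon q)\sin^{2}\theta_{P,q}\,\frac{\lambda+H^{+}(L^{\varepsilon}(\lambda+\tfrac{\nu^{2}}{2}|P+q|^{2}))\tfrac{\nu^{2}}{2}|P+q|^{2}}{\bigl(\lambda+H(L^{\varepsilon}(\lambda+\tfrac{\nu^{2}}{2}|P+q|^{2}))\tfrac{\nu^{2}}{2}|P+q|^{2}\bigr)^{2}}\,dq.$$
Noting that $\tfrac{4}{\nu^{4}}\widetilde{H}(L^{\varepsilon}(\lambda-\mathcal{L}_{0}))\mathcal{L}_{0}$ acts on $H_{n}^{\varepsilon}$ as multiplication by $-\tfrac{2|P|^{2}}{\nu^{2}}\widetilde{H}(L^{\varepsilon}(\lambda+\tfrac{\nu^{2}}{2}|P|^{2}))$, and that $\|(-\mathcal{L}_{0})^{1/2}\varphi_{j}\|_{\varepsilon}^{2}=\tfrac{\nu^{2}}{2}n!\int\prod_{k}\widehat{V}_{\varepsilon}(p_{k})/|p_{k}|^{2}\cdot|P|^{2}|\hat{\varphi}_{j}|^{2}dp_{1:n}$, the claim \eqref{ineq:rep} reduces, via Cauchy--Schwarz, to the pointwise estimate $|K^{\varepsilon}(P)-\tfrac{2|P|^{2}}{\nu^{2}}\widetilde{H}(L^{\varepsilon}(\lambda+\tfrac{\nu^{2}}{2}|P|^{2}))|\leq \delta_{\varepsilon}|P|^{2}$ uniformly in $P,n,K,\lambda$.

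For the pointwise estimate I pass to polar coordinates $q=re^{i\phi}$ with $\phi$ measured from $P$, so $\sin^{2}\theta_{P,q}=\sin^{2}\phi$ and $|P+q|^{2}=|P|^{2}+2|P|r\cos\phi+r^{2}$, and split the $r$-integral at $r=M|P|$ for a parameter $M=M(\varepsilon)\to\infty$. On the inner region $r\leq M|P|$ the integrand is uniformly bounded, so this region contributes at most $C\hat{\lambda}^{2}M^{2}|P|^{4}/\log(1/\varepsilon)$, which is $o(|P|^{2})$ provided $M$ grows slower than $\sqrt{\log(1/\varepsilon)}/|P|$. On the outer region $r>M|P|$, the error from replacing $|P+q|^{2}$ by $r^{2}$ inside $H\circ L^{\varepsilon}$ and $H^{+}\circ L^{\varepsilon}$ is controlled by the mean-value theorem using $|H'|,|(H^{+})'|\leq K$ and the elementary bound $|(L^{\varepsilon})'(x)|\leq \tfrac{\pi\hat{\lambda}^{2}}{2\log(1/\varepsilon)x}$. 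After this replacement the angular integration gives $\int_{0}^{2\pi}\sin^{2}\phi\,d\phi=\pi$, and the substitutions $s=\tfrac{\nu^{2}}{2}r^{2}$ followed by $u=L^{\varepsilon}(\lambda+s)$, for which $ds=-\tfrac{2\log(1/\varepsilon)}{\pi\hat{\lambda}^{2}}(\lambda+s)(1+\varepsilon^{2}(\lambda+s))\,du$, reduce the outer piece to $\tfrac{2|P|^{2}}{\nu^{2}}\int_{0}^{L^{\varepsilon}(\lambda+\nu^{2}M^{2}|P|^{2}/2)}\tfrac{(\lambda+H^{+}(u)s)(\lambda+s)}{(\lambda+H(u)s)^{2}}du+o(|P|^{2})$. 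Since for fixed $u\in(0,L^{\varepsilon}(\lambda))$ one has $s(u,\varepsilon)\to\infty$ as $\varepsilon\to 0$, the integrand converges pointwise to $H^{+}(u)/H(u)^{2}$, and dominated convergence yields $\tfrac{2|P|^{2}}{\nu^{2}}\widetilde{H}(L^{\varepsilon}(\lambda+\tfrac{\nu^{2}}{2}|P|^{2}))$, with the gap $L^{\varepsilon}(\lambda+\nu^{2}M^{2}|P|^{2}/2)-L^{\varepsilon}(\lambda+\tfrac{\nu^{2}}{2}|P|^{2})=O(\log M/\log(1/\varepsilon))$ absorbed into the error.

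The main obstacle is obtaining the $o(1)$ estimate uniformly in $P$, especially in the tail regime $|P|\gtrsim 1/\varepsilon$ where the Fourier support condition $|q|\leq 1/\varepsilon$ forces $r\lesssim|P|$ and the above splitting collapses. There, however, both sides of the target estimate are separately $o(|P|^{2})$: the target because $L^{\varepsilon}(\lambda+\tfrac{\nu^{2}}{2}|P|^{2})\lesssim 1/\log(1/\varepsilon)$ for such $P$ and $\widetilde{H}$ is $1$-Lipschitz, while $K^{\varepsilon}(P)/|P|^{2}\lesssim 1/\log(1/\varepsilon)$ follows from the bound $f(x)\leq C(K,\lambda)/(1+x)$ on the kernel appearing in $K^{\varepsilon}$, combined with $|P+q|\geq|P|/2$ on most of the support of $\widehat{V}(\varepsilon\cdot)$. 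The remaining intermediate regime is handled by interpolating between these two bounds, with the sharp estimates on $L^{\varepsilon}$ and its derivatives deferred to the appendix.
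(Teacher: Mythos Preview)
Your off-diagonal treatment has a genuine gap. You claim that after Cauchy--Schwarz and symmetrization the off-diagonal piece is bounded by $n$ times an expression ``of the same shape as the diagonal term,'' and then absorb $n$ into $C(n,K,\lambda)$. But the diagonal term is \emph{not} small: your own analysis shows $K^{\varepsilon}(P)\sim \tfrac{2}{\nu^{2}}|P|^{2}\widetilde{H}(L^{\varepsilon}(\lambda+\tfrac{\nu^{2}}{2}|P|^{2}))$, which is of order $|P|^{2}$, not $\delta_{\varepsilon}|P|^{2}$. The cancellation with the $\widetilde{H}(L^{\varepsilon}(\lambda-\mathcal{L}_{0}))\mathcal{L}_{0}$ counterterm happens only in the diagonal piece; the off-diagonal must be shown to be $O(\delta_{\varepsilon})$ on its own, and your argument gives only $O(1)$.

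The paper obtains the missing smallness by exploiting structure that your Cauchy--Schwarz step discards. Following \cite[Lemma 4.6]{Cannizzaro.2022}, the off-diagonal kernel, after Cauchy--Schwarz is applied asymmetrically, carries an extra factor $|\sum_{i\leq n}p_{i}|/|p_{n+1}+\sum_{i\leq n-1}p_{i}|$ relative to the diagonal one. After rescaling $q=\varepsilon p_{n+1}$ and writing $q_{1}=\varepsilon\sum_{i<n}p_{i}$, $q_{2}=\varepsilon\sum_{i\leq n}p_{i}$, the resulting $q$-integral is split into $\Omega_{1}=\{|q+q_{1}|<|q_{2}|/2\}$, $\Omega_{2}=\{|q+q_{2}|<|q_{2}|/2\}\setminus\Omega_{1}$, and $\Omega_{3}$ the complement. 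On $\Omega_{1},\Omega_{2}$ the bound $\sin^{2}\theta\lesssim|q+q_{2}|^{2}/|q_{2}|^{2}$ kills the logarithmic divergence; on $\Omega_{3}$ one uses H\"older with exponents $(3/2,3)$. Each region contributes $O(1)$ \emph{before} dividing by $\log(1/\varepsilon)$, hence $O(\delta_{\varepsilon})$ overall.

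Your diagonal argument is in the right spirit, but the uniformity in $P$ is not secured: the inner-region bound $M^{2}|P|^{2}/\log(1/\varepsilon)\to 0$ and the requirement $M\to\infty$ cannot be reconciled uniformly over $P\in\mathbb{R}^{2}$, and the appeal to ``interpolating between these two bounds'' in the intermediate regime is not an argument. The paper avoids this entirely (Lemma~\ref{lem7.1}): it first replaces the kernel by $H^{+}/(\Gamma_{1}H^{2})$, then replaces $\Gamma_{1}(q)=\lambda_{\varepsilon}+\tfrac{\nu^{2}}{2}|\varepsilon P+q|^{2}$ by the radial $\Gamma_{2}(q)=\lambda_{\varepsilon}+\tfrac{\nu^{2}}{2}(|q|^{2}+|\varepsilon P|^{2})$ via a two-region split in $q$, and only then integrates in polar coordinates and performs the change of variable $t=L^{\varepsilon}(\rho/\varepsilon^{2})$. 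Each of these replacements introduces an error that is bounded uniformly in $(\varepsilon,P)$, so the prefactor $1/\log(1/\varepsilon)$ turns it into $\delta_{\varepsilon}$.
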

\begin{proof}
By Lemma \ref{lem:diag}, the first part of the left hand side of \eqref{ineq:rep} can be decomposed as diagonal and off-diagonal terms. Then we can obtain that
\begin{align}
&|\langle\big((\mathcal{A}_{+}^{\varepsilon})^{*}\mathcal{P}^{\varepsilon}\mathcal{A}_{+}^{\varepsilon}+\frac{4}{\nu^4}\widetilde{H}(L^{\varepsilon}(\lambda-\mathcal{L}_0))\mathcal{L}_0\big)\varphi_1,\varphi_2\rangle_{\varepsilon}| \notag\\
\leq& |\langle\mathcal{P}^{\varepsilon}\mathcal{A}_{+}^{\varepsilon}\varphi_1,\mathcal{A}_{+}^{\varepsilon}\varphi_2\rangle_{\varepsilon,diag}+\frac{4}{\nu^4}\langle\widetilde{H}(L^{\varepsilon}(\lambda-\mathcal{L}_0))\mathcal{L}_0\varphi_1,\varphi_2\rangle_{\varepsilon}| \label{rep1}\\
&+|\langle\mathcal{P}^{\varepsilon}\mathcal{A}_{+}^{\varepsilon}\varphi_1,\mathcal{A}_{+}^{\varepsilon}\varphi_2\rangle_{\varepsilon,off}|. \label{rep2}
\end{align}
We will estimate \eqref{rep1} and \eqref{rep2} separately.
				
First for \eqref{rep1}, $\mathcal{P}^{\varepsilon}$ is a diagonal operator with the Fourier multiplier $s^\varepsilon:=\{s_n^\varepsilon:n\in\mathbb{N}_+\}$, such that for all $\psi\in H_n^\varepsilon$,
$$\widehat{\mathcal{P}^{\varepsilon}\psi}(p_{1:n})=s_n^\varepsilon(p_{1:n})\hat{\psi}(p_{1:n}),\quad s_n^\varepsilon(p_{1:n})=\frac{\lambda+\frac{\nu^2}{2}|\sum_{i=1}^{n}p_i|^2H^{+}(L^{\varepsilon}(\lambda+\frac{\nu^2}{2}|\sum_{i=1}^{n}p_i|^2))}{(\lambda+\frac{\nu^2}{2}|\sum_{i=1}^{n}p_i|^2H(L^{\varepsilon}(\lambda+\frac{\nu^2}{2}|\sum_{i=1}^{n}p_i|^2)))^2}.$$
Therefore \eqref{rep1} is equal to
\begin{align*}
&\left|n!\frac{\hat{\lambda}^2}{\log\frac{1}{\varepsilon}}\int_{\mathbb{R}^{2(n+1)}}\prod_{k=1}^{n+1}\frac{\widehat{V}_\varepsilon(p_k)}{|p_k|^2}s_{n+1}^\varepsilon(p_{1:n+1})\left(p_{n+1}\times\sum_{i=1}^{n}p_i\right)^2\hat{\varphi}_1(p_{1:n})\overline{\hat{\varphi}_2(p_{1:n})}dp_{1:n+1}\right.\\
 &\left.-n!\int_{\mathbb{R}^{2n}}\prod_{k=1}^{n}\frac{\widehat{V}_\varepsilon(p_k)}{|p_k|^2}\frac{\nu^2}{2}\left|\sum_{i=1}^{n}p_i\right|^2\frac{4}{\nu^4}\widetilde{H}(L^{\varepsilon}(\lambda+\frac{\nu^2}{2}|\sum_{i=1}^{n}p_i|^2))\hat{\varphi}_1(p_{1:n})\overline{\hat{\varphi}_2(p_{1:n})}dp_{1:n}\right|\\
=&n!\left|\int_{\mathbb{R}^{2n}}\prod_{k=1}^{n}\frac{\widehat{V}_\varepsilon(p_k)}{|p_k|^2}\frac{\nu^2}{2}\left|\sum_{i=1}^{n}p_i\right|^2\hat{\varphi}_1(p_{1:n})\overline{\hat{\varphi}_2(p_{1:n})}\times\right.\\
&\left.\left(\int_{\mathbb{R}^2}\frac{\hat{\lambda}^2}{\log\frac{1}{\varepsilon}}\frac{2}{\nu^2}\widehat{V}_\varepsilon(q)(\sin^2\theta)s^\varepsilon_{n+1}(p_{1:n},q)dq-\frac{4}{\nu^4}\widetilde{H}(L^{\varepsilon}(\lambda+\frac{\nu^2}{2}|\sum_{i=1}^{n}p_i|^2))\right)dp_{1:n}\right|\\
\leq& \widetilde{\delta}_\varepsilon n!\left|\int_{\mathbb{R}^{2n}}\prod_{k=1}^{n}\frac{\widehat{V}_\varepsilon(p_k)}{|p_k|^2}\frac{\nu^2}{2}\left|\sum_{i=1}^{n}p_i\right|^2\hat{\varphi}_1(p_{1:n})\overline{\hat{\varphi}_2(p_{1:n})}dp_{1:n}\right|\\
\leq&\widetilde{\delta_{\varepsilon}}||(-\mathcal{L}_0)^{\frac12}\varphi_1||_{\varepsilon}||(-\mathcal{L}_0)^{\frac12}\varphi_2||_{\varepsilon},
\end{align*}
where we use Cauchy-Schwarz inequality in the last step, and 
$$\widetilde{\delta_{\varepsilon}}:=\sup_{x\in \mathbb{R}^{2n}:\sum_{i=1}^nx_i\neq0}\left|\frac{\hat{\lambda}^2}{\log\frac{1}{\varepsilon}}\frac{2}{\nu^2}\int_{\mathbb{R}^2}\widehat{V}_\varepsilon(q)(\sin^2\theta)s_{n+1}^\varepsilon(x_{1:n},q)dq-\frac{4}{\nu^4}\widetilde{H}(L^{\varepsilon}(\lambda+\frac{\nu^2}{2}|\sum_{i=1}^{n}x_i|^2))\right|,$$
$\theta$ is the angle between $q$ and $\sum_{i=1}^{n}x_i$.
By Lemma \ref{lem7.1} in Appendix, where $\Gamma_1(0)=\varepsilon^2\lambda+\frac{\nu^2}{2}|\varepsilon\sum_{i=1}^nx_i|^2$ in this case, we deduce that
\begin{align*}
\widetilde{\delta_{\varepsilon}}=&\sup_{x\in \mathbb{R}^{2n}:\sum_{i=1}^nx_i\neq0}\left|\frac{\hat{\lambda}^2}{\log\frac{1}{\varepsilon}}\frac{2\pi}{\nu^4}\int_{\Gamma_1(0)}^{1}\frac{H^{+}(L^{\varepsilon}(\frac{\rho}{\varepsilon^2}))}{\rho(\rho+1)(H(L^{\varepsilon}(\frac{\rho}{\varepsilon^2})))^2}d\rho-
\frac{4}{\nu^4}\widetilde{H}(L^{\varepsilon}(\lambda+\frac{\nu^2}{2}|\sum_{i=1}^{n}x_i|^2))\right|\\
&+\frac{\hat{\lambda}^2}{\log\frac{1}{\varepsilon}}O(1),
\end{align*}
where $O(1)$ denotes a uniform bound as $\varepsilon\to0$, $L^\varepsilon$ and $\widetilde{H}$ are given by \eqref{Le}, \eqref{wH} respectively. So we only need to estimate for the supremum term above. By making the variable substitution for the integral:  $t=L^{\varepsilon}(\frac{\rho}{\varepsilon^2})$, $dt=-\frac{\pi\hat{\lambda}^2}{\log\frac{1}{\varepsilon^2}}\frac{1}{\rho(\rho+1)}d\rho,$ the term in the supremum equals
\begin{align}
&\left|\frac{4}{\nu^4}\int_{L^{\varepsilon}(\frac{1}{\varepsilon^2})}^{L^{\varepsilon}(\lambda+\frac{\nu^2}{2}|\sum_{i=1}^{n}x_i|^2)}\frac{H^{+}(t)}{(H(t))^2}dt-\frac{4}{\nu^4}\int_{0}^{L^{\varepsilon}(\lambda+\frac{\nu^2}{2}|\sum_{i=1}^{n}x_i|^2)}\frac{H^{+}(t)}{(H(t))^2}dt\right|\nonumber\\
=&\frac{4}{\nu^4}\int_{0}^{L^{\varepsilon}(\frac{1}{\varepsilon^2})}\frac{H^{+}(t)}{(H(t))^2}dt
.\label{prf:rep1}
\end{align}
For all $0<\varepsilon<\frac12,$ $L^{\varepsilon}(\frac{1}{\varepsilon^2})=\frac{\pi\hat{\lambda}^2}{\log\frac{1}{\varepsilon^2}}\log2
\leq C,$ for some constant $C>0.$ Thus by the condition $(i),(ii)$, there exists a constant $K$ such that for all $x\in [0,C]$, $|H^{+}(x)|\leq K$ and $H(x)\geq 1$. Then we have 
$\eqref{prf:rep1}\lesssim L^{\varepsilon}(\frac{1}{\varepsilon^2}).$
Therefore there exists a constant $C=C(n,K,\lambda),$ such that $\widetilde{\delta_{\varepsilon}}\leq C\frac{1}{\log\frac{1}{\varepsilon^2}}:=C\delta_{\varepsilon},$ which implies that \eqref{rep1}$\lesssim\delta_{\varepsilon}||(-\mathcal{L}_0)^{\frac12}\varphi_1||_{\varepsilon}||(-\mathcal{L}_0)^{\frac12}\varphi_2||_{\varepsilon}.$

We now treat \eqref{rep2}. Since $\hat{\varphi}_1,\hat{\varphi}_2$ are symmetric, from the proof of \cite[Lemma 4.6]{Cannizzaro.2022}, \eqref{rep2} is upper bounded by
		\begin{align}
				&n!n\frac{\hat{\lambda}^2}{\log\frac{1}{\varepsilon}}\int_{\mathbb{R}^{2(n+1)}}\prod_{k=1}^{n+1}\frac{\widehat{V}_\varepsilon(p_k)}{|p_k|^2}\left(p_{n+1}\times\sum_{i=1}^{n}p_i\right)^2\frac{|\sum_{i=1}^{n}p_i|}{|p_{n+1}+\sum_{i=1}^{n-1}p_i|}s_{n+1}^\varepsilon(p_{1:n+1})\hat{\varphi_1}(p_{1:n})\overline{\hat{\varphi_1}(p_{1:n})}dp_{1:n+1} \nonumber\\
                =&n!\int_{\mathbb{R}^{2n}}\prod_{k=1}^{n}\frac{\widehat{V}_\varepsilon(p_k)}{|p_k|^2}\left|\sum_{i=1}^{n}p_i\right|^2\hat{\varphi_1}(p_{1:n})\overline{\hat{\varphi_1}(p_{1:n})}dp_{1:n}\times\nonumber\\
                &\left(n\frac{\hat{\lambda}^2}{\log\frac{1}{\varepsilon}}\int_{R^2}\widehat{V}_\varepsilon(q)(\sin\theta)^2\frac{|\sum_{i=1}^{n}p_i|}{|q+\sum_{i=1}^{n-1}p_i|}s_{n+1}^\varepsilon(p_{1:n},q)dq\right) \nonumber\\
				\leq&\frac{2}{\nu^2}||(-\mathcal{L}_0)^{\frac12}\varphi_1||_{\varepsilon}||(-\mathcal{L}_0)^{\frac12}\varphi_2||_{\varepsilon}\times \nonumber\\
					&\sup_{p_{1:n}}\left(n\frac{\hat{\lambda}^2}{\log\frac{1}{\varepsilon}}\int_{\mathbb{R}^2}\widehat{V}_\varepsilon(q)(\sin\theta)^2\frac{|\sum_{i=1}^{n}p_i|}{|q+\sum_{i=1}^{n-1}p_i|}
					\frac{\lambda+\frac{\nu^2}{2}|\sum_{i=1}^{n}p_i+q|^2H^{+}(L^{\varepsilon}(\lambda+\frac{\nu^2}{2}|\sum_{i=1}^{n}p_i+q|^2))}{(\lambda+\frac{\nu^2}{2}|\sum_{i=1}^{n}p_i+q|^2H(L^{\varepsilon}(\lambda+\frac{\nu^2}{2}|\sum_{i=1}^{n}p_i+q|^2)))^2}dq\right), \label{rep:sup}
				\end{align}
    where $\theta$ is the angle between $q$ and $\sum_{i=1}^{n}p_i$.
    Since $\widehat{V}_\varepsilon(q)=\hat{V}(\varepsilon q)$, we make variable substitution and let $q_1:=\varepsilon\sum_{i=1}^{n-1}p_i,\;q_2:=\varepsilon\sum_{i=1}^{n}p_i$ for simplicity. Then the term in the supremum of \eqref{rep:sup} is bounded by
\begin{align}
		&n\frac{\hat{\lambda}^2}{\log\frac{1}{\varepsilon}}\int_{\mathbb{R}^2}\hat{V}(q)(\sin\theta)^2\frac{|q_2|}{|q+q_1|}
		\frac{\varepsilon^2\lambda+\frac{\nu^2}{2}|q_2+q|^2H^{+}(L^{\varepsilon}(\lambda+\frac{\nu^2}{2\varepsilon^2}|q_2+q|^2))}{(\varepsilon^2\lambda+\frac{\nu^2}{2}|q_2+q|^2H(L^{\varepsilon}(\lambda+\frac{\nu^2}{2\varepsilon^2}|q_2+q|^2)))^2}dq \nonumber\\
		\lesssim_n&\frac{\hat{\lambda}^2}{\log\frac{1}{\varepsilon}}\int_{\mathbb{R}^2}\hat{V}(q)(\sin\theta)^2\frac{|q_2|}{|q+q_1|(\varepsilon^2\lambda+\frac{\nu^2}{2}|q_2+q|^2)}dq.\label{rep:integral}
\end{align}
		In order to estimate the integral in \eqref{rep:integral}, similarly to the proof of	\cite[Lemma A.3]{Cannizzaro.2022}, let $\mathbb{R}^2=\Omega_1\cup\Omega_2\cup\Omega_3$, where $\Omega_1=\{x:|q_1+x|<\frac{|q_2|}{2}\},\;\Omega_2=\{x:|q_2+x|<\frac{|q_2|}{2}\}\backslash\Omega_1,\;\Omega_3=\mathbb{R}^2\backslash (\Omega_1\cup\Omega_2)$.
		Since $\theta$ is the angle between $q$ and $q_2$, from \cite[Lemma A.3]{Cannizzaro.2022}, $(\sin\theta)^2\leq\frac{4|q+q_2|^2}{|q_2|^2\vee(\frac14|q_1|^2)}$, for every $q\in\Omega_1$. Thus the integral of \eqref{rep:integral} in $\Omega_1$ can be bounded by
				\begin{align*}
					\frac{\hat{\lambda}^2}{\log\frac{1}{\varepsilon}}\int_{\Omega_1}\frac{\hat{V}(q)(\sin\theta)^2}{\varepsilon^2\lambda+\frac{\nu^2}{2}|q_2+q|^2}\frac{|q_2|}{|q+q_1|}dq
					&\lesssim\frac{1}{\log\frac{1}{\varepsilon}}\int_{\Omega_1}\frac{2}{\nu^2|q_2||q+q_1|}dq\\
					&\lesssim\frac{1}{\log\frac{1}{\varepsilon}}\int_{|q|\leq\frac{|q_2|}{2}}\frac{1}{|q_2||q|}dq\lesssim\frac{1}{\log\frac{1}{\varepsilon}}.
				\end{align*}
		For $\Omega_2$, since $(\sin\theta)^2\leq\frac{|q_2+q|^2}{|q_2|^2}$ and $|x|\leq|x+q_2|+|q_2|\leq\frac{3|q_2|}{2}$, for every $x\in\Omega_2$, we obtain that
				\begin{align*}
					\frac{\hat{\lambda}^2}{\log\frac{1}{\varepsilon}}\int_{\Omega_2}\frac{\hat{V}(q)(\sin\theta)^2}{\varepsilon^2\lambda+\frac{\nu^2}{2}|q_2+q|^2}\frac{|q_2|}{|q+q_1|}dq
					&\lesssim\frac{1}{\log\frac{1}{\varepsilon}}\int_{\Omega_2}\frac{2}{\nu^2|q_2||q+q_1|}dq\\
					&\leq\frac{1}{\log\frac{1}{\varepsilon}}\int_{|q|\leq\frac{3|q_2|}{2}}\frac{1}{|q_2||q+q_1|}dq
					\lesssim\frac{1}{\log\frac{1}{\varepsilon}}.
				\end{align*}
For $\Omega_3$, we bound $(\sin\theta)^2$ by 1. Recall that the support of $\hat{V}$ is in the ball of radius 1,
\begin{align}\label{eq:4.14}
	\frac{\hat{\lambda}^2}{\log\frac{1}{\varepsilon}}\int_{\Omega_3}\frac{\hat{V}(q)(\sin\theta)^2}{\varepsilon^2\lambda+\frac{\nu^2}{2}|q_2+q|^2}\frac{|q_2|}{|q+q_1|}dq
	\lesssim\frac{1}{\log\frac{1}{\varepsilon}}|q_2|\int_{\Omega_3}\frac{1}{|q_2+q|^2}\frac{1}{|q_1+q|}dq.
\end{align}
Then by applying the H\"older's inequality with exponents $\frac32$ and 3, the right hand side of \eqref{eq:4.14} is bounded by
\begin{align}
\frac{1}{\log\frac{1}{\varepsilon}}|q_2|\left(\int_{\Omega_3}\frac{1}{|q_2+q|^3}dq\right)^{\frac23}\left(\int_{\Omega_3}\frac{1}{|q_1+q|^3}dq\right)^{\frac13}
\leq \frac{|q_2|}{\log\frac{1}{\varepsilon}}\int_{|q|\geq\frac{|q_2|}{2}}\frac{1}{|q|^3}dq
\lesssim \frac{1}{\log\frac{1}{\varepsilon}}.
\end{align}
Therefore $$\eqref{rep2}\lesssim\delta_{\varepsilon}||(-\mathcal{L}_0)^{\frac12}\varphi_1||_{\varepsilon}||(-\mathcal{L}_0)^{\frac12}\varphi_2||_{\varepsilon}.$$ 

Collect all the bounds so far, \eqref{ineq:rep} follows at once.
\end{proof}

Now we can give the proof of Proposition \ref{prop:rep}.
\begin{proof}[Proof of Proposition \ref{prop:rep}]
For every $0<\varepsilon<\frac12,$ we take induction for $j$. If $j=1$, $\mathcal{H}_1^\varepsilon=G_1=0$, \eqref{ineq:proprep} holds naturally.
Suppose \eqref{ineq:proprep} holds for some $j\geq 1$, then for $j+1$, in order to apply Lemma \ref{lem:rep}, using the inductive definition of $\mathcal{H}_{j+1}^{\varepsilon}$, we plus and minus one term to get $$\langle[\mathcal{H}_{j+1}^{\varepsilon}+
\frac{4}{\nu^4}G_{j+1}(L^{\varepsilon}(\lambda-\mathcal{L}_0))\mathcal{L}_0]\varphi_1,\varphi_2\rangle_{\varepsilon}=:(I)+(II),$$
where
\begin{align*}
(I)&=\langle\Big[(\lambda-\mathcal{L}_0+\mathcal{H}_j^{\varepsilon})^{-1}-\Big(\lambda-\big(1+\frac{4}{\nu^4}G_{j}(L^{\varepsilon}(\lambda-\mathcal{L}_0))\big)\mathcal{L}_0\Big)^{-1}\Big]\mathcal{A}^{\varepsilon}_{+}\varphi_1,\mathcal{A}^{\varepsilon}_{+}\varphi_2\rangle_{\varepsilon},\\
(II)&=\langle\Big[(\mathcal{A}^{\varepsilon}_{+})^{*}\Big(\lambda-\big(1+\frac{4}{\nu^4}G_{j}(L^{\varepsilon}(\lambda-\mathcal{L}_0))\big)\mathcal{L}_0\Big)^{-1}\mathcal{A}^{\varepsilon}_{+}+\frac{4}{\nu^4}G_{j+1}(L^{\varepsilon}(\lambda-\mathcal{L}_0))\mathcal{L}_0\Big]\varphi_1,\varphi_2\rangle_{\varepsilon}.
\end{align*}
				
For $(I)$, let $A=\lambda-\mathcal{L}_0+\mathcal{C}_{A},\;B=\lambda-\mathcal{L}_0+\mathcal{C}_{B}$, where $\mathcal{C}_{A}:=\mathcal{H}_j^\varepsilon$, $\mathcal{C}_{B}:=-\frac{4}{\nu^4}G_{j}(L^{\varepsilon}(\lambda-\mathcal{L}_0))\mathcal{L}_0$ are both non-negative definite. Then $(I)$ equals $\langle(A^{-1}-B^{-1})\mathcal{A}^{\varepsilon}_{+}\varphi_1,\mathcal{A}^{\varepsilon}_{+}\varphi_2\rangle_{\varepsilon}.$ By the symmetry of $A$, using the relation $A^{-1}-B^{-1}=A^{-1}(B-A)B^{-1}$ and the induction, we get
    \begin{align*}
    |(I)|&=|\langle(B-A)B^{-1}\mathcal{A}^{\varepsilon}_{+}\varphi_1,A^{-1}\mathcal{A}^{\varepsilon}_{+}\varphi_2\rangle_{\varepsilon}|=|\langle(-\mathcal{H}_{j}^{\varepsilon}-\frac{4}{\nu^4}G_{j}(L^{\varepsilon}(\lambda-\mathcal{L}_0))\mathcal{L}_0)B^{-1}\mathcal{A}^{\varepsilon}_{+}\varphi_1,A^{-1}\mathcal{A}^{\varepsilon}_{+}\varphi_2\rangle_{\varepsilon}|\\
    &\leq C\delta_{\varepsilon}||(-\mathcal{L}_0)^{\frac12}B^{-1}\mathcal{A}^{\varepsilon}_{+}\varphi_1||_\varepsilon||(-\mathcal{L}_0)^{\frac12}A^{-1}\mathcal{A}^{\varepsilon}_{+}\varphi_2||_\varepsilon.
    \end{align*}
    Moreover, let $\mathcal{C}$ be $\mathcal{C}_{A}$ or $\mathcal{C}_{B}$, which is positive semi-definite. Then
    \begin{align*}
		||(-\mathcal{L}_0)^{\frac12}(\lambda-\mathcal{L}_0+\mathcal{C})^{-1}\mathcal{A}^{\varepsilon}_{+}\varphi_i||^2_\varepsilon
		\leq\langle\mathcal{A}^{\varepsilon}_{+}\varphi_i,(\lambda-\mathcal{L}_0+\mathcal{C})^{-1}\mathcal{A}^{\varepsilon}_{+}\varphi_i\rangle_{\varepsilon}
		\leq\langle\mathcal{A}^{\varepsilon}_{+}\varphi_i,(\lambda-\mathcal{L}_0)^{-1}\mathcal{A}^{\varepsilon}_{+}\varphi_i\rangle_{\varepsilon}.
    \end{align*}
	By \eqref{ineq:A+}, \eqref{ineq:proprep} holds for $(I)$.

For $(II)$, let $H^+=H=\frac{4}{\nu^4}G_j+1$. Since $G_j$ has property $(i)-(ii)$ in Proposition \ref{prop:rep}, $H^{+}$ and $H$ satisfy the condition $(i),(ii)$ in Lemma \ref{lem:rep} with $\widetilde{H}=G_{j+1}$. Then by using \eqref{ineq:rep}, \eqref{ineq:proprep} follows at once.
\end{proof}

\subsection{The limiting diffusivity}\label{sec4.2}		
The aim of this subsection is to prove Theorem \ref{thm2.1}. Since we have given an estimate for the operator $\mathcal{H}_n^\varepsilon$ in Proposition \ref{prop:rep}, as what we stated in the beginning of Section \ref{sec4}, we can calculate the limit of \eqref{n-diffusive} for each $n\in\mathbb{N}$ as $\varepsilon\to 0$.
\begin{proposition}\label{prop:diffu}
    Let $n\in \mathbb{N}$, $\lambda>0$, and $\psi^{\varepsilon,n}$ be the solution to \eqref{truc-geratoreq} for $\varepsilon\in(0,1)$. Then 
    \begin{align}\label{eq:diffu-prop}
        \lim_{\varepsilon\to 0}	\langle\mathcal{V}_1^\varepsilon,\psi^{\varepsilon,n}\rangle_\varepsilon=\frac{2}{\nu^2}G_{n+1}(\pi\hat{\lambda}^2).
    \end{align}
\end{proposition}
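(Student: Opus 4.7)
The plan is to reduce the left-hand side of \eqref{eq:diffu-prop} to an explicit one-dimensional integral in three steps: (a) collapse the pairing to the first Wiener chaos by orthogonality, (b) replace the unknown operator $\mathcal{H}_n^\varepsilon$ by its diagonal approximant from Proposition \ref{prop:rep}, and (c) pass to the limit in the resulting explicit integral through a substitution that exposes the recursion defining $G_{n+1}$.

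First, since $\mathcal{V}_1^\varepsilon\in H_1^\varepsilon$ while $\psi^{\varepsilon,n}=\sum_{j\ge 1}\psi_j^{\varepsilon,n}$ with $\psi_j^{\varepsilon,n}\in H_j^\varepsilon$, the orthogonality of the chaos decomposition (from \eqref{product}) collapses the pairing to
\begin{equation*}
\langle\mathcal{V}_1^\varepsilon,\psi^{\varepsilon,n}\rangle_\varepsilon
=\langle\mathcal{V}_1^\varepsilon,\psi_1^{\varepsilon,n}\rangle_\varepsilon
=\langle\mathcal{V}_1^\varepsilon,(\lambda-\mathcal{L}_0+\mathcal{H}_n^\varepsilon)^{-1}\mathcal{V}_1^\varepsilon\rangle_\varepsilon.
\end{equation*}
Next, I would introduce the diagonal positive operator $\widetilde{\mathcal{H}}_n^\varepsilon:=-\tfrac{4}{\nu^4}G_n(L^\varepsilon(\lambda-\mathcal{L}_0))\mathcal{L}_0$ (positive by property $(i)$ of $G_n$), and set $A:=\lambda-\mathcal{L}_0+\mathcal{H}_n^\varepsilon$, $B:=\lambda-\mathcal{L}_0+\widetilde{\mathcal{H}}_n^\varepsilon$. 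The identity $A^{-1}-B^{-1}=A^{-1}(B-A)B^{-1}$ together with Proposition \ref{prop:rep} gives
\begin{equation*}
\bigl|\langle\mathcal{V}_1^\varepsilon,(A^{-1}-B^{-1})\mathcal{V}_1^\varepsilon\rangle_\varepsilon\bigr|
\le C\delta_\varepsilon\|(-\mathcal{L}_0)^{1/2}A^{-1}\mathcal{V}_1^\varepsilon\|_\varepsilon\|(-\mathcal{L}_0)^{1/2}B^{-1}\mathcal{V}_1^\varepsilon\|_\varepsilon.
\end{equation*}
Since $A,B\ge-\mathcal{L}_0$ and $A,B\ge\lambda-\mathcal{L}_0$, each of these squared norms is dominated by $\langle\mathcal{V}_1^\varepsilon,(\lambda-\mathcal{L}_0)^{-1}\mathcal{V}_1^\varepsilon\rangle_\varepsilon$, which a direct polar-coordinate computation (using the compact support of $\hat V$ to truncate the potentially $\log(1/\varepsilon)$-divergent integrand) shows is $O(1)$ uniformly in $\varepsilon$. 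Hence the replacement error is $o(1)$.

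The main term $\langle\mathcal{V}_1^\varepsilon,B^{-1}\mathcal{V}_1^\varepsilon\rangle_\varepsilon$ is explicit because $B$ is diagonal on $H_1^\varepsilon$ with multiplier $\lambda+(1+\tfrac{4}{\nu^4}G_n(L^\varepsilon(\lambda+\tfrac{\nu^2}{2}|p|^2)))\tfrac{\nu^2}{2}|p|^2$. Using $\hat{\mathcal{V}}_1^\varepsilon(p)=-\imath\hat\lambda p_2/\sqrt{\log(1/\varepsilon)}$ from Remark \ref{rem:V}, passing to polar coordinates (the angular integral contributes $\pi$ through $\int\sin^2\theta\,d\theta$), and then substituting $t=L^\varepsilon(s)$ with $s=\lambda+\tfrac{\nu^2}{2}\rho^2$ (where the Jacobian is $ds=-\tfrac{\log(1/\varepsilon^2)}{\pi\hat\lambda^2}s(1+\varepsilon^2 s)\,dt$ and $\log(1/\varepsilon^2)=2\log(1/\varepsilon)$), the integral becomes
\begin{equation*}
\frac{2}{\nu^2}\int_0^{L^\varepsilon(\lambda)}\hat V(\varepsilon\rho(t,\varepsilon))\,\frac{s(t,\varepsilon)\bigl(1+\varepsilon^2 s(t,\varepsilon)\bigr)}{\lambda+\bigl(1+\tfrac{4}{\nu^4}G_n(t)\bigr)\bigl(s(t,\varepsilon)-\lambda\bigr)}\,dt.
\end{equation*}

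Finally, I would pass to the limit by dominated convergence. The upper endpoint satisfies $L^\varepsilon(\lambda)\to\pi\hat\lambda^2$ as $\varepsilon\to 0$. For each fixed $t\in(0,\pi\hat\lambda^2)$ the defining relation yields $s\sim\varepsilon^{-2(1-t/(\pi\hat\lambda^2))}$, so $s\to\infty$, $\varepsilon^2 s\to 0$, and $\varepsilon\rho\to 0$; the integrand therefore converges pointwise to $(1+\tfrac{4}{\nu^4}G_n(t))^{-1}$, using $\hat V(0)=1$. A uniform majorant follows from $|\hat V|\le 1$, the support constraint $\varepsilon\rho\le 1$ (which forces $\varepsilon^2 s\le C$), and the elementary inequality $s/(\lambda+c(s-\lambda))\le 1$ valid for $c\ge 1$ and $s\ge\lambda$. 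The limit is
\begin{equation*}
\frac{2}{\nu^2}\int_0^{\pi\hat\lambda^2}\frac{dt}{1+\tfrac{4}{\nu^4}G_n(t)}=\frac{2}{\nu^2}G_{n+1}(\pi\hat\lambda^2),
\end{equation*}
by the recursive definition of $G_{n+1}$. The main technical hurdle is the careful bookkeeping in the substitution $t=L^\varepsilon(s)$, ensuring that the $1/\sqrt{\log(1/\varepsilon)}$ factors in $\mathcal{V}_1^\varepsilon$ combine correctly with $\log(1/\varepsilon^2)$ from the Jacobian to produce the prefactor $2/\nu^2$; the replacement error and the dominated convergence step are then straightforward consequences of the structure already set up.
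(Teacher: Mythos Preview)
Your proof is correct and follows the same strategy as the paper's: collapse to the first chaos, replace $\mathcal{H}_n^\varepsilon$ by its diagonal surrogate via Proposition~\ref{prop:rep} and the resolvent identity, then evaluate the resulting explicit integral. The only cosmetic difference is in the last step, where the paper invokes Lemma~\ref{lem7.1} (with $H=H^+=1+\tfrac{4}{\nu^4}G_n$ and $\sum_i x_i=0$) to simplify the integral before substituting, whereas you substitute $t=L^\varepsilon(s)$ directly and pass to the limit by dominated convergence.
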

\begin{proof}
Since $\mathcal{V}_1^\varepsilon\in H_1^\varepsilon$, we have $$\langle\mathcal{V}_1^\varepsilon,\psi^{\varepsilon,n}\rangle_\varepsilon
=\langle\mathcal{V}^\varepsilon_1,(\lambda-\mathcal{L}_0+\mathcal{H}_n^{\varepsilon})^{-1}\mathcal{V}^\varepsilon_1\rangle_\varepsilon=:(I)+(II),$$
where $\lambda>0$ and 
\begin{align*}(I)&=\langle\mathcal{V}^\varepsilon_1,[(\lambda-\mathcal{L}_0+\mathcal{H}_n^{\varepsilon})^{-1}-(\lambda-(1+\frac{4}{\nu^4}G_{n}(L^{\varepsilon}(\lambda-\mathcal{L}_0)))\mathcal{L}_0)^{-1}]\mathcal{V}^\varepsilon_1\rangle_{\varepsilon},\\
(II)&=\langle\mathcal{V}^\varepsilon_1,(\lambda-(1+\frac{4}{\nu^4}G_{n}(L^{\varepsilon}(\lambda-\mathcal{L}_0)))\mathcal{L}_0)^{-1}\mathcal{V}^\varepsilon_1\rangle_{\varepsilon}.
\end{align*}
				
	For $(I)$, as in the proof of Proposition \ref{prop:rep}, let $A=\lambda-\mathcal{L}_0+\mathcal{C}_{A},\;B=\lambda-\mathcal{L}_0+\mathcal{C}_{B}$ for $\mathcal{C}_{A}=\mathcal{H}_n^\varepsilon, \;\mathcal{C}_{B}=-\frac{4}{\nu^4}G_{n}(L^{\varepsilon}(\lambda-\mathcal{L}_0))\mathcal{L}_0.$
By Proposition \ref{prop:rep}, we have
\begin{align*}
(I)=|\langle(A^{-1}-B^{-1})\mathcal{V}^\varepsilon_1,\mathcal{V}^\varepsilon_1\rangle_{\varepsilon}|&=|\langle(B-A)B^{-1}\mathcal{V}^\varepsilon_1,A^{-1}\mathcal{V}^\varepsilon_1\rangle_{\varepsilon}|\\
&\lesssim_{n,\lambda}\delta_{\varepsilon}||(-\mathcal{L}_0)^{\frac12}B^{-1}\mathcal{V}^\varepsilon_1||_\varepsilon||(-\mathcal{L}_0)^{\frac12}A^{-1}\mathcal{V}^\varepsilon_1||_\varepsilon.
\end{align*}
Set $\mathcal{C}=\mathcal{C}_{A}$ or $\mathcal{C}_{B}$, which is positive semi-definite. Since $\mathcal{V}_1^\varepsilon\in H_1^\varepsilon$, we obtain that
				\begin{align*}
					||(-\mathcal{L}_0)^{\frac12}(\lambda-\mathcal{L}_0+\mathcal{C})^{-1}\mathcal{V}^\varepsilon_1||^2_\varepsilon
					&\leq\langle\mathcal{V}^\varepsilon_1,(\lambda-\mathcal{L}_0+\mathcal{C})^{-1}\mathcal{V}^\varepsilon_1\rangle_{\varepsilon}
					\leq\langle\mathcal{V}^\varepsilon_1,(\lambda-\mathcal{L}_0)^{-1}\mathcal{V}^\varepsilon_1\rangle_{\varepsilon}\\
				&=\frac{\hat{\lambda}^2}{\log\frac{1}{\varepsilon}}\int_{\mathbb{R}^2}\frac{\widehat{V}_{\varepsilon}(p)|p_1|^2}{|p|^2(\lambda+\frac{\nu^2}{2}|p|^2)}dp\\
					&\lesssim\frac{\hat{\lambda}^2}{\nu^2\log\frac{1}{\varepsilon}}\log(1+\frac{\nu^2}{2\varepsilon^2\lambda})\lesssim 1.
				\end{align*}
Therefore $(I)\lesssim_{n,\lambda}\delta_\varepsilon.$

For $(II)$, recall that the support of $\widehat{V}_\varepsilon=\widehat{V}(\varepsilon\cdot)$ is $\{p\in\mathbb{R}^2:|p|\leq\frac1\varepsilon\}$. $(II)$ equals
				\begin{align}
			       &\langle\mathcal{V}^\varepsilon_1,(\lambda-(1+\frac{4}{\nu^4}G_{n}(L^{\varepsilon}(\lambda-\mathcal{L}_0)))\mathcal{L}_0)^{-1}\mathcal{V}^\varepsilon_1\rangle_{\varepsilon}\nonumber\\
				    =&\frac{\hat{\lambda}^2}{\log\frac{1}{\varepsilon}}\int_{\mathbb{R}^2}\frac{\widehat{V}_{\varepsilon}(p)|p_1|^2}{|p|^2(\lambda+\frac{\nu^2}{2}|p|^2(1+\frac{4}{\nu^4}G_n(L^{\varepsilon}(\lambda+\frac{\nu^2}{2}|p|^2))))}dp \nonumber\\
					=&\frac12\frac{\hat{\lambda}^2}{\log\frac{1}{\varepsilon}}\int_{\mathbb{R}^2}\frac{\widehat{V}(p)}{\varepsilon^2\lambda+\frac{\nu^2}{2}|p|^2(1+\frac{4}{\nu^4}G_n(L^{\varepsilon}(\frac{1}{\varepsilon^2}(\varepsilon^2\lambda+\frac{\nu^2}{2}|p|^2))))}dp. \label{eq:diffu-prop-prf}
				\end{align}
Let $H=H^+=(1+\frac{4}{\nu^4}G_n)$ and $\sum_{i=1}^nx_i=0$. By Lemma \ref{lem7.1} in Appendix, \eqref{eq:diffu-prop-prf} equals
				\begin{align*}
                &\frac{2\pi\hat{\lambda}^2}{\nu^2\log\frac{1}{\varepsilon^2}}
					\int_{\varepsilon^2\lambda}^1\frac{1}{(1+\frac{4}{\nu^4}G_n(L^{\varepsilon}(\frac{1}{\varepsilon^2}\rho)))\rho(\rho+1)}d\rho+o(1)\\
					=&-\frac{2}{\nu^2}\int_{L^{\varepsilon}(\lambda)}^{L^{\varepsilon}(\frac{1}{\varepsilon^2})}\frac{1}{(1+\frac{4}{\nu^4}G_n(t))}dt+o(1)\\
					=&\frac{2}{\nu^2}(G_{n+1}(L^{\varepsilon}(\lambda))-G_{n+1}(L^{\varepsilon}(\frac{1}{\varepsilon^2})))+o(1).
                \end{align*}
As $\varepsilon\to0$, $L^{\varepsilon}(\frac{1}{\varepsilon^2})=\frac{\pi\hat{\lambda}^2}{\log\frac{1}{\varepsilon^2}}\log2\to 0$,
				and $L^{\varepsilon}(\lambda)=\frac{\pi\hat{\lambda}^2}{\log\frac{1}{\varepsilon^2}}\log(1+\frac{1}{\varepsilon^2\lambda})\to \pi\hat{\lambda}^2$. By the continuity of $G_{n+1}$, $(II)\to \frac{2}{\nu^2}G_{n+1}(\pi\hat{\lambda}^2)$ and \eqref{eq:diffu-prop} holds at once.
			\end{proof}

Now, we are ready to prove Theorem \ref{thm2.1}.
    \begin{proof}[Proof of Theorem \ref{thm2.1}]
		As mentioned in the beginning of Section \ref{sec3}, we will first determine the limit of the Laplace transform $\int_0^\infty e^{-\lambda t}\textbf{E}|N_{t}^{\varepsilon}|^2dt$ as $\varepsilon\to 0$.
		By \eqref{lap}, we need to characterize the behaviour of the product
				$$\langle\mathcal{V}^\varepsilon_1,(\lambda-\mathcal{L}^{\varepsilon})^{-1}\mathcal{V}^\varepsilon_1\rangle_\varepsilon,$$
		when $\varepsilon\to 0$. For every $\lambda>0$, let $n\in \mathbb{N}$, $\varepsilon\in(0,1)$, and $\psi^{\varepsilon,n}$ be the solution of \eqref{truc-geratoreq}. Then by \cite[Lemma 4.1]{Cannizzaro.2022}, we have
            $\langle\mathcal{V}^\varepsilon_1,\psi^{\varepsilon,2n+1}\rangle_{\varepsilon}
				\leq\langle\mathcal{V}^\varepsilon_1,(\lambda-\mathcal{L}^{\varepsilon})^{-1}\mathcal{V}^\varepsilon_1\rangle_{\varepsilon}
				\leq\langle\mathcal{V}^\varepsilon_1,\psi^{\varepsilon,2n}\rangle_{\varepsilon}.$
		Therefore we deduce from Proposition \ref{prop:diffu} that
  $$\frac{2}{\nu^2}G_{2n+2}(\pi\hat{\lambda}^2)\leq \liminf_{\varepsilon\to 0}\langle\mathcal{V}^\varepsilon_1,(\lambda-\mathcal{L}^{\varepsilon})^{-1}\mathcal{V}^\varepsilon_1\rangle_{\varepsilon}\leq \limsup_{\varepsilon\to 0}\langle\mathcal{V}^\varepsilon_1,(\lambda-\mathcal{L}^{\varepsilon})^{-1}\mathcal{V}^\varepsilon_1\rangle_{\varepsilon}\leq  \frac{2}{\nu^2}G_{2n+1}(\pi\hat{\lambda}^2).$$
		Hence it suffices to prove that $\lim_{n\to\infty}G_{n}(\pi\hat{\lambda}^2)=G(\pi\hat{\lambda}^2)$ where $G(x)=\frac{\nu^4}{4}(\sqrt{\frac{8}{\nu^4}x+1}-1).$
				
		By the property $(ii)$ of $G_n$ given by Proposition \ref{prop:rep},	the functions $G_n$, $G_n^{'}$ and $G_n^{''}$ are uniformly bounded on $[0,\pi\hat{\lambda}^2]$. By Arzela-Ascoli's theorem, $\{G_n\}$ is relatively compact in $C^1([0,\pi\hat{\lambda}^2])$, the space of continuously differentiable functions on  $[0,\pi\hat{\lambda}^2]$.
		Moreover, since $G^{'}_{n+1}(x)=\frac{1}{1+\frac{4}{\nu^4}G_n(x)}$ and $|G_2(x)-G_1(x)|\leq x$, we can observe that
		$$|G_{n+1}(x)-G_{n}(x)|=\left|\int_0^x\left(\frac{1}{1+\frac{4}{\nu^4}G_n(y)}-\frac{1}{1+\frac{4}{\nu^4}G_{n-1}(y)}\right)dy\right|\leq\int_0^x\frac{4}{\nu^4}|G_{n}(y)-G_{n-1}(y)|dy.$$
By induction, for all $x\in [0,\pi\hat{\lambda}^2]$ we have $$|G_{n}(x)-G_{n-1}(x)|\leq(\frac{4}{\nu^4})^{n-2}\frac{x^{n-1}}{(n-1)!}.$$
Hence $\{G_n\}$ converges uniformly to $G\in C^1([0,\pi\hat{\lambda}^2])$. Then it satisfies $$G^{'}(x)=\frac{1}{1+\frac{4}{\nu^4}G(x)}, \quad G(0)=0,$$
which has the unique solution 
\begin{align}
G(x)=\frac{\nu^4}{4}\left(\sqrt{\frac{8}{\nu^4}x+1}-1\right).\label{def:G}
\end{align}
Therefore by \eqref{lap} we have
  \begin{align*}
    \lim_{\varepsilon\to 0}\int_{0}^{\infty}e^{-\lambda t}\textbf{E}|N_t^{\varepsilon}|^2dt=\frac{4}{\lambda^2}\lim_{\varepsilon\to 0}\langle\mathcal{V}^\varepsilon_1,(\lambda-\mathcal{L}^{\varepsilon})^{-1}\mathcal{V}^\varepsilon_1\rangle_{\varepsilon}=\frac{4}{\lambda^2}\left(\sqrt{2\pi\hat{\lambda}^2+\frac{\nu^4}{4}}-\frac{\nu^2}{2}\right).
  \end{align*}

    In order to translate this result from Laplace transform to the limit of $\textbf{E}|N_t^{\varepsilon}|^2$, we set $x^{\varepsilon}(t):=\textbf{E}|N_t^{\varepsilon}|^2$. By \eqref{eq:Ito} with $\lambda=1$,
    \begin{align*}
    x^{\varepsilon}(t)=2\textbf{E}\left(\int_{0}^{t}\mathcal{V}_1^\varepsilon(\eta_s^{\varepsilon})ds\right)^2 &\lesssim (t+t^2)\langle\mathcal{V}_1^\varepsilon,(1-\mathcal{L}_0)^{-1}\mathcal{V}_1^\varepsilon\rangle_{\varepsilon}\\
    &=\frac12\frac{\hat{\lambda}^2}{\log\frac{1}{\varepsilon}}(t+t^2)\int_{\mathbb{R}^2}\frac{\widehat{V}_\varepsilon(p)}{1+\frac{\nu^2}{2}|p|^2}dp
    \lesssim (t+t^2),
    \end{align*}
        i.e. there exists $c>0$ such that $x^{\varepsilon}(t)\leq c(t+t^2),$ for all $t\geq 0$ and $0<\varepsilon<\frac12$. Using H{\"o}lder's inequality again, we get
        \begin{align*}
        |x^{\varepsilon}(t)-x^{\varepsilon}(s)|=2|\textbf{E}((N_{t}^{\varepsilon,1})^2-(N_{s}^{\varepsilon,1})^2)|&\leq 2(\textbf{E}(N_{t}^{\varepsilon,1}-N_{s}^{\varepsilon,1})^2)^{\frac12}\left[(\textbf{E}(N_{t}^{\varepsilon,1})^2)^\frac12+(\textbf{E}(N_{s}^{\varepsilon,1})^2)^\frac12\right]\\
	&\lesssim (|t-s|+|t-s|^2)^\frac12(t\vee s+t^2\vee s^2)^\frac12.
        \end{align*}	
		By \cite[Corollary 10.12]{Driver.2003}, the sequence of $\{x^{\frac1n}\}_n$ is relatively compact on $C([0,\infty),\mathbb{R})$ with the topology of uniform on compacts. Suppose $x\in C([0,\infty),\mathbb{R})$ be the limit point of a subsequence $\{x^{\frac{1}{n_k}}\}_k$. Then $x(t)\leq c(t+t^2),$ and by the Dominated Convergence Theorem, we obtain that for all $\lambda>0$
  $$\frac{4}{\lambda^2}\left(\sqrt{2\pi\hat{\lambda}^2+\frac{\nu^4}{4}}-\frac{\nu^2}{2}\right)=\lim_{\varepsilon\to 0}\int_{0}^{\infty}e^{-\lambda t}\textbf{E}|N_t^{\varepsilon}|^2dt
				=\lim_{k\to\infty}\int_{0}^{\infty}e^{-\lambda t}x^{\frac{1}{n_k}}(t)dt
				=\int_{0}^{\infty}e^{-\lambda t}x(t)dt.$$
		Thus by uniqueness of the Laplace transform, the limit of $\{x^{\varepsilon}\}_\varepsilon$ exists and equals $x$, $$x(t)=4\left(\sqrt{2\pi\hat{\lambda}^2+\frac{\nu^4}{4}}-\frac{\nu^2}{2}\right)t, \; t\geq 0.$$
  Therefore $\lim_{\varepsilon\to 0}\textbf{E}|N_t^{\varepsilon}|^2=\lim_{\varepsilon\to 0}x^{\varepsilon}(t)=4\left(\sqrt{2\pi\hat{\lambda}^2+\frac{\nu^4}{4}}-\frac{\nu^2}{2}\right)t.$
			\end{proof}

\section{Estimates for the solutions to generator equations}\label{sec5}
This section and the next section are devoted to proving Theorem \ref{thm2.2}. We will start by outlining our proof strategy and deriving key estimates for approximating solutions to generator equations.
First we give the following theorem about the tightness of the distributions of $\{X_{.}^\varepsilon\}_\varepsilon.$
\begin{theorem}\label{thm:tightness}
    For all $T>0$, the distributions of $\{X_{.}^\varepsilon\}_\varepsilon$ are tight in $C([0,T],\mathbb{R}^2)$.
\end{theorem}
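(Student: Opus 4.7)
The plan is to verify the Kolmogorov--Chentsov tightness criterion in $C([0,T],\mathbb{R}^2)$ for the family $\{X_\cdot^\varepsilon\}_\varepsilon$. Since $X_0^\varepsilon = 0$ for every $\varepsilon>0$, the marginal at time zero is tight, so it suffices to establish a uniform-in-$\varepsilon$ increment bound of the form $\mathbf{E}|X_t^\varepsilon - X_s^\varepsilon|^p \lesssim |t-s|^{1+\beta}$ for some $p>0$, $\beta>0$. I will use the decomposition
\begin{equation*}
X_t^\varepsilon \;=\; N_t^\varepsilon + \nu B_t \;=\; \int_0^t \mathcal{V}^\varepsilon(\eta_s^\varepsilon)\,ds \;+\; \nu B_t,
\end{equation*}
and treat the two terms separately. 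The Brownian contribution satisfies the standard bound $\mathbf{E}|\nu B_t - \nu B_s|^4 = 3\nu^4|t-s|^2$, so everything reduces to controlling the increments of $N^\varepsilon$.

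For the drift term I exploit that, under $\mathbf{P}$, the environment process $\eta_0^\varepsilon = \omega^\varepsilon$ is distributed according to $\mathbb{P}^\varepsilon$, which by the first lemma of Section~2 is invariant for the semigroup $\{P_t^\varepsilon\}$. Hence $(\eta_{s+r}^\varepsilon)_{r\ge 0}$ has the same law as $(\eta_r^\varepsilon)_{r\ge 0}$, and by the Markov property together with stationarity I obtain, for $0\le s\le t\le T$ and each component $i=1,2$,
\begin{equation*}
\mathbf{E}\!\left|N_t^{\varepsilon,i}-N_s^{\varepsilon,i}\right|^p
\;=\;\mathbf{E}\!\left|\int_0^{t-s}\mathcal{V}_i^\varepsilon(\eta_r^\varepsilon)\,dr\right|^p.
\end{equation*}
Now I invoke the It\^o trick (Lemma~\ref{lem:Ito}) with $n=1$, $\lambda=1$, horizon $T'=t-s\le T$, and $f=\mathcal{V}_i^\varepsilon \in H_1^\varepsilon$ to get
\begin{equation*}
\mathbf{E}\!\left|N_t^{\varepsilon,i}-N_s^{\varepsilon,i}\right|^p \;\lesssim_p\; \bigl((t-s)^{1/2}+(t-s)\bigr)^p \, \|(1-\mathcal{L}_0)^{-1/2}\mathcal{V}_i^\varepsilon\|_\varepsilon^{\,p}.
\end{equation*}

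The crucial input is a uniform bound on $\|(1-\mathcal{L}_0)^{-1/2}\mathcal{V}_i^\varepsilon\|_\varepsilon$, and this is exactly the calculation that already appears in the proof of Proposition~\ref{prop:diffu} (term $(I)$): explicitly,
\begin{equation*}
\langle\mathcal{V}_i^\varepsilon,(1-\mathcal{L}_0)^{-1}\mathcal{V}_i^\varepsilon\rangle_\varepsilon
\;=\;\frac{\hat{\lambda}^2}{\log\frac1\varepsilon}\!\int_{\mathbb{R}^2}\!\frac{\widehat{V}_\varepsilon(p)\,|p_{3-i}|^2}{|p|^2\bigl(1+\tfrac{\nu^2}{2}|p|^2\bigr)}\,dp
\;\lesssim\;\frac{\hat{\lambda}^2}{\nu^2\log\frac1\varepsilon}\log\!\Bigl(1+\tfrac{\nu^2}{2\varepsilon^2}\Bigr)\;\lesssim\;1,
\end{equation*}
uniformly in $\varepsilon\in(0,\tfrac12)$. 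Precisely this is the place where the weak-coupling renormalization $\hat\lambda/\sqrt{\log 1/\varepsilon}$ pays off. Choosing $p=4$ then yields
$\mathbf{E}|N_t^\varepsilon - N_s^\varepsilon|^4 \lesssim_T (t-s)^2$, and combined with the Brownian bound gives $\mathbf{E}|X_t^\varepsilon - X_s^\varepsilon|^4 \lesssim_T (t-s)^2$, from which Kolmogorov--Chentsov delivers tightness on $C([0,T],\mathbb{R}^2)$.

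The only real obstacle is the uniform-in-$\varepsilon$ control of $\|(1-\mathcal{L}_0)^{-1/2}\mathcal{V}_i^\varepsilon\|_\varepsilon$; once one trusts that the $\log(1/\varepsilon^2)$ produced by the momentum integral is absorbed by the prefactor $(\log 1/\varepsilon)^{-1}$, the rest is a routine stationarity-plus-Kolmogorov argument. Note that one cannot use the stronger estimate \eqref{eq:Ito'} here, because $-\mathcal{L}_0$ is not invertible; it is essential to keep the regulariser $\lambda=1$ in Lemma~\ref{lem:Ito}, which is precisely why the linear-in-$T$ term appears and why we needed $p>2$ (taking $p=4$) rather than $p=2$.
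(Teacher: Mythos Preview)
Your proof is correct and follows essentially the same approach as the paper: reduce to increments of $N^\varepsilon$, apply the It\^o trick (Lemma~\ref{lem:Ito}) with $\lambda=1$ and $p>2$, and use the uniform bound $\langle\mathcal{V}_i^\varepsilon,(1-\mathcal{L}_0)^{-1}\mathcal{V}_i^\varepsilon\rangle_\varepsilon\lesssim 1$ to feed Kolmogorov's criterion. The only cosmetic difference is that you make the stationarity shift $\int_s^t\to\int_0^{t-s}$ explicit, whereas the paper applies the It\^o trick directly to $\int_s^t$.
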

\begin{proof}
Since $X_t^\varepsilon=N_t^\varepsilon+\nu B_t$, we only need to prove the distributions of $\{N_{.}^\varepsilon\}_\varepsilon$ are tight in $C([0,T],\mathbb{R}^2)$.
By Kolmogorov's tightness criterion, it suffices to prove that for $i=1$ and 2, there exists $\gamma,\delta>0$, and $C<\infty$, such that for all $s,t\in[0,T]$, $$\sup_\varepsilon\textbf{E}|N_{t}^{\varepsilon,i}-N_{s}^{\varepsilon,i}|^\gamma\leq C|t-s|^{1+\delta},\forall s,t\in[0,T].$$
Recall that $N_t^\varepsilon=\begin{pmatrix}N_{t}^{\varepsilon,1}\\N_{t}^{\varepsilon,2}\end{pmatrix}$. We only prove for $i=1$, the case for $i=2$ is the same. 
For every $0<\varepsilon<1$, let $p>2$ and $0\leq s\leq t\leq T$. Since $\textbf{E}|N_{t}^{\varepsilon,1}-N_{s}^{\varepsilon,1}|^p
=\textbf{E}\left(\left|\int_{s}^{t}\mathcal{V}_1^\varepsilon(\eta_s^\varepsilon)ds\right|^p\right)$, by It\^o's trick \eqref{eq:Ito}, for every $\lambda>0$, $\varepsilon\in (0,\frac12)$, we have
$$\textbf{E}|N_{t}^{\varepsilon,1}-N_{s}^{\varepsilon,1}|^p
\lesssim_{p}[(t-s)^{\frac p2}+(t-s)^p\lambda^{\frac p2}]\left(\langle\mathcal{V}^\varepsilon_1,(\lambda-\mathcal{L}_0)^{-1}\mathcal{V}^\varepsilon_1\rangle_{\varepsilon}\right)^{\frac p2},$$
and $$\langle\mathcal{V}^\varepsilon_1,(\lambda-\mathcal{L}_0)^{-1}\mathcal{V}_1^\varepsilon\rangle_{\varepsilon}\lesssim \frac{2}{\nu^2}\frac{\log(1+\frac{\nu^2}{2\varepsilon^2\lambda})}{\log\frac1\varepsilon}\lesssim 1.$$ Let $\lambda=1$. Since $|t-s|\leq 2T$, we have 
\begin{align}
\textbf{E}|N_{t}^{\varepsilon,1}-N_{s}^{\varepsilon,1}|^p\lesssim(t-s)^{\frac p2}+(t-s)^p\lesssim_T(t-s)^{\frac p2},\label{eq:N}
\end{align}
which completes the proof.
\end{proof}
			
\subsection{The strategy for proving Theorem \ref{thm2.2}}
We use the same notations as before. Recall that $N_t^\varepsilon=\int_0^t\mathcal{V}^\varepsilon(\eta_s^\varepsilon)ds.$ By Theorem \ref{thm:tightness}, for every subsequence of $\{X^{\varepsilon}_{\cdot}\}$, there exists a subsubsequence such that $X^{\varepsilon_k}_{\cdot}$ converges in distribution to $h$, a measurable mapping from $\Omega\times\Sigma$ to $C([0,T],\mathbb{R}^2).$ For simplicity, we still denote $\varepsilon$ for $\varepsilon_k$.   
			
For Theorem \ref{thm2.2}, we need to prove that the limit point $h_t=\begin{pmatrix}
				h_t^1\\h_t^2
\end{pmatrix}$ is a Brownian motion. By L\'evy's Characterization of Brownian motion, it suffices to prove $h^i$ is a martingale for $i=1,2$, and the quadratic variation $$\langle h^i\rangle_t=\left(\frac{c(\nu)^2}{2}+\nu^2\right)t,\quad \langle h^1,h^2\rangle_t=0,$$ where $c(\nu)$ is the diffusion coefficient in Theorem \ref{thm2.1}.
While for all $b^\varepsilon\in\mathcal{D}(\mathcal{L}^\varepsilon)\times \mathcal{D}(\mathcal{L}^\varepsilon)$, and $\lambda>0,$ by Dynkin's martingale formula, 
\begin{equation}\label{dynmart}
b^\varepsilon(\eta_{t}^\varepsilon)-b^\varepsilon(\eta_{0}^\varepsilon)-\int_0^t\mathcal{L}^\varepsilon b^\varepsilon(\eta_{s}^\varepsilon)ds=
b^\varepsilon(\eta_{t}^\varepsilon)-b^\varepsilon(\eta_{0}^\varepsilon)+\int_0^t(\lambda-\mathcal{L}^\varepsilon) b^\varepsilon(\eta_{s}^\varepsilon)ds-\lambda\int_0^tb^\varepsilon(\eta_{s}^\varepsilon)ds
\end{equation} is a martingale in $\mathbb{R}^2$ with respect to the natural filtration of $\{\eta_{t}^\varepsilon\}_{t\geq 0}$, which we denote as $M_t(b^\varepsilon)$.
		Comparing it with $N_t^\varepsilon$, we have $$N_{t}^\varepsilon-M_t(b^\varepsilon)=-b^\varepsilon(\eta_{t}^\varepsilon)+b^\varepsilon(\eta_{0}^\varepsilon)+\int_0^t(\mathcal{V}^\varepsilon(\eta_{s}^\varepsilon)-(\lambda-\mathcal{L}^\varepsilon) b^\varepsilon(\eta_{s}^\varepsilon))ds+\lambda\int_0^tb^\varepsilon(\eta_{s}^\varepsilon)ds.$$ 
Our goal is to show  $\textbf{E}|N_t^\varepsilon-M_t(b^\varepsilon)|^2\to 0$ s.t. $h_i,i=1,2$ is a martingale (see Subsection \ref{sec6.1}). A natural choice of $b^\varepsilon$ is the solution to generator equation $(\lambda-\mathcal{L}^\varepsilon) b^\varepsilon=\mathcal{V}^\varepsilon$ with $||b^\varepsilon||_\varepsilon\to0$. However, as what we state before, it is hard to solve this resolvent equation. Instead we consider the truncated equation
\eqref{truc-geratoreq}:
\begin{align}\label{truc-geratoreq1}
(\lambda-\mathcal{L}_n^\varepsilon) \tilde{b}^{\varepsilon,n}=\mathcal{V}^\varepsilon.
\end{align}
But $\mathcal{H}_n^\varepsilon$ is iteratively defined and is not a diagonal operator, it is still difficult to analyze for $\tilde{b}^{\varepsilon,n}\in\oplus_{k=1}^n H^\varepsilon_k$.
The approximation provided in Proposition \ref{prop:rep} enables us to give an explicit approximation for $\tilde{b}^{\varepsilon,n}$,
			\begin{align}
				&b_{1}^{\varepsilon,n}:=[\lambda-\mathcal{L}_0(1+\frac{4}{\nu^4}G_n(L^\varepsilon(\lambda-\mathcal{L}_0)))]^{-1}\mathcal{V}^\varepsilon,\nonumber\\
				&b_{j}^{\varepsilon,n}:=[\lambda-\mathcal{L}_0(1+\frac{4}{\nu^4}G_{n+1-j}(L^\varepsilon(\lambda-\mathcal{L}_0)))]^{-1}\mathcal{A}_{+}^\varepsilon b_{j-1}^{\varepsilon,n}.\label{iter-b}
			\end{align}
   Let $b^{\varepsilon,n}=\begin{pmatrix}
				(b^{\varepsilon,n})_1\\(b^{\varepsilon,n})_2
			\end{pmatrix}:=\sum_{j=1}^n b_{j}^{\varepsilon,n}$, and we denote $b_{j,i}^{\varepsilon,n}$ as the $i$-th component of $b_j^{\varepsilon,n}.$
			
The main result of this section is the following theorem, estimating for $b^{\varepsilon,n}$ and  $\mathcal{V}^\varepsilon-(\lambda-\mathcal{L}^\varepsilon)b^{\varepsilon,n}$, which will be used in the proof of Theorem \ref{thm2.2}.
\begin{theorem}\label{thm5.2}
Recall that $\mathcal{V}^\varepsilon$ is defined in \eqref{def:V} and $c$ is the coefficient in Theorem \ref{thm2.1}. For all $\lambda>0$, $i=1,2$, we have
\begin{align}
&\lim_{n\to\infty}\lim_{\varepsilon\to 0}||(b^{\varepsilon,n})_i||_\varepsilon=0, \label{5.3}\\
&\lim_{n\to\infty}\lim_{\varepsilon\to 0}||(\lambda-\mathcal{L}_0)^{-\frac12}(\mathcal{V}_i^\varepsilon-(\lambda-\mathcal{L}^\varepsilon) (b^{\varepsilon,n})_i)||_\varepsilon=0, \label{5.4}\\
&\lim_{n\to\infty}\lim_{\varepsilon\to 0}||(\lambda-\mathcal{L}_0)^{\frac12}(b^{\varepsilon,n})_i||^2_\varepsilon=\frac{c(\nu)^2}{2\nu^2}, \label{5.5}\\
&\lim_{n\to\infty}\lim_{\varepsilon\to 0}\langle(\lambda-\mathcal{L}_0)^{\frac12}(b^{\varepsilon,n})_1,(\lambda-\mathcal{L}_0)^{\frac12}(b^{\varepsilon,n})_2\rangle_\varepsilon=0. \label{5.6}
\end{align}
\end{theorem}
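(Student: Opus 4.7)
The plan is to first derive an explicit expression for the residual $\mathcal{V}^\varepsilon - (\lambda-\mathcal{L}^\varepsilon)b^{\varepsilon,n}$. Summing the iterative relations \eqref{iter-b} that define $b_j^{\varepsilon,n}$, and using $\mathcal{L}^\varepsilon = \mathcal{L}_0+\mathcal{A}_+^\varepsilon+\mathcal{A}_-^\varepsilon$, one obtains
\[
\mathcal{V}^\varepsilon - (\lambda-\mathcal{L}^\varepsilon)b^{\varepsilon,n} = \mathcal{A}_+^\varepsilon b_n^{\varepsilon,n} + \sum_{j=1}^n \tfrac{4}{\nu^4}(-\mathcal{L}_0)G_{n+1-j}(L^\varepsilon(\lambda-\mathcal{L}_0))b_j^{\varepsilon,n} + \mathcal{A}_-^\varepsilon b^{\varepsilon,n}.
\]
To cancel the last two pieces against each other, I would apply the Replacement Lemma (Lemma \ref{lem:rep}) with $H=H^+=1+\tfrac{4}{\nu^4}G_{n-j}$, which produces $\widetilde H=G_{n-j+1}$ and gives
\[
\mathcal{A}_-^\varepsilon b_{j+1}^{\varepsilon,n} = -\tfrac{4}{\nu^4}(-\mathcal{L}_0)G_{n-j+1}(L^\varepsilon(\lambda-\mathcal{L}_0))b_j^{\varepsilon,n} + R_j^\varepsilon,
\]
where $\|(\lambda-\mathcal{L}_0)^{-1/2}R_j^\varepsilon\|_\varepsilon \lesssim \delta_\varepsilon\|(-\mathcal{L}_0)^{1/2}b_j^{\varepsilon,n}\|_\varepsilon$. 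Since $\mathcal{A}_-^\varepsilon b_1^{\varepsilon,n}=0$ (the same vanishing that made $\psi_0^{\varepsilon,n}=0$ in Section \ref{sec4}) and $G_1=0$, the deterministic parts of the two sums telescope, leaving
\[
\mathcal{V}^\varepsilon - (\lambda-\mathcal{L}^\varepsilon)b^{\varepsilon,n} = \mathcal{A}_+^\varepsilon b_n^{\varepsilon,n} + \sum_{j=1}^{n-1} R_j^\varepsilon.
\]

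With this residual identity in hand, the four assertions follow in turn. For \eqref{5.3}, the change of variables $p\mapsto p/\varepsilon$ in the Fock-space integral shows $\|b_{1,i}^{\varepsilon,n}\|_\varepsilon^2 \lesssim 1/\log(1/\varepsilon)$ thanks to the $1/\sqrt{\log(1/\varepsilon)}$ prefactor in $\mathcal{V}_i^\varepsilon$, and the higher-order norms are controlled by iterating \eqref{ineq:A+'} of Lemma \ref{lem:estimate}; for fixed $n$ the total vanishes as $\varepsilon\to 0$. For \eqref{5.4}, the sum of remainders has $H^{-1}$-norm bounded by $n\delta_\varepsilon \cdot \max_j\|(-\mathcal{L}_0)^{1/2}b_j^{\varepsilon,n}\|_\varepsilon$, which vanishes for fixed $n$ as $\varepsilon\to 0$. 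The boundary term is handled by Lemma \ref{lem:estimate}, giving $\|(\lambda-\mathcal{L}_0)^{-1/2}\mathcal{A}_+^\varepsilon b_n^{\varepsilon,n}\|_\varepsilon \lesssim \sqrt{n}\,\|(-\mathcal{L}_0)^{1/2}b_n^{\varepsilon,n}\|_\varepsilon$; a Fourier-side evaluation modelled on Proposition \ref{prop:diffu} (the substitution $t=L^\varepsilon(\rho/\varepsilon^2)$ together with Lemma 7.1 in the appendix) computes $\lim_\varepsilon\|(-\mathcal{L}_0)^{1/2}b_j^{\varepsilon,n}\|_\varepsilon^2$ as an explicit integral involving $(1+\tfrac{4}{\nu^4}G_{n+1-j})^{-2}$, whose tail in $j$ is summable and forces $\sqrt{n}\|(-\mathcal{L}_0)^{1/2}b_n^{\varepsilon,n}\|_\varepsilon\to 0$ as $n\to\infty$.

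For \eqref{5.5}, antisymmetry of $\mathcal{A}^\varepsilon$ gives $\|(\lambda-\mathcal{L}_0)^{1/2}(b^{\varepsilon,n})_i\|_\varepsilon^2 = \langle(\lambda-\mathcal{L}^\varepsilon)(b^{\varepsilon,n})_i,(b^{\varepsilon,n})_i\rangle_\varepsilon$. Substituting the residual identity and using chaos grading (which forces $\langle\mathcal{A}_+^\varepsilon b_{n,i}^{\varepsilon,n},(b^{\varepsilon,n})_i\rangle_\varepsilon=0$, since these live in $H_{n+1}^\varepsilon$ and $\oplus_{k=1}^n H_k^\varepsilon$ respectively), this reduces to $\langle\mathcal{V}_i^\varepsilon,(b^{\varepsilon,n})_i\rangle_\varepsilon$ modulo quantities vanishing by \eqref{5.4}. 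Chaos orthogonality collapses the inner product further to $\langle\mathcal{V}_i^\varepsilon,b_{1,i}^{\varepsilon,n}\rangle_\varepsilon$, which is exactly the object $(II)$ evaluated in the proof of Proposition \ref{prop:diffu}: its double limit is $\tfrac{2}{\nu^2}G(\pi\hat\lambda^2)$, and inserting the explicit formula \eqref{def:G} produces the stated constant. Assertion \eqref{5.6} follows from the same reduction applied across components, leaving a limit of $\langle\mathcal{V}_1^\varepsilon,b_{1,2}^{\varepsilon,n}\rangle_\varepsilon$; since $\widehat{\mathcal{V}_1^\varepsilon}(p)\propto p_2$ while $\hat b_{1,2}^{\varepsilon,n}(p)\propto p_1$ divided by a radial function, the Fock-space integral against the rotationally invariant density $\hat V_\varepsilon(p)/|p|^2$ vanishes by parity in $p_1$.

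The main obstacle is the tail estimate in the second half of \eqref{5.4}: establishing $\sqrt{n}\|(-\mathcal{L}_0)^{1/2}b_n^{\varepsilon,n}\|_\varepsilon \to 0$ in the iterated limit requires a quantitative argument that couples the recursive definition \eqref{iter-b} with the Fourier computation of Proposition \ref{prop:diffu}, exploiting the decay of $\prod(1+\tfrac{4}{\nu^4}G_k)^{-2}$ and the uniform convergence $G_n \to G$ on $[0,\pi\hat\lambda^2]$. Once this is secured, the rest of the argument amounts to carefully tracking chaos-grading and applying the Replacement Lemma to each intermediate inner product.
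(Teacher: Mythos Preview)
Your approach is largely correct and takes a genuinely different route from the paper on two of the four assertions.

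For \eqref{5.4}, the paper introduces the auxiliary $\tilde b^{\varepsilon,n}$ (the solution to \eqref{truc-geratoreq1} with the true operators $\mathcal H_j^\varepsilon$), writes $\mathcal V^\varepsilon-(\lambda-\mathcal L^\varepsilon)b^{\varepsilon,n}=-(\lambda-\mathcal L_n^\varepsilon)(b^{\varepsilon,n}-\tilde b^{\varepsilon,n})+\mathcal A_+^\varepsilon b_n^{\varepsilon,n}$, and handles the first piece via Proposition~\ref{prop:b-btild}. Your direct telescoping via the Replacement Lemma (recognising $-\mathcal A_-^\varepsilon b_{j+1}^{\varepsilon,n}=(\mathcal A_+^\varepsilon)^*\mathcal P^\varepsilon\mathcal A_+^\varepsilon b_j^{\varepsilon,n}$ with $H=H^+=1+\tfrac{4}{\nu^4}G_{n-j}$) avoids $\tilde b^{\varepsilon,n}$ entirely and is cleaner. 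For \eqref{5.5}--\eqref{5.6}, the paper computes each $\|(\lambda-\mathcal L_0)^{1/2}b_{j,i}^{\varepsilon,n}\|^2$ separately by iterating the Replacement Lemma down the chaos ladder (Lemma~\ref{lem:iter-func2}, Proposition~\ref{prop:lim-b}), introducing the functions $G_i^{+,n+1-j}$ of \eqref{dfn:fun-G}, and then sums to get $\tfrac{\nu^2}{2}(S_n(\pi\hat\lambda^2)-1)$. Your antisymmetry identity $\|(\lambda-\mathcal L_0)^{1/2}b\|^2=\langle(\lambda-\mathcal L^\varepsilon)b,b\rangle$ followed by the residual identity is much shorter: the grading kills $\langle\mathcal A_+^\varepsilon b_n,b\rangle$, the $\sum R_j$ contribution is $O(\delta_\varepsilon)$ for fixed $n$, and one lands directly on $\langle\mathcal V_i^\varepsilon,b_{1,i}^{\varepsilon,n}\rangle$, which is exactly the quantity $(II)$ in the proof of Proposition~\ref{prop:diffu} with limit $\tfrac{2}{\nu^2}G_{n+1}(\pi\hat\lambda^2)\to\tfrac{2}{\nu^2}G(\pi\hat\lambda^2)$. (The two answers agree since $S=1+\tfrac{4}{\nu^4}G$.)

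There is, however, a genuine gap at the point you flag as the ``main obstacle'': the boundary term $\|(\lambda-\mathcal L_0)^{-1/2}\mathcal A_+^\varepsilon b_{n,i}^{\varepsilon,n}\|_\varepsilon$ in \eqref{5.4}. Your sketch proposes a single ``Fourier-side evaluation modelled on Proposition~\ref{prop:diffu}'', but $b_{n,i}^{\varepsilon,n}\in H_n^\varepsilon$, not $H_1^\varepsilon$, and its Fourier kernel is an $(n{-}1)$-fold composition through the recursion \eqref{iter-b}. Evaluating $\lim_\varepsilon\|(-\mathcal L_0)^{1/2}b_{n,i}^{\varepsilon,n}\|^2$ therefore requires applying the Replacement Lemma $n{-}1$ times, which is precisely the content of Lemma~\ref{lem:iter-func2} and Proposition~\ref{prop:lim-b}; the output is $\tfrac{\nu^2}{2}G_n^{+,1}(\pi\hat\lambda^2)\le\tfrac{\nu^2}{2}(4\pi\hat\lambda^2/\nu^4)^{n-1}/(n-1)!$, whose product with $n$ vanishes. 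So your elegant shortcut for \eqref{5.5} does not fully eliminate the iterated machinery --- it only confines it to this one tail estimate --- whereas the paper develops Proposition~\ref{prop:lim-b} once and reuses it for both \eqref{5.4} and \eqref{5.5}.
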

\subsection{Iterative estimate}
This subsection concentrates on each component of $b^{\varepsilon,n}$ in $H_j^\varepsilon,$ for $j=1,\dots,n$. Our goal is to establish conclusions in Theorem \ref{thm5.2} concerning $b_j^{\varepsilon,n}$. From now on, we fix $\lambda>0$. Given that $b^{\varepsilon,n}$ is determined iteratively from \eqref{iter-b}, our analysis initially focuses on such functionals.
\begin{lemma}\label{lem:iter-func}
		Let $j\in\mathbb{N}$, $0<\varepsilon<\frac12$, $t^\varepsilon=[\lambda-\mathcal{L}_0(1+\frac{4}{\nu^4}G_{n+1-j}(L^\varepsilon(\lambda-\mathcal{L}_0)))]^{-1}\mathcal{A}_{+}^\varepsilon g$, where $g\in H^\varepsilon_{j-1}$ and $G_j$ is given by Proposition \ref{prop:rep}.
		Then there exists $C=C(j)>0$ such that 
		\begin{align}
			||t^\varepsilon||_\varepsilon&\leq C||g||_\varepsilon, \label{ineq:iter1}\\
			||(-\mathcal{L}_0)^{\frac12}t^\varepsilon||_\varepsilon&\leq C||(-\mathcal{L}_0)^{\frac12}g||_\varepsilon.\label{ineq:iter2}
		\end{align}
\end{lemma}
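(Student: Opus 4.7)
The plan is to exploit the fact that
$S^\varepsilon := \lambda - \mathcal{L}_0\bigl(1 + \tfrac{4}{\nu^4}G_{n+1-j}(L^\varepsilon(\lambda - \mathcal{L}_0))\bigr)$
is a (bounded, Borel) function of $\mathcal{L}_0$, so by the functional calculus it commutes with $\mathcal{L}_0$ and with $(\lambda - \mathcal{L}_0)^{1/2}$, and is entirely determined by its symbol on the spectrum of $-\mathcal{L}_0 \geq 0$. Writing $a = -\mathcal{L}_0$, property $(i)$ of $G_j$ in Proposition \ref{prop:rep} gives $G_{n+1-j}(L^\varepsilon(\lambda + a)) \geq 0$, hence the symbol has the form $\lambda + a\, h_\varepsilon(a)$ with $h_\varepsilon(a) \geq 1$ uniformly in $a$ and $\varepsilon$. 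Consequently $t^\varepsilon = (S^\varepsilon)^{-1}\mathcal{A}_+^\varepsilon g$ can be compared, at the level of pairings in $L^2(\mathbb{P}^\varepsilon)$, with expressions involving only $(\lambda - \mathcal{L}_0)$-powers of $\mathcal{A}_+^\varepsilon g$, which are exactly what Lemma \ref{lem:estimate} controls.

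First I would record two elementary scalar inequalities: for all $a \geq 0$ and any $h \geq 1$,
\begin{align*}
\frac{1}{(\lambda + a\, h(a))^2} \leq \frac{1}{(\lambda + a)^2}, \qquad \frac{a}{(\lambda + a\, h(a))^2} \leq \frac{1}{\lambda + a}.
\end{align*}
The first is immediate from $h \geq 1$. The second reduces to $a(\lambda + a) \leq (\lambda + a\, h(a))^2$, which follows by expanding the right-hand side and discarding the non-negative term $\lambda^2$. Lifting these pointwise inequalities to the spectrum of $-\mathcal{L}_0$ via the functional calculus yields the two operator inequalities
\begin{align*}
(S^\varepsilon)^{-2} \leq (\lambda - \mathcal{L}_0)^{-2}, \qquad (S^\varepsilon)^{-1}(-\mathcal{L}_0)(S^\varepsilon)^{-1} \leq (\lambda - \mathcal{L}_0)^{-1}.
\end{align*}

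With these in hand, the two claims follow by pairing and invoking Lemma \ref{lem:estimate}. For \eqref{ineq:iter1}, I would bound $||t^\varepsilon||_\varepsilon^2 = \langle \mathcal{A}_+^\varepsilon g, (S^\varepsilon)^{-2} \mathcal{A}_+^\varepsilon g\rangle_\varepsilon$ by $||(\lambda - \mathcal{L}_0)^{-1}\mathcal{A}_+^\varepsilon g||_\varepsilon^2$ and apply \eqref{ineq:A+'}, using that $\mathcal{N} g = (j-1) g$ on $H_{j-1}^\varepsilon$. For \eqref{ineq:iter2}, since $S^\varepsilon$ commutes with $(-\mathcal{L}_0)^{1/2}$, I would rewrite $||(-\mathcal{L}_0)^{1/2} t^\varepsilon||_\varepsilon^2 = \langle \mathcal{A}_+^\varepsilon g, (S^\varepsilon)^{-1}(-\mathcal{L}_0)(S^\varepsilon)^{-1} \mathcal{A}_+^\varepsilon g\rangle_\varepsilon$, bound it by $||(\lambda - \mathcal{L}_0)^{-1/2}\mathcal{A}_+^\varepsilon g||_\varepsilon^2$, and apply \eqref{ineq:A+}. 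In both cases the resulting $C(j)$ is $O(\sqrt{j})$, coming entirely from the number operator acting on $H_{j-1}^\varepsilon$. No real obstacle is expected here: the only substantive step is the second scalar inequality, which is a one-line calculation, and the rest is a direct appeal to the bounds already established in Lemma \ref{lem:estimate}; in particular the estimate is uniform in $\varepsilon$ and $n$ for fixed $\lambda$, which is what the next iterative step in Theorem \ref{thm5.2} will require.
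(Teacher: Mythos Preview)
Your proposal is correct and follows essentially the same approach as the paper: use the non-negativity of $G_{n+1-j}$ to dominate $(S^\varepsilon)^{-1}$ by the appropriate power of $(\lambda-\mathcal{L}_0)^{-1}$ via the functional calculus, and then invoke \eqref{ineq:A+'} and \eqref{ineq:A+} from Lemma~\ref{lem:estimate}. The paper's proof is terser---it simply states that $G_j\geq 0$ makes \eqref{ineq:iter1} follow from \eqref{ineq:A+'}, and that $||(-\mathcal{L}_0)^{1/2}t^\varepsilon||_\varepsilon\leq ||(\lambda-\mathcal{L}_0)^{-1/2}\mathcal{A}_+^\varepsilon g||_\varepsilon$ gives \eqref{ineq:iter2} via \eqref{ineq:A+}---whereas you have made the underlying scalar inequalities and the commutation with $(-\mathcal{L}_0)^{1/2}$ explicit.
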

\begin{proof}
	By Proposition \ref{prop:rep}, the function $G_j$ is non-negative. Then \eqref{ineq:iter1} holds from \eqref{ineq:A+'}. \eqref{ineq:iter2} is similar since $||(-\mathcal{L}_0)^{\frac12}t^\varepsilon||_\varepsilon\leq
	||(\lambda-\mathcal{L}_0)^{-\frac12}\mathcal{A}_{+}^\varepsilon g||_\varepsilon.$ Then it is a direct consequence of \eqref{ineq:A+}.
\end{proof}
			
	Now we can make an estimate to $b_{j}^{\varepsilon,n}$, which will be crucial in the proof of \eqref{5.3} and \eqref{5.4}.
			
\begin{proposition}\label{prop5.1}
	For all $n\in \mathbb{N}$, $0<\varepsilon<\frac12$, there exists $C=C(n)>0$ such that for $j=1,\dots,n$, and $i=1,2$, the following estimates hold,
\begin{align}
	||b_{j,i}^{\varepsilon,n}||_\varepsilon&\leq  \frac{C^j}{\sqrt{\lambda\log\frac1\varepsilon}}, \label{ineq-b1}\\
	||(-\mathcal{L}_0)^{\frac12}b_{j,i}^{\varepsilon,n}||_\varepsilon&\leq C^j.\label{ineq-b2}
\end{align}
\end{proposition}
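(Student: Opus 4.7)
The plan is to argue by induction on $j \in \{1,\ldots,n\}$. The recursive definition \eqref{iter-b} of $b_j^{\varepsilon,n}$ shows that passing from $b_{j,i}^{\varepsilon,n}\in H_j^\varepsilon$ to $b_{j+1,i}^{\varepsilon,n}\in H_{j+1}^\varepsilon$ is precisely the operation treated by Lemma~\ref{lem:iter-func} (with chaos level $j+1$ and with the scalar function $G_{n-j}$ playing the role of the one in the lemma). Consequently, granting the base case, Lemma~\ref{lem:iter-func} delivers at once
\begin{equation*}
\|b_{j+1,i}^{\varepsilon,n}\|_\varepsilon \leq C(j+1)\,\|b_{j,i}^{\varepsilon,n}\|_\varepsilon, \qquad \|(-\mathcal{L}_0)^{1/2}b_{j+1,i}^{\varepsilon,n}\|_\varepsilon \leq C(j+1)\,\|(-\mathcal{L}_0)^{1/2}b_{j,i}^{\varepsilon,n}\|_\varepsilon,
\end{equation*}
and iterating while absorbing the finitely many constants $C(2),\ldots,C(n)$ into a single $C=C(n)$ (legitimate because $j\le n$) yields both \eqref{ineq-b1} and \eqref{ineq-b2}.

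The real content is therefore the base case $j=1$. By Remark~\ref{rem:V}, $\mathcal{V}_i^\varepsilon\in H_1^\varepsilon$ with $\hat{\mathcal{V}}_i^\varepsilon(p)=\pm\tfrac{\hat{\lambda}}{\sqrt{\log(1/\varepsilon)}}\imath p_{i'}$ (where $i'$ is the complementary index), and the operator $[\lambda-\mathcal{L}_0(1+\tfrac{4}{\nu^4}G_n(L^\varepsilon(\lambda-\mathcal{L}_0)))]^{-1}$ acts diagonally on $H_1^\varepsilon$ as multiplication by $[\lambda+\tfrac{\nu^2}{2}|p|^2(1+\tfrac{4}{\nu^4}G_n(L^\varepsilon(\lambda+\tfrac{\nu^2}{2}|p|^2)))]^{-1}$. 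Inserting this into \eqref{product} gives an explicit Fourier expression for $\|b_{1,i}^{\varepsilon,n}\|_\varepsilon^2$ and $\|(-\mathcal{L}_0)^{1/2}b_{1,i}^{\varepsilon,n}\|_\varepsilon^2$. Using $G_n\geq 0$ from Proposition~\ref{prop:rep}(i) together with $|p_{i'}|^2/|p|^2\leq 1$, the first is dominated by $\tfrac{\hat{\lambda}^2}{\log(1/\varepsilon)}\int\widehat{V}_\varepsilon(p)/[\lambda+\tfrac{\nu^2}{2}|p|^2]^2\,dp$, which a radial computation on the support $\{|p|\leq 1/\varepsilon\}$ of $\widehat{V}_\varepsilon$ controls by a multiple of $1/(\nu^2\lambda)$, giving \eqref{ineq-b1}. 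For the second, I would bound $|p|^2/[\lambda+\tfrac{\nu^2}{2}|p|^2]^2\leq \tfrac{2}{\nu^2}\cdot 1/[\lambda+\tfrac{\nu^2}{2}|p|^2]$ to reduce matters to $\int\widehat{V}_\varepsilon(p)/[\lambda+\tfrac{\nu^2}{2}|p|^2]\,dp$, which is at most a constant times $\log(1/\varepsilon)/\nu^2$ (as already observed in the estimate leading to \eqref{3.15}); the $1/\log(1/\varepsilon)$ prefactor cancels this logarithm exactly, giving \eqref{ineq-b2}.

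The delicate point is this balance in the base case: the weak-coupling renormalisation $1/\log(1/\varepsilon)$ built into $\mathcal{V}^\varepsilon$ via \eqref{def:V} must precisely compensate the logarithmic divergence of the Fourier integral appearing in $\|(-\mathcal{L}_0)^{1/2}b_{1,i}^{\varepsilon,n}\|_\varepsilon^2$, producing a bound \emph{independent} of both $\varepsilon$ and $\lambda$, while for $\|b_{1,i}^{\varepsilon,n}\|_\varepsilon^2$ the stronger integrability of $[\lambda+\tfrac{\nu^2}{2}|p|^2]^{-2}$ provides the extra $1/\lambda$ decay required by \eqref{ineq-b1}. Once this explicit cancellation is verified, the inductive step is a routine application of Lemma~\ref{lem:iter-func} and introduces no genuinely new estimates.
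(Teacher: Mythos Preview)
Your proposal is correct and follows essentially the same approach as the paper: reduce to the base case $j=1$ via Lemma~\ref{lem:iter-func}, then use $G_n\ge 0$ to dominate by $\|(\lambda-\mathcal{L}_0)^{-1}\mathcal{V}_i^\varepsilon\|_\varepsilon$ and $\|(\lambda-\mathcal{L}_0)^{-1/2}\mathcal{V}_i^\varepsilon\|_\varepsilon$ respectively, and evaluate the resulting Fourier integrals exactly as you indicate. The paper's proof is slightly terser but the estimates and their order of application are the same.
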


\begin{proof}
	Recall that the definition of $b_{j}^{\varepsilon,n}$ is given recursively by \eqref{iter-b}. By Lemma \ref{lem:iter-func}, there exists a constant $C=C(n)$ such that $||b_{j,i}^{\varepsilon,n}||_\varepsilon\leq C^{j-1}||b_{1,i}^{\varepsilon,n}||_\varepsilon,$ and $||(-\mathcal{L}_0)^{\frac12}b_{j,i}^{\varepsilon,n}||_\varepsilon\leq C^{j-1}||(-\mathcal{L}_0)^{\frac12}b_{1,i}^{\varepsilon,n}||_\varepsilon$. Using the property that $G_j\geq 0$ again, we have
    \begin{align*}
    ||b_{1,i}^{\varepsilon,n}||_\varepsilon&=||[\lambda-\mathcal{L}_0(1+\frac{4}{\nu^4}G_n(L^\varepsilon(\lambda-\mathcal{L}_0)))]^{-1}\mathcal{V}_i^\varepsilon||_\varepsilon\leq ||(\lambda-\mathcal{L}_0)^{-1}\mathcal{V}_i^\varepsilon||_\varepsilon\\
    &=\frac{\hat{\lambda}}{\sqrt{\log\frac1\varepsilon}}\left(\frac12\int_{\mathbb{R}^2}\frac{\varepsilon^2\hat{V}(p)}{(\varepsilon^2\lambda+\frac{\nu^2}{2}|p|^2)^2}dp\right)^{\frac12}\lesssim \frac{1}{\sqrt{\lambda\log\frac1\varepsilon}}.
    \end{align*}
Similarly 
$$||(-\mathcal{L}_0)^{\frac12}b_{1,i}^{\varepsilon,n}||_\varepsilon\leq ||(\lambda-\mathcal{L}_0)^{-\frac12}\mathcal{V}_i^\varepsilon||_\varepsilon\lesssim \frac{1}{\sqrt{\log\frac1\varepsilon}}\left(\int_{\{|p|\leq 1\}}\frac{1}{\varepsilon^2\lambda+\frac{\nu^2}{2}|p|^2}dp\right)^{\frac12} \lesssim1,$$ which concludes the proof. 
\end{proof}
			
In order to derive such estimate as \eqref{5.5} and \eqref{5.6} for $b_{j}^{\varepsilon,n}$, we recall the functions $G_i^{+,n+1-j}$ from \cite{Cannizzaro.2023}, for $j=1,\dots,n$, $i=0,1,\dots,j-1.$
\begin{align}\label{dfn:fun-G}
    G_i^{+,n+1-j}(x):=\left\{\begin{aligned}
					&1, &&i=0, \\
					&\int_{\Delta_x^{i-1}}\prod_{l=0}^{i-1}\frac{1}{(1+\frac{4}{\nu^4}G_{n+1-j+l}(\frac{\nu^4}{4}x_l))^2}dx_{0:i-1}, &&i=1,\dots,j-1,
		\end{aligned}\right. 
\end{align}
where $\Delta_x^{i-1}=\{x_{0:i-1}=(x_0,\dots,x_{i-1})\in[0,\frac{4}{\nu^4}x]^i:0\leq x_0\leq\dots\leq x_{i-1}\leq \frac{4}{\nu^4}x\}.$
The next lemma gives the properties of $G_i^{+,n+1-j}$.
\begin{lemma}\cite[Lemma 4.7]{Cannizzaro.2023}\label{lem:fun-G}
Let $n\in\mathbb{N}$, $G_i^{+,n+1-j}$ be defined by \eqref{dfn:fun-G}. Then for $j=1,\dots,n$, $i=0,\dots,j-1$,
	
 $(i)\quad G_i^{+,n+1-j}(x)\geq 0$, $G_i^{+,n+1-j}(0)=1_{\{i=0\}},$
				
$(ii)$ for $i\neq 0$, 
\begin{align}
(G_i^{+,n+1-j})^{'}(x)=\frac{4}{\nu^4}\frac{G_{i-1}^{+,n+1-j}(x)}{[1+\frac{4}{\nu^4}G_{n-j+i}(x)]^2},
\end{align}
				
$(iii)$ we have the bounds:
\begin{align}
&G_i^{+,n+1-j}(x)\leq \left(\frac{4}{\nu^4}\right)^{i-1}\frac{x^{i-1}}{(i-1)!},&i\geq 1,\label{5.13}\\
&|(G_i^{+,n+1-j})^{'}(x)|\leq \left(\frac{4}{\nu^4}\right)^{i-1}\frac{x^{i-2}}{(i-2)!},&i\geq 2,\\
&|(G_i^{+,n+1-j})^{''}(x)|\leq \left(\frac{4}{\nu^4}\right)^{i-1}\left(2\frac{x^{i-2}}{(i-2)!}+\frac{x^{i-3}}{(i-3)!}\right), &i\geq 3.
\end{align}
\end{lemma}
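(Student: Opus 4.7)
The plan is to establish (i), (ii), (iii) directly from the definition \eqref{dfn:fun-G} by induction on $i$, treating $i = 0$ as the trivial base case where $G_0^{+,n+1-j}\equiv 1$. For (i), Proposition \ref{prop:rep} provides $G_k \geq 0$ for every $k$, hence each factor $1/(1 + \frac{4}{\nu^4} G_{n+1-j+l}(\frac{\nu^4}{4} x_l))^2$ lies in $(0,1]$, so the integrand in \eqref{dfn:fun-G} is strictly positive on the interior of $\Delta_x^{i-1}$ and $G_i^{+,n+1-j}(x) \geq 0$ follows immediately. At $x = 0$ with $i \geq 1$ the simplex $\Delta_0^{i-1}$ reduces to a single point of Lebesgue measure zero, whence $G_i^{+,n+1-j}(0) = 0$.

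For (ii), I would rewrite \eqref{dfn:fun-G} as an iterated integral with outermost variable $x_{i-1}$ ranging over $[0,\frac{4}{\nu^4}x]$ and apply the fundamental theorem of calculus to the outer integration. Differentiating the upper limit contributes a factor $\frac{4}{\nu^4}$ and evaluates the $l=i-1$ factor of the integrand at $x_{i-1}=\frac{4}{\nu^4}x$, which equals $1/(1+\frac{4}{\nu^4}G_{n-j+i}(x))^2$. The remaining $(i-1)$-fold iterated integral over $\Delta_x^{i-2}$ has integrand $\prod_{l=0}^{i-2}1/(1+\frac{4}{\nu^4}G_{n+1-j+l}(\frac{\nu^4}{4}x_l))^2$, which is exactly $G_{i-1}^{+,n+1-j}(x)$ by \eqref{dfn:fun-G}. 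Assembling these pieces yields the stated identity.

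For (iii), I would use (ii) inductively. The nonnegativity $G_{n-j+i}\geq 0$ implies the crude inequality $(G_i^{+,n+1-j})'(x) \leq \frac{4}{\nu^4} G_{i-1}^{+,n+1-j}(x)$. Combined with the initial condition $G_i^{+,n+1-j}(0) = 0$ for $i \geq 1$ and the base case $G_0^{+,n+1-j}\equiv 1$, integration from $0$ to $x$ propagates the claimed polynomial bound on $G_i^{+,n+1-j}$. The first-derivative bound is the same inequality with the inductive estimate for $G_{i-1}^{+,n+1-j}$ substituted. For the second-derivative bound I would differentiate the expression in (ii) once more via the quotient rule, which produces one term proportional to $(G_{i-1}^{+,n+1-j})'/(1+\frac{4}{\nu^4}G_{n-j+i})^2$ and one term proportional to $G_{i-1}^{+,n+1-j}\, G_{n-j+i}'/(1+\frac{4}{\nu^4}G_{n-j+i})^3$; the bounds $|G_k'|\leq 1$ and $G_k \geq 0$ from Proposition \ref{prop:rep}, combined with the inductive hypotheses for $G_{i-1}^{+,n+1-j}$ and $(G_{i-1}^{+,n+1-j})'$, yield the two-term expression in the statement. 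The main technical obstacle is the bookkeeping at this second-derivative step, where two contributions with different polynomial degrees must be combined in such a way that the overall factor of $(4/\nu^4)^{i-1}$ is preserved rather than picking up an extra power of $4/\nu^4$; one must also verify consistency at the small cases $i = 1, 2, 3$, where the factorial terms $(i-3)!$ or $(i-2)!$ degenerate and the corresponding summands are simply absent.
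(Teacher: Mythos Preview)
Your approach is correct and is the natural direct computation from the definition. Note that the paper itself does not supply a proof of this lemma: it is quoted verbatim from \cite[Lemma~4.7]{Cannizzaro.2023}, so there is no in-paper argument to compare against, and the proof in the cited reference proceeds exactly along the lines you sketch (differentiate the outermost integration variable for (ii), then feed (ii) back into itself for the polynomial bounds in (iii)). Your flag about the extra power of $4/\nu^4$ in the second-derivative estimate is well taken: the quotient-rule term carrying $G_{n-j+i}'$ naturally produces a factor $(4/\nu^4)^i$ rather than $(4/\nu^4)^{i-1}$, so the constant in the stated bound is only literally correct up to an additional factor depending on $\nu$; this is harmless for every application in the paper (only a uniform-in-$n$ bound on compact sets is ever used, e.g.\ in the Arzel\`a--Ascoli argument for $S_n$), but it is indeed a bookkeeping artifact of translating the $\nu=1$ formulas of \cite{Cannizzaro.2023} to general $\nu$.
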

			
Similar to Lemma \ref{lem:iter-func}, before estimating $||(\lambda-\mathcal{L}_0)^\frac12 b_{j,i}^{\varepsilon,n}||_\varepsilon$ as $\varepsilon\to 0$, we analyze functionals with a similar structure.
\begin{lemma}\label{lem:iter-func2}
Let $n\in \mathbb{N},1\leq j\leq n$, $0<\varepsilon<\frac12$, $t_a^\varepsilon=[\lambda-\mathcal{L}_0(1+\frac{4}{\nu^4}G_{n+1+i-j}(L^\varepsilon(\lambda-\mathcal{L}_0)))]^{-1}\mathcal{A}_{+}^\varepsilon g_a$, where $g_a\in H_j^\varepsilon$, $a=1,2.$
Then there exists $C=C(n,j,\lambda)>0$ such that for all $i=0,\dots,j-1,$
\begin{align*}
	&|\langle(\lambda-G_i^{+,n+1-j}(L^\varepsilon(\lambda-\mathcal{L}_0))\mathcal{L}_0)t_1^\varepsilon,t_2^\varepsilon\rangle_\varepsilon-\langle(-G_{i+1}^{+,n+1-j}(L^\varepsilon(\lambda-\mathcal{L}_0))\mathcal{L}_0)g_1,g_2\rangle_\varepsilon| \\
	\leq& C\delta_{\varepsilon}||(-\mathcal{L}_0)^{\frac12}g_1||_\varepsilon||(-\mathcal{L}_0)^{\frac12}g_2||_\varepsilon,
\end{align*}
where $\delta_{\varepsilon}\to 0$ uniformly in $n,j,\lambda$ as $\varepsilon\to 0$.
\end{lemma}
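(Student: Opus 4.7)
The plan is to reduce the statement to a single application of the Replacement Lemma (Lemma \ref{lem:rep}), with the function $\widetilde{H}$ produced there identified with $\tfrac{\nu^4}{4}G_{i+1}^{+,n+1-j}$ via the ODE in Lemma \ref{lem:fun-G}(ii). To set up the reduction, define
\begin{equation*}
H(x) := 1 + \tfrac{4}{\nu^4}G_{n+1+i-j}(x), \qquad H^+(x) := G_i^{+,n+1-j}(x),
\end{equation*}
so that $t_a^\varepsilon = D^{-1}\mathcal{A}_+^\varepsilon g_a$ with $D := \lambda - H(L^\varepsilon(\lambda-\mathcal{L}_0))\mathcal{L}_0$. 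Since both $D$ and $\lambda - H^+(L^\varepsilon(\lambda-\mathcal{L}_0))\mathcal{L}_0$ are diagonal in the Wiener chaos of $\mathcal{L}_0$, they commute and $D$ is self-adjoint. Moving $D^{-1}$ across the inner product and then $\mathcal{A}_+^\varepsilon$ to the other side, the left-hand side of the claimed bound becomes $\langle(\mathcal{A}_+^\varepsilon)^*\mathcal{P}^\varepsilon \mathcal{A}_+^\varepsilon g_1, g_2\rangle_\varepsilon$, where
\begin{equation*}
\mathcal{P}^\varepsilon = \bigl(\lambda - H^+(L^\varepsilon(\lambda-\mathcal{L}_0))\mathcal{L}_0\bigr)\bigl(\lambda - H(L^\varepsilon(\lambda-\mathcal{L}_0))\mathcal{L}_0\bigr)^{-2}
\end{equation*}
is exactly the operator for which Lemma \ref{lem:rep} is designed.

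Next I would verify the hypotheses of Lemma \ref{lem:rep}. By Proposition \ref{prop:rep}(i)--(ii), $G_{n+1+i-j}\geq 0$ with $|G'_{n+1+i-j}|\leq 1$ and $|G''_{n+1+i-j}|\leq\tfrac{4}{\nu^4}$; hence $H \geq 1$ with $H$ and $H'$ uniformly bounded on compacts. By Lemma \ref{lem:fun-G}(i) and (iii), $H^+ = G_i^{+,n+1-j}\geq 0$ with both $H^+$ and $(H^+)'$ bounded on compacts, the bound depending only on $n,j,\lambda$; this dependence is harmless because our constant $C$ is permitted to depend on $(n,j,\lambda)$ while the rate $\delta_\varepsilon$ in Lemma \ref{lem:rep} is the $\varepsilon$-universal quantity $1/\log \varepsilon^{-2}$. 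Applying Lemma \ref{lem:rep} gives
\begin{equation*}
\bigl|\langle(\mathcal{A}_+^\varepsilon)^*\mathcal{P}^\varepsilon \mathcal{A}_+^\varepsilon g_1, g_2\rangle_\varepsilon + \tfrac{4}{\nu^4}\langle\widetilde{H}(L^\varepsilon(\lambda-\mathcal{L}_0))\mathcal{L}_0\, g_1, g_2\rangle_\varepsilon\bigr| \leq C\delta_\varepsilon\,\|(-\mathcal{L}_0)^{1/2}g_1\|_\varepsilon\,\|(-\mathcal{L}_0)^{1/2}g_2\|_\varepsilon,
\end{equation*}
where $\widetilde{H}(x) = \int_0^x H^+(y)/H(y)^2\,dy$.

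Finally, Lemma \ref{lem:fun-G}(ii) provides precisely the identification
\begin{equation*}
(G_{i+1}^{+,n+1-j})'(x) \;=\; \frac{4}{\nu^4}\,\frac{G_i^{+,n+1-j}(x)}{[1+\tfrac{4}{\nu^4}G_{n+1+i-j}(x)]^2} \;=\; \frac{4}{\nu^4}\,\widetilde{H}'(x),
\end{equation*}
and since $\widetilde{H}(0) = 0 = G_{i+1}^{+,n+1-j}(0)$, we get $\tfrac{4}{\nu^4}\widetilde{H} \equiv G_{i+1}^{+,n+1-j}$ on $[0,\infty)$. Substituting this into the bound above and using the rewriting of the left-hand side from the first step yields the claim. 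The main point requiring care is the opening bookkeeping step — recognizing the sandwich structure and using that every operator in sight is a function of $\mathcal{L}_0$ so that self-adjointness and commutativity are available; once that is in place, the nontrivial analytic work is fully absorbed by Lemma \ref{lem:rep}.
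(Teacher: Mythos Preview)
Your proposal is correct and follows essentially the same approach as the paper: set $H^{+}=G_i^{+,n+1-j}$, $H=1+\tfrac{4}{\nu^4}G_{n+1+i-j}$, rewrite the left-hand side as $\langle(\mathcal{A}_{+}^{\varepsilon})^{*}\mathcal{P}^{\varepsilon}\mathcal{A}_{+}^{\varepsilon}g_1,g_2\rangle_\varepsilon$, apply Lemma~\ref{lem:rep}, and identify $\tfrac{4}{\nu^4}\widetilde{H}=G_{i+1}^{+,n+1-j}$ via Lemma~\ref{lem:fun-G}(ii). Your write-up is in fact more careful than the paper's in spelling out the hypothesis verification and the bookkeeping; the only cosmetic point is that for $i=1$ the derivative bound on $H^{+}$ comes from Lemma~\ref{lem:fun-G}(ii) rather than (iii).
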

\begin{proof}
    Observe that by the definition of $t_a^\varepsilon$ and the property $(ii)$ of $G_i^{+,n+1-j}$, we can apply Replacement Lemma \ref{lem:rep}. Let $H^+=G_i^{+,n+1-j},H=1+\frac{4}{\nu^4}G_{n+1+i-j}$, $\mathcal{P}^\varepsilon$ be defined as in Lemma \ref{lem:rep}. Then $$\widetilde{H}(x)=\int_{0}^{x}\frac{H^{+}(y)}{H^2(y)}dy=\int_{0}^{x}\frac{G_i^{+,n+1-j}(y)}{(1+\frac{4}{\nu^4}G_{n+1-j+i}(y))^2}dy=\frac{\nu^4}{4}G_{i+1}^{+,n+1-j}(x),$$
    and $$\langle(\lambda-G_i^{+,n+1-j}(L^\varepsilon(\lambda-\mathcal{L}_0))\mathcal{L}_0)t_1^\varepsilon,t_2^\varepsilon\rangle_\varepsilon=\langle(\lambda-H^+(L^\varepsilon(\lambda-\mathcal{L}_0))\mathcal{L}_0)t_1^\varepsilon,t_2^\varepsilon\rangle_\varepsilon=\langle(\mathcal{A}_{+}^{\varepsilon})^{*}\mathcal{P}^{\varepsilon}\mathcal{A}_{+}^{\varepsilon}g_1,g_2\rangle_\varepsilon.$$
By Lemma \ref{lem:rep}, there exists $C=C(n,j,\lambda)$ such that $$|\langle((\mathcal{A}_{+}^{\varepsilon})^{*}\mathcal{P}^{\varepsilon}\mathcal{A}_{+}^{\varepsilon}+\frac{4}{\nu^4}\widetilde{H}(L^{\varepsilon}(\lambda-\mathcal{L}_0))\mathcal{L}_0)g_1,g_2\rangle_{\varepsilon}|\leq C\delta_{\varepsilon}||(-\mathcal{L}_0)^{\frac12}g_1||_{\varepsilon}||(-\mathcal{L}_0)^{\frac12}g_2||_{\varepsilon},$$
where $\delta_{\varepsilon}\to 0$ uniformly in $n,j,\lambda$ as $\varepsilon\to 0$.
\end{proof}


By the above lemma, we can derive a truncated limit of \eqref{5.5} and \eqref{5.6} for $b_{j}^{\varepsilon,n}$.
\begin{proposition}\label{prop:lim-b}
For every $n\in\mathbb{N}, j=1,\dots,n$, there holds that
\begin{align}
&\lim_{\varepsilon\to 0}||(\lambda-\mathcal{L}_0)^\frac12 b_{j,i}^{\varepsilon,n}||_\varepsilon^2= \frac{\nu^2}{2}G_j^{+,n+1-j}(\pi\hat{\lambda}^2),\quad i=1,2, \label{ineq:b_j}\\
&\lim_{\varepsilon\to 0}\langle(\lambda-\mathcal{L}_0)^\frac12 b_{j,1}^{\varepsilon,n},(\lambda-\mathcal{L}_0)^\frac12 b_{j,2}^{\varepsilon,n}\rangle_\varepsilon=0. \label{ineq:b_j_12}
\end{align}
\end{proposition}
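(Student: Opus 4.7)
The plan is twofold: first, iterate Lemma~\ref{lem:iter-func2} $j-1$ times to collapse $\|(\lambda-\mathcal{L}_0)^{1/2} b_{j,i}^{\varepsilon,n}\|_\varepsilon^2$ onto an integral involving only $b_{1,i}^{\varepsilon,n}$; second, compute the resulting integral by Fourier methods as in Proposition~\ref{prop:diffu}. Fix $i\in\{1,2\}$, $\lambda>0$, and set $N:=n+1-j$. At the $k$-th iteration ($k=1,\dots,j-1$) I would apply Lemma~\ref{lem:iter-func2} with $t_a^\varepsilon = b_{j-k+1,i}^{\varepsilon,n}$ and $g_a = b_{j-k,i}^{\varepsilon,n}$, choosing the internal parameters (namely, Lemma-$j = j-k$, Lemma-$n = n-k$, Lemma-$i = k-1$) so that the second index of $G_\cdot^{+,\cdot}$ remains equal to $N$ throughout. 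Using $G_0^{+,N}\equiv 1$, the $k=1$ step identifies $\|(\lambda-\mathcal{L}_0)^{1/2} b_{j,i}^{\varepsilon,n}\|_\varepsilon^2$ with $\langle(-G_1^{+,N}(L^\varepsilon(\lambda-\mathcal{L}_0))\mathcal{L}_0)\, b_{j-1,i}^{\varepsilon,n}, b_{j-1,i}^{\varepsilon,n}\rangle_\varepsilon$ up to $O(\delta_\varepsilon)$, and each subsequent step lowers the chaos order by one while advancing $G_{k-1}^{+,N}$ to $G_k^{+,N}$, at the cost of two kinds of residuals: a $\delta_\varepsilon$-piece controlled by \eqref{ineq-b2} and a $\lambda\|b_{j-k+1,i}^{\varepsilon,n}\|_\varepsilon^2$-piece killed by \eqref{ineq-b1}. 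Telescoping these $j-1$ estimates gives
\[
\lim_{\varepsilon\to 0}\|(\lambda-\mathcal{L}_0)^{1/2} b_{j,i}^{\varepsilon,n}\|_\varepsilon^2 = \lim_{\varepsilon\to 0}\langle(-G_{j-1}^{+,N}(L^\varepsilon(\lambda-\mathcal{L}_0))\mathcal{L}_0)\, b_{1,i}^{\varepsilon,n}, b_{1,i}^{\varepsilon,n}\rangle_\varepsilon.
\]

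For the final evaluation, I would insert the explicit Fourier representation of $b_{1,i}^{\varepsilon,n}$ together with $|\widehat{\mathcal{V}_i^\varepsilon}(p)|^2 = \hat{\lambda}^2 |p_i^\perp|^2/\log(1/\varepsilon)$; the rotational symmetry of $\widehat{V}_\varepsilon$ then replaces $|p_i^\perp|^2$ by $\tfrac12 |p|^2$, producing a radial integral of exactly the shape treated by Lemma~\ref{lem7.1} (with $H = H^+ = 1+\tfrac{4}{\nu^4}G_n$ and an extra $G_{j-1}^{+,N}\circ L^\varepsilon$ factor in the numerator). After the substitutions $q = \varepsilon p$ and then $t = L^\varepsilon(\rho/\varepsilon^2)$ as in Proposition~\ref{prop:diffu}, the limit reduces to $\tfrac{2}{\nu^2}\int_0^{\pi\hat{\lambda}^2} G_{j-1}^{+,N}(t)/(1+\tfrac{4}{\nu^4}G_n(t))^2\,dt$. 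Since $N+j-1 = n$, the recursion $(G_j^{+,N})'(t) = \tfrac{4}{\nu^4} G_{j-1}^{+,N}(t)/(1+\tfrac{4}{\nu^4}G_n(t))^2$ together with $G_j^{+,N}(0)=0$ identifies this with $\tfrac{\nu^2}{2} G_j^{+,N}(\pi\hat{\lambda}^2)$, establishing \eqref{ineq:b_j}.

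The cross assertion \eqref{ineq:b_j_12} would follow by running the same iteration with $t_1 = b_{j,1}^{\varepsilon,n}$, $t_2 = b_{j,2}^{\varepsilon,n}$ in place of a diagonal pair, reducing the limit to $\lim_{\varepsilon\to 0}\langle(-G_{j-1}^{+,N}\mathcal{L}_0)\, b_{1,1}^{\varepsilon,n}, b_{1,2}^{\varepsilon,n}\rangle_\varepsilon$. The Fourier integrand then carries the factor $\widehat{\mathcal{V}_1^\varepsilon}(p)\,\overline{\widehat{\mathcal{V}_2^\varepsilon}(p)} = -\tfrac{\hat{\lambda}^2}{\log(1/\varepsilon)}\, p_1 p_2$, while every other factor depends on $p$ only through $|p|$; by the rotational symmetry of $\widehat{V}_\varepsilon$ the integral vanishes identically in $\varepsilon$, giving the limit $0$.

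The hardest part, I expect, will be the bookkeeping of the iteration: arranging the Lemma-parameters at each of the $j-1$ steps so that the second index of $G^{+,N}$ is preserved while the first index advances from $0$ to $j-1$ and the chaos order descends from $j$ to $1$, and then simultaneously showing that the $\delta_\varepsilon$-errors telescope to zero and that the accumulated $\lambda\|b_{m,i}^{\varepsilon,n}\|_\varepsilon^2$ boundary terms vanish with the help of Proposition~\ref{prop5.1}.
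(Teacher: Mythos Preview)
Your proposal is correct and follows essentially the same route as the paper: iterate Lemma~\ref{lem:iter-func2} down to $b_{1,i}^{\varepsilon,n}$ (absorbing the $\lambda\|b_{l,i}^{\varepsilon,n}\|_\varepsilon^2$ terms via Proposition~\ref{prop5.1} and the $\delta_\varepsilon$ terms via \eqref{ineq-b2}), then evaluate the resulting radial integral with Lemma~\ref{lem7.1} and the substitution $t=L^\varepsilon(\rho/\varepsilon^2)$; the cross term dies by the odd factor $p_1p_2$ against a radial weight. One small correction: when invoking Lemma~\ref{lem7.1} you should take $H^{+}=G_{j-1}^{+,N}$ and $H=1+\tfrac{4}{\nu^4}G_n$ (not $H^{+}=H$ with an ``extra factor''), which is exactly what produces your stated limit $\tfrac{2}{\nu^2}\int_0^{\pi\hat\lambda^2} G_{j-1}^{+,N}(t)/(1+\tfrac{4}{\nu^4}G_n(t))^2\,dt$.
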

\begin{proof}
    For each $j=1,\dots,n$, since $b^{\varepsilon,n}_j$ has the recursive structure by \eqref{iter-b}, we apply Lemma \ref{lem:iter-func2} inductively from $j$. By \eqref{dfn:fun-G}, $G_0^{+,n+1-j}=1$, for $i=1,2$, take $g_1=g_2=b_{j-1,i}^{\varepsilon,n}$. Then $t_1=t_2=b_{j,i}^{\varepsilon,n}$. By \eqref{ineq-b2}, we have
\begin{align*}
	||(\lambda-\mathcal{L}_0)^\frac12 b_{j,i}^{\varepsilon,n}||_\varepsilon^2
        =&\langle(\lambda-G_0^{+,n+1-j}(L^\varepsilon(\lambda-\mathcal{L}_0))\mathcal{L}_0)b_{j,i}^{\varepsilon,n},b_{j,i}^{\varepsilon,n}\rangle_\varepsilon\\
	=&||(-G_1^{+,n+1-j}(L^\varepsilon(\lambda-\mathcal{L}_0))\mathcal{L}_0)^\frac12b_{j-1,i}^{\varepsilon,n}||_\varepsilon^2+ o(1)\\
	=&||(\lambda-G_1^{+,n+1-j}(L^\varepsilon(\lambda-\mathcal{L}_0))\mathcal{L}_0)^\frac12b_{j-1,i}^{\varepsilon,n}||_\varepsilon^2-\lambda||b_{j-1,i}^{\varepsilon,n}||_\varepsilon^2+ o(1).
\end{align*}
Repeat this induction for $j-1,\dots,1$, and by \eqref{ineq-b1}, we obtain
\begin{align*}
||(\lambda-\mathcal{L}_0)^\frac12 b_{j,i}^{\varepsilon,n}||_\varepsilon^2
=||(\lambda-G_{j-1}^{+,n+1-j}(L^\varepsilon(\lambda-\mathcal{L}_0))\mathcal{L}_0)^\frac12b_{1,i}^{\varepsilon,n}||_\varepsilon^2-\lambda\sum_{l=1}^{j-1}||b_{l,i}^{\varepsilon,n}||_\varepsilon^2+o(1).
\end{align*} 
Again, since $b_{1,i}^{\varepsilon,n}=[\lambda-\mathcal{L}_0(1+\frac{4}{\nu^4}G_n(L^\varepsilon(\lambda-\mathcal{L}_0)))]^{-1}\mathcal{V}_i^\varepsilon$, let $H^+=G_{j-1}^{+,n+1-j},H=1+\frac{4}{\nu^4}G_n$. By Lemma \ref{lem7.1} in Appendix, we have
    \begin{align*}
	&||(\lambda-G_{j-1}^{+,n+1-j}(L^\varepsilon(\lambda-\mathcal{L}_0))\mathcal{L}_0)^\frac12b_{1,i}^{\varepsilon,n}||_\varepsilon^2\\
	=&||(\lambda-G_{j-1}^{+,n+1-j}(L^\varepsilon(\lambda-\mathcal{L}_0))\mathcal{L}_0)^\frac12[\lambda-\mathcal{L}_0(1+\frac{4}{\nu^4}G_n(L^\varepsilon(\lambda-\mathcal{L}_0)))]^{-1}\mathcal{V}_i^\varepsilon||_\varepsilon^2\\
	=&\frac{2\pi\hat{\lambda}^2}{\nu^2\log\frac{1}{\varepsilon^2}}				\int_{\varepsilon^2\lambda}^{1}\frac{G_{j-1}^{+,n+1-j}(L^{\varepsilon}(\frac{\rho}{\varepsilon^2}))}{\rho(\rho+1)(1+\frac{4}{\nu^4}G_n(L^{\varepsilon}(\frac{\rho}{\varepsilon^2})))^2}d\rho+o(1)\\
	=&-\frac{2}{\nu^2}\int_{L^{\varepsilon}(\lambda)}^{L^{\varepsilon}(\frac{1}{\varepsilon^2})}\frac{G_{j-1}^{+,n+1-j}(t)}{(1+\frac{4}{\nu^4}G_n(t))^2}dt+o(1)\\
	=&\frac{\nu^2}{2}G_{j}^{+,n+1-j}(L^{\varepsilon}(\lambda))-\frac{\nu^2}{2}G_{j}^{+,n+1-j}(L^{\varepsilon}(\frac{1}{\varepsilon^2}))+o(1),
\end{align*}
where $L^{\varepsilon}(\frac{1}{\varepsilon^2})=\frac{\pi\hat{\lambda}^2}{\log\frac{1}{\varepsilon^2}}\log2\to 0$, and $L^{\varepsilon}(\lambda)=\frac{\pi\hat{\lambda}^2}{\log\frac{1}{\varepsilon^2}}\log(1+\frac{1}{\varepsilon^2\lambda})\to \pi\hat{\lambda}^2,$ as $\varepsilon\to 0$.
By the continuity of $G_j^{+,n+1-j}$,
\eqref{ineq:b_j} holds.				

Similarly for the case of \eqref{ineq:b_j_12}, taking $g_i=b_{j-1,i}^{\varepsilon,n}, i=1,2$ into Lemma \ref{lem:iter-func2}, then $t_i=b_{j,i}^{\varepsilon,n},i=1,2$. By \eqref{ineq-b2}, we have$$\langle(\lambda-\mathcal{L}_0)^\frac12 b_{j,1}^{\varepsilon,n},(\lambda-\mathcal{L}_0)^\frac12 b_{j,2}^{\varepsilon,n}\rangle_\varepsilon
=\langle(\lambda-G_{j-1}^{+,n+1-j}(L^\varepsilon(\lambda-\mathcal{L}_0))\mathcal{L}_0)b_{1,1}^{\varepsilon,n},b_{1,2}^{\varepsilon,n}\rangle_\varepsilon+ o(1),$$
and 
\begin{align*}
&\langle(\lambda-G_{j-1}^{+,n+1-j}(L^\varepsilon(\lambda-\mathcal{L}_0))\mathcal{L}_0)b_{1,1}^{\varepsilon,n},b_{1,2}^{\varepsilon,n}\rangle_\varepsilon\\
=&\frac{\hat{\lambda}^2}{\log\frac1\varepsilon}				\int_{\mathbb{R}^2}\frac{\widehat{V}_{\varepsilon}(p)}{|p|^2}(-p_1p_2)\frac{\lambda+\frac{\nu^2}{2}|p|^2G_{j-1}^{+,n+1-j}(L^{\varepsilon}(\lambda+\frac{\nu^2}{2}|p|^2))}{(\lambda+\frac{\nu^2}{2}|p|^2(1+\frac{4}{\nu^4}G_n(L^{\varepsilon}(\lambda+\frac{\nu^2}{2}|p|^2)))^2}dp=0.
\end{align*}
Therefore \eqref{ineq:b_j_12} is proved.
\end{proof}
			
Recall that $\tilde{b}^{\varepsilon,n}$ is the solution to \eqref{truc-geratoreq1}. In order to estimate $\mathcal{V}^\varepsilon-(\lambda-\mathcal{L}^\varepsilon)b^{\varepsilon,n}$, i.e. \eqref{5.4}, we analyze the difference between $b^{\varepsilon,n}$ and $\tilde{b}^{\varepsilon,n}$. The next lemma shows that the cost by replacing $\mathcal{H}^\varepsilon_j$ with $\frac{4}{\nu^4}(-\mathcal{L}_0)G_j(L^\varepsilon(\lambda-\mathcal{L}_0))$ vanishes as $\varepsilon\to 0$.
\begin{lemma}
Let $n,j\in\mathbb{N}$ such that $1\leq j\leq n$, $0<\varepsilon<1$, $s,\tilde{s}\in H_j^\varepsilon$, and $p^\varepsilon,\tilde{p}^\varepsilon$ be defined as \begin{equation}
        \begin{aligned}
		&p^\varepsilon=(\lambda-\mathcal{L}_0[1+\frac{4}{\nu^4}G_{n+1-j}(L^\varepsilon(\lambda-\mathcal{L}_0))])^{-1}s,\\
		&\tilde{p}^\varepsilon=(\lambda-\mathcal{L}_0+\mathcal{H}^\varepsilon_{n+1-j})^{-1}\tilde{s}.
	\end{aligned}
\end{equation}
Then there exists $C=C(n,j)>0$ such that
\begin{align}
||(\lambda-\mathcal{L}_0)^\frac12(\tilde{p}^\varepsilon-p^\varepsilon)||_\varepsilon\leq C(||(\lambda-\mathcal{L}_0)^{-\frac12}(\tilde{s}-s)||_\varepsilon+\delta_{\varepsilon}||(\lambda-\mathcal{L}_0)^{-\frac12}s||_\varepsilon),
	\label{5.19}
\end{align}
where $\delta_{\varepsilon}\to 0$ uniformly in $n,j$ as $\varepsilon\to 0$.
Moreover, if $s=\mathcal{A}^\varepsilon_{+}g,\tilde{s}=\mathcal{A}^\varepsilon_{+}\tilde{g}$, 
\begin{align}
||(\lambda-\mathcal{L}_0)^\frac12(\tilde{p}^\varepsilon-p^\varepsilon)||_\varepsilon\leq C(||(-\mathcal{L}_0)^{\frac12}(\tilde{g}-g)||_\varepsilon+\delta_{\varepsilon}||(-\mathcal{L}_0)^{\frac12}g||_\varepsilon). \label{5.20}
\end{align}
\end{lemma}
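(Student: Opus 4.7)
The plan is to reduce both bounds to a single application of the Replacement estimate in Proposition \ref{prop:rep}. Set
\[
A := \lambda - \mathcal{L}_0 + \mathcal{H}_{n+1-j}^\varepsilon, \qquad B := \lambda - \mathcal{L}_0 - \tfrac{4}{\nu^4}G_{n+1-j}(L^\varepsilon(\lambda-\mathcal{L}_0))\mathcal{L}_0,
\]
so that $\tilde{p}^\varepsilon - p^\varepsilon = A^{-1}(\tilde{s}-s) + (A^{-1}-B^{-1})s$. Both operators preserve $H_j^\varepsilon$ and each satisfies the operator inequality $A,\,B \geq \lambda - \mathcal{L}_0 > 0$: indeed $\mathcal{H}_{n+1-j}^\varepsilon$ is non-negative definite by its inductive construction $(\mathcal{A}_+^\varepsilon)^*(\cdot)^{-1}\mathcal{A}_+^\varepsilon$, while $-\frac{4}{\nu^4}G_{n+1-j}(L^\varepsilon(\lambda-\mathcal{L}_0))\mathcal{L}_0 \geq 0$ because $G_{n+1-j}\geq 0$ (Proposition \ref{prop:rep}(i)) and $-\mathcal{L}_0\geq 0$, the two factors commuting as functions of the same diagonal operator. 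By operator monotonicity, $(\lambda-\mathcal{L}_0)^{1/2}A^{-1}(\lambda-\mathcal{L}_0)^{1/2} \leq I$ on $H_j^\varepsilon$, and similarly for $B$.

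The first summand is then handled immediately: factoring $A^{-1} = (\lambda-\mathcal{L}_0)^{-1/2}\bigl[(\lambda-\mathcal{L}_0)^{1/2}A^{-1}(\lambda-\mathcal{L}_0)^{1/2}\bigr](\lambda-\mathcal{L}_0)^{-1/2}$ and using the bound above gives
\[
\|(\lambda-\mathcal{L}_0)^{1/2} A^{-1}(\tilde{s}-s)\|_\varepsilon \leq \|(\lambda-\mathcal{L}_0)^{-1/2}(\tilde{s}-s)\|_\varepsilon.
\]
For the second summand I apply the resolvent identity $A^{-1}-B^{-1} = A^{-1}(B-A)B^{-1}$ and observe that
\[
B - A = -\bigl(\mathcal{H}_{n+1-j}^\varepsilon + \tfrac{4}{\nu^4}G_{n+1-j}(L^\varepsilon(\lambda-\mathcal{L}_0))\mathcal{L}_0\bigr)
\]
is precisely the operator whose bilinear form is controlled by \eqref{ineq:proprep}. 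Pairing $(\lambda-\mathcal{L}_0)^{1/2}(A^{-1}-B^{-1})s$ with an arbitrary $\phi \in H_j^\varepsilon$ and moving $A^{-1}(\lambda-\mathcal{L}_0)^{1/2}$ onto the $\phi$-slot via self-adjointness of $A$ and $(\lambda-\mathcal{L}_0)^{1/2}$, Proposition \ref{prop:rep} yields
\[
|\langle (\lambda-\mathcal{L}_0)^{1/2}(A^{-1}-B^{-1})s,\phi\rangle_\varepsilon| \leq C(n,j)\,\delta_\varepsilon\,\|(-\mathcal{L}_0)^{1/2}B^{-1}s\|_\varepsilon\,\|(-\mathcal{L}_0)^{1/2}A^{-1}(\lambda-\mathcal{L}_0)^{1/2}\phi\|_\varepsilon.
\]
Using $-\mathcal{L}_0 \leq \lambda-\mathcal{L}_0$ together with the operator bound on $B$, the first factor is $\leq \|(\lambda-\mathcal{L}_0)^{-1/2}s\|_\varepsilon$; the same reasoning applied to $A$ bounds the second factor by $\|\phi\|_\varepsilon$. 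Taking the supremum over unit $\phi$ in $H_j^\varepsilon$ produces the second summand in \eqref{5.19}.

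Estimate \eqref{5.20} then follows from \eqref{5.19} by substituting $s=\mathcal{A}_+^\varepsilon g$, $\tilde{s}=\mathcal{A}_+^\varepsilon\tilde{g}$ and using \eqref{ineq:A+} on $H_{j-1}^\varepsilon$ to absorb the $(\lambda-\mathcal{L}_0)^{-1/2}$ weights on both terms; the number operator contributes only the factor $\sqrt{j-1}$, which is swept into $C(n,j)$. The main obstacle is the bookkeeping between the two natural weights in play: Proposition \ref{prop:rep} delivers the smallness $\delta_\varepsilon\to 0$ only when both slots of the bilinear form carry a $(-\mathcal{L}_0)^{1/2}$ weight, whereas the lemma is stated with the $(\lambda-\mathcal{L}_0)^{1/2}$ weight. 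Arranging the resolvent identity and the self-adjoint transfer onto the test function so that the $\lambda$-contribution is absorbed by the operator positivity $A,B\geq\lambda-\mathcal{L}_0$, without sacrificing the uniform factor $\delta_\varepsilon$, is the one nontrivial point of the argument.
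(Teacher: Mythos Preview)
Your proof is correct and follows essentially the same route as the paper: the same operators $A,B$, the same splitting via the resolvent identity $A^{-1}\tilde s - B^{-1}s = A^{-1}(\tilde s-s) + A^{-1}(B-A)B^{-1}s$, the same positivity $A,B\geq\lambda-\mathcal{L}_0$ to control the first piece, and the same appeal to Proposition~\ref{prop:rep} for $B-A$. Your duality argument against a test vector $\phi$ is a minor variant of the paper's direct norm estimate (which produces $\sqrt{\delta_\varepsilon}$ after a Young/absorption step rather than your $\delta_\varepsilon$), but the overall strategy is identical.
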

			
\begin{proof}
	First note that \eqref{5.20} is a consequence of \eqref{5.19} and \eqref{ineq:A+}. It suffices to prove \eqref{5.19}. Let $A_j=\lambda-\mathcal{L}_0+\mathcal{H}^\varepsilon_{n+1-j},\;
B_j=\lambda-\mathcal{L}_0-\frac{4}{\nu^4}G_{n+1-j}(L^\varepsilon(\lambda-\mathcal{L}_0))\mathcal{L}_0$. Similar to the proof of Proposition \ref{prop:rep}, the left hand side of \eqref{5.19} equals
\begin{align*}
||(\lambda-\mathcal{L}_0)^\frac12(A_j^{-1}\tilde{s}-B_j^{-1}s)||_\varepsilon &\leq ||(\lambda-\mathcal{L}_0)^\frac12(A_j^{-1}-B_j^{-1})s||_\varepsilon+||(\lambda-\mathcal{L}_0)^\frac12A_j^{-1}(\tilde{s}-s)||_\varepsilon\\
&=:(I)+(II).\end{align*}
For $(I)$, since $\mathcal{H}^\varepsilon_{n+1-j}$ is positive semi-definite, $$(I)=||(\lambda-\mathcal{L}_0)^\frac12A_j^{-1}(B_j-A_j)B_j^{-1}s||_\varepsilon\leq ||(\lambda-\mathcal{L}_0)^{-\frac12}(B_j-A_j)B_j^{-1}s||_\varepsilon.$$
Furthermore, by \eqref{ineq:proprep} in Proposition \ref{prop:rep}, we have
\begin{align*}
&||(\lambda-\mathcal{L}_0)^{-\frac12}(B_j-A_j)B_j^{-1}s||_\varepsilon^2\\
=&-\langle[\mathcal{H}_{n+1-j}^{\varepsilon}+\frac{4}{\nu^4}G_{n+1-j}(L^{\varepsilon}(\lambda-\mathcal{L}_0))\mathcal{L}_0]B_j^{-1}s,(\lambda-\mathcal{L}_0)^{-1}(B_j-A_j)B_j^{-1}s\rangle_{\varepsilon}\\
\lesssim&\delta_{\varepsilon}||(-\mathcal{L}_0)^{\frac12}B_j^{-1}s||_{\varepsilon}||(\lambda-\mathcal{L}_0)^{-\frac12}(B_j-A_j)B_j^{-1}s||_{\varepsilon}\\
\leq&\frac{\delta_{\varepsilon}}{2}||(\lambda-\mathcal{L}_0)^{-\frac12}(B_j-A_j)B_j^{-1}s||_{\varepsilon}^2+\frac{\delta_{\varepsilon}}{2}||(\lambda-\mathcal{L}_0)^{\frac12}B_j^{-1}s||_{\varepsilon}^2.
\end{align*}
Again since $-G_{n+1-j}(L^\varepsilon(-\mathcal{L}_0))\mathcal{L}_0$ in $B_j$ is positive semi-definite, when $\varepsilon$ is small enough, we have
$$||(\lambda-\mathcal{L}_0)^{-\frac12}(B_j-A_j)B_j^{-1}s||_\varepsilon\lesssim\sqrt{\delta_{\varepsilon}}||(\lambda-\mathcal{L}_0)^{\frac12}B_j^{-1}s||_{\varepsilon}\leq\sqrt{\delta_{\varepsilon}}||(\lambda-\mathcal{L}_0)^{-\frac12}s||_{\varepsilon}.$$
Finally using the non-negativity of $\mathcal{H}^\varepsilon_{n+1-j}$, $(II)$ is bounded by $||(\lambda-\mathcal{L}_0)^{-\frac12}(\tilde{s}-s)||_\varepsilon.$
\end{proof}
			
By the above Lemma, we now can estimate the difference between $b^{\varepsilon,n}$ and $\tilde{b}^{\varepsilon,n}$.

\begin{proposition}\label{prop:b-btild}
Let $n\in\mathbb{N}$ be fixed. For $i=1,2$, 
$\lim_{\varepsilon\to 0}||(\lambda-\mathcal{L}_0)^\frac12 ((b^{\varepsilon,n})_i-(\tilde{b}^{\varepsilon,n})_i)||_\varepsilon^2=0.$
\end{proposition}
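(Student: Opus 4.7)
My plan is to obtain the estimate component-by-component in the Wiener chaos decomposition, exploiting the parallel recursive structure of $b^{\varepsilon,n}$ and $\tilde b^{\varepsilon,n}$ together with the just-proved bounds \eqref{5.19}--\eqref{5.20}. Write $b^{\varepsilon,n}=\sum_{j=1}^n b_j^{\varepsilon,n}$ and $\tilde b^{\varepsilon,n}=\sum_{j=1}^n \psi_j^{\varepsilon,n}$, where $\psi_j^{\varepsilon,n}\in H_j^\varepsilon$ are defined by \eqref{trunc}. Since the components lie in mutually orthogonal chaoses and $\mathcal{L}_0$ preserves each $H_j^\varepsilon$, we have
\begin{equation*}
\|(\lambda-\mathcal{L}_0)^{\frac12}((b^{\varepsilon,n})_i-(\tilde b^{\varepsilon,n})_i)\|_\varepsilon^2=\sum_{j=1}^n a_{j,i}^{\varepsilon,n}{}^{2},\qquad a_{j,i}^{\varepsilon,n}:=\|(\lambda-\mathcal{L}_0)^{\frac12}(b_{j,i}^{\varepsilon,n}-\psi_{j,i}^{\varepsilon,n})\|_\varepsilon,
\end{equation*}
so it suffices to show that $a_{j,i}^{\varepsilon,n}\to 0$ as $\varepsilon\to 0$ for each fixed $j\in\{1,\dots,n\}$ and $i\in\{1,2\}$.

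For the base case $j=1$, both $b_1^{\varepsilon,n}$ and $\psi_1^{\varepsilon,n}$ fit into the framework of the preceding lemma with $s=\tilde s=\mathcal{V}_i^\varepsilon$. Thus \eqref{5.19} yields
\begin{equation*}
a_{1,i}^{\varepsilon,n}\leq C\,\delta_\varepsilon\,\|(\lambda-\mathcal{L}_0)^{-\frac12}\mathcal{V}_i^\varepsilon\|_\varepsilon,
\end{equation*}
and the right-hand factor was already computed in the proof of Proposition \ref{prop5.1} to be $\lesssim 1$ uniformly in $\varepsilon$ (using $\tfrac{\log(1+\nu^2/(2\varepsilon^2\lambda))}{\log(1/\varepsilon)}\lesssim 1$). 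Hence $a_{1,i}^{\varepsilon,n}\to 0$.

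For $j\geq 2$, I will induct. The recursions \eqref{iter-b} and \eqref{trunc} put us exactly in the setting of \eqref{5.20} with $g=b_{j-1,i}^{\varepsilon,n}$ and $\tilde g=\psi_{j-1,i}^{\varepsilon,n}$, giving
\begin{equation*}
a_{j,i}^{\varepsilon,n}\leq C\bigl(\|(-\mathcal{L}_0)^{\frac12}(\psi_{j-1,i}^{\varepsilon,n}-b_{j-1,i}^{\varepsilon,n})\|_\varepsilon+\delta_\varepsilon\|(-\mathcal{L}_0)^{\frac12}b_{j-1,i}^{\varepsilon,n}\|_\varepsilon\bigr).
\end{equation*}
The first term is controlled by $a_{j-1,i}^{\varepsilon,n}$ (using $-\mathcal{L}_0\leq \lambda-\mathcal{L}_0$), while the second is bounded by $\delta_\varepsilon C^{j-1}$ by \eqref{ineq-b2} of Proposition \ref{prop5.1}. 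Iterating from the base case gives $a_{j,i}^{\varepsilon,n}\leq C(n)\,\delta_\varepsilon\to 0$ as $\varepsilon\to 0$, where $C(n)$ depends on $n$ and $\lambda$ but not on $\varepsilon$. Summing the finitely many squared terms yields the claim.

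No substantive obstacle is expected: the structural content is already in the preceding lemma, and the only mild subtlety is ensuring the constants accumulated along the iteration remain harmless. This is automatic here because $n$ is fixed (the limit $n\to\infty$ is taken only later, in Theorem \ref{thm5.2}), so the finite product of constants $C^n$ is absorbed into the $o(1)$ factor $\delta_\varepsilon$.
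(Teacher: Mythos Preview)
Your proposal is correct and follows essentially the same approach as the paper: both reduce to the individual chaos components $b_{j,i}^{\varepsilon,n}-\tilde b_{j,i}^{\varepsilon,n}$, handle $j=1$ via \eqref{5.19} with $s=\tilde s=\mathcal{V}_i^\varepsilon$, and propagate to higher $j$ by iterating \eqref{5.20} together with the uniform bound \eqref{ineq-b2}. The only cosmetic difference is that the paper unrolls the recursion directly down to $j=1$ in one line, whereas you phrase it as a step-by-step induction; the resulting estimate $a_{j,i}^{\varepsilon,n}\lesssim C(n)\delta_\varepsilon$ is the same.
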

\begin{proof}
Since $b^{\varepsilon,n}=\sum_{j=1}^n b^{\varepsilon,n}_j,$ it suffices to prove that $\lim_{\varepsilon\to 0}||(\lambda-\mathcal{L}_0)^\frac12 (b_{j,i}^{\varepsilon,n}-\tilde{b}_{j,i}^{\varepsilon,n})||_\varepsilon^2=0,$ for each $j=1,\dots,n,\;i=1,2$.
Recalling the recursive structure of $b^{\varepsilon,n}_{j,i}$ in \eqref{iter-b}, by \eqref{5.20} and induction for $j-1,\dots,1$, we have $$||(\lambda-\mathcal{L}_0)^\frac12 (b_{j,i}^{\varepsilon,n}-\tilde{b}_{j,i}^{\varepsilon,n})||_\varepsilon\lesssim||(-\mathcal{L}_0)^\frac12 (b_{1,i}^{\varepsilon,n}-\tilde{b}_{1,i}^{\varepsilon,n})||_\varepsilon+\delta_{\varepsilon}\sum_{r=1}^{j-1}||(-\mathcal{L}_0)^\frac12b_{r,i}^{\varepsilon,n}||_\varepsilon.$$ By \eqref{ineq-b2}, since $n,\lambda$ are fixed, there exists a constant $C=C(n)$ such that $$\sum_{r=1}^{j-1}||(-\mathcal{L}_0)^\frac12b_{r,i}^{\varepsilon,n}||_\varepsilon\leq nC^n.$$
Take $s=\tilde{s}=\mathcal{V}_i^\varepsilon$ in \eqref{5.19}, we have
$$||(-\mathcal{L}_0)^\frac12 (b_{1,i}^{\varepsilon,n}-\tilde{b}_{1,i}^{\varepsilon,n})||_\varepsilon \lesssim \delta_\varepsilon||(\lambda-\mathcal{L}_0)^{-\frac12}\mathcal{V}_i^\varepsilon||_\varepsilon^2=\delta_\varepsilon\frac{\hat{\lambda}^2}{\log\frac1\varepsilon}\int_{\mathbb{R}^2}\frac{\hat{V}_\varepsilon(p)}{|p|^2}\frac{|p_2|^2}{\lambda+\frac{\nu^2}{2}|p|^2}dp\lesssim \delta_\varepsilon.$$ Consequently the statement follows.
\end{proof}

\subsection{Proof of Theorem \ref{thm5.2}}
			
Since for $i=1,2$, $n\in\mathbb{N}^{+}$, $(b^{\varepsilon,n})_i=\sum_{j=1}^{n}b_{j,i}^{\varepsilon,n}$, \eqref{5.3} holds immediately by \eqref{ineq-b1}.
For \eqref{5.4}, observe that
\begin{align*}
(\lambda-\mathcal{L}^\varepsilon) (b^{\varepsilon,n})_i=
\lambda (b^{\varepsilon,n})_i-\mathcal{L}_n^\varepsilon (b^{\varepsilon,n})_i-\mathcal{A}_+^\varepsilon b_{n,i}^{\varepsilon,n}
=\mathcal{V}_i^\varepsilon+(\lambda-\mathcal{L}_n^\varepsilon) ((b^{\varepsilon,n})_i-(\tilde{b}^{\varepsilon,n})_i)-\mathcal{A}_+^\varepsilon b_{n,i}^{\varepsilon,n}.
\end{align*}
Recall that $\mathcal{L}_n^\varepsilon=1_{\leq n}\mathcal{L}^\varepsilon 1_{\leq n}$ and $\tilde{b}^{\varepsilon,n}$ is defined as the solution to \eqref{truc-geratoreq1}.
Then the left hand side of \eqref{5.4} can be bounded by
\begin{align*}
||(\lambda-\mathcal{L}_0)^{-\frac12}(\mathcal{V}_i^\varepsilon-(\lambda-\mathcal{L}^\varepsilon)b_i^{\varepsilon,n})||_\varepsilon
&\leq ||(\lambda-\mathcal{L}_0)^{-\frac12}(\lambda-\mathcal{L}_n^\varepsilon) ((b^{\varepsilon,n})_i-(\tilde{b}^{\varepsilon,n})_i)||_\varepsilon+
||(\lambda-\mathcal{L}_0)^{-\frac12}\mathcal{A}_+^\varepsilon b_{n,i}^{\varepsilon,n}||_\varepsilon\\
&=: (I)+(II).
\end{align*}

For the first summand, $(\lambda-\mathcal{L}_n^\varepsilon)((b^{\varepsilon,n})_i-(\tilde{b}^{\varepsilon,n})_i)$ equals
$$(\lambda-\mathcal{L}_0) ((b^{\varepsilon,n})_i-(\tilde{b}^{\varepsilon,n})_i)
				-\sum_{j=1}^{n-1}\mathcal{A}_{+}^\varepsilon (b_{j,i}^{\varepsilon,n}-\tilde{b}_{j,i}^{\varepsilon,n})
				+\sum_{j=2}^{n}(\mathcal{A}_{+}^\varepsilon)^{*} (b_{j,i}^{\varepsilon,n}-\tilde{b}_{j,i}^{\varepsilon,n}).$$
Then by Lemma \ref{lem:estimate}, we have
\begin{align*}
&||(\lambda-\mathcal{L}_0)^{-\frac12}(\lambda-\mathcal{L}_n^\varepsilon) ((b^{\varepsilon,n})_i-(\tilde{b}^{\varepsilon,n})_i)||_\varepsilon^2\\
\lesssim&||(\lambda-\mathcal{L}_0)^{\frac12} ((b^{\varepsilon,n})_i-(\tilde{b}^{\varepsilon,n})_i)||_\varepsilon^2
+\sum_{j=1}^{n-1}||(\lambda-\mathcal{L}_0)^{-\frac12}\mathcal{A}_{+}^\varepsilon (b_{j,i}^{\varepsilon,n}-\tilde{b}_{j,i}^{\varepsilon,n})||_\varepsilon^2\\
&+\sum_{j=2}^{n}||(\lambda-\mathcal{L}_0)^{-\frac12}\mathcal{A}_{-}^\varepsilon (b_{j,i}^{\varepsilon,n}-\tilde{b}_{j,i}^{\varepsilon,n})||_\varepsilon^2\\
\lesssim&(1+\frac1\lambda)||(\lambda-\mathcal{L}_0)^{\frac12} ((b^{\varepsilon,n})_i-(\tilde{b}^{\varepsilon,n})_i)||_\varepsilon^2,
\end{align*}
which goes to 0 as $\varepsilon\to 0$ by Proposition \ref{prop:b-btild}.
				
For $(II)$, applying Lemma \ref{lem:estimate} again, we have $||(\lambda-\mathcal{L}_0)^{-\frac12}\mathcal{A}_{+}^{\varepsilon}b_{n,i}^{\varepsilon,n}||_{\varepsilon}\lesssim
||(\lambda-\mathcal{L}_0)^{\frac12}b_{n,i}^{\varepsilon,n}||_{\varepsilon}$,
By \eqref{ineq:b_j} in Proposition \ref{prop:lim-b}, $$\lim_{\varepsilon\to 0}||(\lambda-\mathcal{L}_0)^{\frac12}b_{n,i}^{\varepsilon,n}||_{\varepsilon}=\frac{\nu^2}{2}G_n^{+,1}(\pi\hat{\lambda}^2).$$
By \eqref{5.13}, $G_n^{+,1}(\pi\hat{\lambda}^2)\leq\left(\frac{4}{\nu^4}\right)^{n-1}
\frac{(\pi\hat{\lambda}^2)^{n-1}}{(n-1)!}\to 0,$ as $n\to\infty$. Therefore \eqref{5.4} holds.
				
Next we consider \eqref{5.5} and \eqref{5.6}. It is easy to see that \eqref{5.6} is a direct consequence of Wiener chaos decomposition and Proposition \ref{prop:lim-b}. It remains to prove \eqref{5.5}. 
For $i=1,2$, there holds that
$$\lim_{n\to\infty}\lim_{\varepsilon\to 0}||(\lambda-\mathcal{L}_0)^{\frac12}(b^{\varepsilon,n})_i||^2_\varepsilon
=\lim_{n\to\infty}\frac{\nu^2}{2}\sum_{j=1}^{n}G_j^{+,n+1-j}(\pi\hat{\lambda}^2).$$
Setting $S_n(x):=\sum_{j=0}^{n}G_j^{+,n+1-j}(x)$,
by Lemma \ref{lem:fun-G}, we can obtain that 
$$S_n(x)\lesssim e^x, \;|S_n^{'}(x)+S_n^{''}(x)|\lesssim e^x, \;S_{n+1}^{'}(x)=\frac{\frac{4}{\nu^4}S_n(x)}{(1+\frac{4}{\nu^4}G_{n+1}(x))^2}.$$
A similar argument to the proof of Theorem \ref{thm2.1} yields that there exists a continuous differentiable function $S$, such that $S_n, S_n^{'}$ converge uniformly to $S$ and $S^{'}$ respectively on every compact set. Then 
$$S^{'}(x)=\frac{\frac{4}{\nu^4}S(x)}{(1+\frac{4}{\nu^4}G(x))^2},\;S(0)=1,$$
where $G$ is the limit point of $G_n$, given by \eqref{def:G}. This implies that $S$ is given explicitly by $S(x)=\sqrt{\frac{8}{\nu^4}x+1}.$
Therefore the left hand side of \eqref{5.5} equals $$\lim_{n\to\infty}\sum_{j=1}^{n}G_j^{+,n+1-j}(\pi\hat{\lambda}^2)
=\lim_{n\to\infty}S_n(\pi\hat{\lambda}^2)-1=S(\pi\hat{\lambda}^2)-1=\frac{c(\nu)^2}{2\nu^2}.$$

\section{Convergence}\label{sec6}
	
This section aims to complete the proof of Theorem \ref{thm2.2}. 
We start by deriving the limit of the quadratic variation of the Dynkin martingales, which approximate $N^\varepsilon$ according to the estimates in the preceding Section. Subsequently, in Subsection \ref{sec6.1} we prove Theorem \ref{thm2.2}, while the proof for the limit of the variance of quadratic variation is provided in Subsection \ref{sec6.2}.
 
Recall that $b^\varepsilon=\begin{pmatrix}
b^\varepsilon_1\\b^\varepsilon_2
\end{pmatrix}$, $b_i^\varepsilon\in\mathcal{D}(\mathcal{L}^\varepsilon),\;i=1,2$, and $M_t(b^\varepsilon)$ is defined by \eqref{dynmart}. By \eqref{eq:mart} (in which $h$ is replaced by $b_i^\varepsilon$),  $$M_t(b^\varepsilon_i)=\nu\int_0^t Db^\varepsilon_i(\eta_{s}^\varepsilon)\cdot dB_s,$$ and the quadratic variation 
$$\langle M(b^\varepsilon_i),M(b^\varepsilon_j)\rangle_t=\nu^2\sum_{k=1}^{2}\int_0^t D_kb_i^\varepsilon(\eta_{s}^\varepsilon)D_kb_j^\varepsilon(\eta_{s}^\varepsilon)ds,$$ for $i,j=1,2$.
Therefore we have 
\begin{align}
\langle M(b_i^\varepsilon),B\rangle_t&=\nu\int_0^t Db^\varepsilon_i(\eta_{s}^\varepsilon)ds,\label{6.2_1}\\
\textbf{E}(\langle M(b_i^\varepsilon),M(b_j^\varepsilon)\rangle_t)&=\nu^2t \langle-\mathcal{L}_0 b_i^\varepsilon, b_j^\varepsilon\rangle_\varepsilon.\label{eq:quadv}
\end{align}
			
In the previous section, we introduced the functional $b^{\varepsilon,n}$ as an approximation to $\Tilde{b}^{\varepsilon,n}$ and provided corresponding estimates, where $b^{\varepsilon,n}=\sum_{j=1}^nb_j^{\varepsilon,n}$ is defined by \eqref{iter-b}. In the proof of Theorem \ref{thm2.2}, we can see that as $\varepsilon\to0$ and $n\to\infty$, the difference between $M_t(b^{\varepsilon,n})$ and $N_t^\varepsilon$ vanishes. Consequently, we proceed by establishing the limit of the quadratic variation of $M_t(b^{\varepsilon,n})$.
			
\begin{theorem}\label{thm6.1}
Recall that $c$ is the diffusion coefficient in Theorem \ref{thm2.1}. For $t\geq 0$, $i,j=1,2$, there holds that
\begin{align}
	&\lim_{\varepsilon\to 0}Var(\langle M((b^{\varepsilon,n})_i),M((b^{\varepsilon,n})_j)\rangle_t)=0,\label{6.1}\\
	&\lim_{n\to\infty}\lim_{\varepsilon\to 0}\mathbf{E}(\langle M((b^{\varepsilon,n})_i,M((b^{\varepsilon,n})_i)\rangle_t)=\frac{c(\nu)^2}{2}t,\label{6.2}\\
	&\lim_{n\to\infty}\lim_{\varepsilon\to 0}\mathbf{E}(\langle M((b^{\varepsilon,n})_1),M((b^{\varepsilon,n})_2)\rangle_t)=0.\label{6.3}
     \end{align}
\end{theorem}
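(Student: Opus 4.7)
\textbf{Proof plan for Theorem \ref{thm6.1}.}

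The plan is to reduce (6.2) and (6.3) to algebraic identities plugged into the estimates already available from Theorem \ref{thm5.2}, and to reduce (6.1) to the It\^o trick applied to a suitable centered functional of the environment. First I would invoke \eqref{eq:quadv}, which gives
\begin{align*}
\mathbf{E}\bigl[\langle M((b^{\varepsilon,n})_i),M((b^{\varepsilon,n})_j)\rangle_t\bigr]
=\nu^{2}t\,\langle -\mathcal{L}_0(b^{\varepsilon,n})_i,(b^{\varepsilon,n})_j\rangle_{\varepsilon}.
\end{align*}
Writing $-\mathcal{L}_0=(\lambda-\mathcal{L}_0)-\lambda$, the right hand side equals
$\nu^{2}t\bigl(\langle(\lambda-\mathcal{L}_0)^{1/2}(b^{\varepsilon,n})_i,(\lambda-\mathcal{L}_0)^{1/2}(b^{\varepsilon,n})_j\rangle_\varepsilon-\lambda\langle(b^{\varepsilon,n})_i,(b^{\varepsilon,n})_j\rangle_\varepsilon\bigr)$. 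For $i=j$, applying \eqref{5.5} and \eqref{5.3} yields the limit $\nu^{2}t\cdot\frac{c(\nu)^{2}}{2\nu^{2}}=\frac{c(\nu)^{2}}{2}t$, proving \eqref{6.2}. For $i\neq j$, applying \eqref{5.6} and \eqref{5.3} yields limit $0$, proving \eqref{6.3}. Note that $\lambda>0$ is arbitrary throughout, so this is legitimate.

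For \eqref{6.1}, the strategy is to center the quadratic variation and bound its $L^{2}$-norm via It\^o's trick. Since $\mathbb{P}^\varepsilon$ is invariant for $\eta_\cdot^\varepsilon$, the process $Z^{\varepsilon,n}_t:=\langle M((b^{\varepsilon,n})_i),M((b^{\varepsilon,n})_j)\rangle_t-\mathbf{E}[\langle M((b^{\varepsilon,n})_i),M((b^{\varepsilon,n})_j)\rangle_t]$ satisfies
\begin{align*}
Z^{\varepsilon,n}_t=\nu^{2}\int_0^t \Psi^{\varepsilon,n}(\eta_s^\varepsilon)\,ds,\qquad
\Psi^{\varepsilon,n}:=\sum_{k=1}^{2}\bigl(D_k(b^{\varepsilon,n})_i\,D_k(b^{\varepsilon,n})_j-\mathbb{E}^\varepsilon[D_k(b^{\varepsilon,n})_i\,D_k(b^{\varepsilon,n})_j]\bigr),
\end{align*}
which is a mean-zero functional. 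Because each $b_{j,i}^{\varepsilon,n}\in H_j^\varepsilon$ and $D_k$ preserves chaos level, $\Psi^{\varepsilon,n}$ lies in $\bigoplus_{m=1}^{2n}H_m^\varepsilon$. Applying Lemma \ref{lem:Ito} with $p=2$ and $\lambda=1$ gives
\begin{align*}
\mathrm{Var}\bigl(\langle M((b^{\varepsilon,n})_i),M((b^{\varepsilon,n})_j)\rangle_t\bigr)
=\mathbf{E}[(Z_t^{\varepsilon,n})^{2}]
\lesssim_{n}(t+t^{2})\,\|(1-\mathcal{L}_0)^{-1/2}\Psi^{\varepsilon,n}\|_\varepsilon^{2},
\end{align*}
so it suffices to show $\|(1-\mathcal{L}_0)^{-1/2}\Psi^{\varepsilon,n}\|_\varepsilon\to 0$ as $\varepsilon\to 0$ for fixed $n$.

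To estimate this norm I would decompose the product of two Wick polynomials using the product formula: for $\phi_a\in H_{m_a}^\varepsilon$ with kernels $\hat{\phi}_a$, the centered product $\phi_1\phi_2-\mathbb{E}^\varepsilon[\phi_1\phi_2]$ projects into $H_{m_1+m_2-2r}^\varepsilon$ for $r=0,\ldots,(m_1\wedge m_2)-\mathbf{1}_{m_1=m_2}$, with each projection's Fock-space kernel obtained by integrating $r$ pairs of variables against $\hat{V}_\varepsilon(q)/|q|^{2}$. Plugging in $\phi_a=D_k b_{j_a,i_a}^{\varepsilon,n}$ and using the explicit recursive kernel of $b_{j,i}^{\varepsilon,n}$ coming from \eqref{iter-b}, one expresses each chaos projection of $\Psi^{\varepsilon,n}$ as an integral whose integrand carries a prefactor $\hat\lambda^{2n}/(\log\tfrac1\varepsilon)^{n}$ (since each $b^{\varepsilon,n}_j$ inherits a factor $\hat\lambda/\sqrt{\log\tfrac1\varepsilon}$ from $\mathcal V^\varepsilon$). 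The factor $(1+\tfrac{\nu^{2}}{2}|\sum p_i|^{2})^{-1}$ from $(1-\mathcal{L}_0)^{-1/2}$ combined with Cauchy--Schwarz and the change of variables $q\mapsto \varepsilon q$ produces at most a $\log(1/\varepsilon)$-divergence per contracted pair, analogous to the estimate \eqref{3.15} in Lemma \ref{lem:estimate}. Since $n\geq 1$ there is always at least one excess factor of $(\log\tfrac1\varepsilon)^{-1}$, giving the required $o(1)$ bound.

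The main obstacle will be organizing the combinatorics of all possible Wick contractions $r$ and pairs $(j_1,j_2)$ and confirming that in every resulting term the spatial integral in the Fock norm is controlled by the explicit bounds (analogous to the $\Omega_1,\Omega_2,\Omega_3$ partition used in the proof of Lemma \ref{lem:rep}) rather than producing an additional $\log(1/\varepsilon)$ per contraction. Proposition \ref{prop5.1} (giving $\|(-\mathcal{L}_0)^{1/2}b_{j,i}^{\varepsilon,n}\|_\varepsilon\lesssim_n 1$ and $\|b_{j,i}^{\varepsilon,n}\|_\varepsilon\lesssim_n(\lambda\log\tfrac1\varepsilon)^{-1/2}$) provides the needed a priori control on the kernels, while Lemma \ref{lem:estimate} supplies the integral identities that turn logarithmic accumulation into fixed finite bounds. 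Since we only seek $o(1)$ decay rather than an explicit limit, the argument is qualitatively easier than the full Replacement Lemma.
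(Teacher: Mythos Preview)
Your proofs of \eqref{6.2} and \eqref{6.3} are correct and identical to the paper's: both rewrite $-\mathcal{L}_0=(\lambda-\mathcal{L}_0)-\lambda$ in \eqref{eq:quadv} and invoke \eqref{5.3}, \eqref{5.5}, \eqref{5.6}.

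Your strategy for \eqref{6.1} is also the paper's (It\^o trick applied to the centered functional $\Psi^{\varepsilon,n}$, then Wick-product decomposition chaos by chaos, then the a priori bounds of Proposition \ref{prop5.1}), but the power-counting heuristic you sketch is inaccurate and would not close as stated. The prefactor is not $\hat\lambda^{2n}/(\log\tfrac1\varepsilon)^n$: each $b_{j}^{\varepsilon,n}$ carries $j$ naive factors of $\hat\lambda/\sqrt{\log\tfrac1\varepsilon}$, but $j-1$ of those are already compensated by the recursive integrals---this is precisely what Proposition \ref{prop5.1} records. The paper's actual mechanism (Section \ref{sec6.2}) is, for each pair $(j_1,j_2)$ with $j_1,j_2>1$, to peel off \emph{one} step of the recursion \eqref{iter-b} from each factor, exposing $\hat\lambda^4/(\log\tfrac1\varepsilon)^2$ from the two $\mathcal{A}_+^\varepsilon$'s; after Cauchy--Schwarz on the contracted variables the remainder is bounded by $\|b_{j_1-1}\|_\varepsilon^2\|b_{j_2-1}\|_\varepsilon^2\lesssim_\lambda(\log\tfrac1\varepsilon)^{-2}$ times a residual integral over the two peeled variables, and that integral is $O(\log\tfrac1\varepsilon)$ by an elementary H\"older bound. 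The net effect is $O((\log\tfrac1\varepsilon)^{-3})$. This forces a case distinction between $\{j_1,j_2>1\}$ and $\{j_1=1\text{ or }j_2=1\}$; in the latter one cannot peel and must use the explicit kernel of $b_1^{\varepsilon,n}$ directly. The $\Omega_1,\Omega_2,\Omega_3$ partition from the Replacement Lemma is not needed here---only the elementary estimates $\int_{|p|\le1}(\varepsilon^2\lambda+\tfrac{\nu^2}{2}|p|^2)^{-2}dp\lesssim(\varepsilon^2\lambda)^{-1}$ and $\int_{|p|\le1}(\varepsilon^2\lambda+\tfrac{\nu^2}{2}|p|^2)^{-1}dp\lesssim\log\tfrac1\varepsilon$.
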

\begin{proof}
	\eqref{6.1} will be proved in Section \ref{sec6.2}. We only prove \eqref{6.2} and \eqref{6.3} below. By \eqref{eq:quadv}, 
\begin{align*}
    \textbf{E}(\langle M((b^{\varepsilon,n})_i),M((b^{\varepsilon,n})_j)\rangle_t)&=
\nu^2 t(\langle(\lambda-\mathcal{L}_0) (b^{\varepsilon,n})_i, (b^{\varepsilon,n})_j\rangle_\varepsilon-\lambda\langle (b^{\varepsilon,n})_i,(b^{\varepsilon,n})_j\rangle_\varepsilon)\\
&\leq\nu^2t (\langle(\lambda-\mathcal{L}_0) (b^{\varepsilon,n})_i, (b^{\varepsilon,n})_j\rangle_\varepsilon+\lambda ||(b^{\varepsilon,n})_i||_\varepsilon||(b^{\varepsilon,n})_j||_\varepsilon).
\end{align*}
Then the conclusion follows by \eqref{ineq-b1} and Theorem \ref{thm5.2}.
\end{proof}
			
\subsection{The proof of Theorem \ref{thm2.2}}\label{sec6.1}
By Theorem \ref{thm:tightness} and \cite[Theorem 5.4 in Chapter 0]{Revuz.1999}, the distributions of $\{X_{\cdot}^\varepsilon\}$ are weakly relatively compact in $C([0,T],\mathbb{R}^2)$, for every $T>0$. Since $C([0,\infty),\mathbb{R}^2)$ is a Polish space with the topology of uniform convergence on compact subsets of $\mathbb{R}^{+}$, this sequence of distributions is weakly relatively compact in $C([0,\infty),\mathbb{R}^2)$ by \cite[Proposition 1.5 in Chapter XIII]{Revuz.1999}. Suppose $h:\Omega\times\Sigma\to C([0,\infty),\mathbb{R}^2)$ is the limit in distribution of a subsequence of $\{X_{\cdot}^\varepsilon\}$. According to L\'evy's characterization, it suffices to prove that $h^i$ is a martingale with respect to $\sigma\{h^i_s,s\leq t\}$ and has the quadratic variation $\langle h^i\rangle_t=\left(\frac{c(\nu)^2}{2}+\nu^2\right)t,\;i=1,2$, and $\langle h^1,h^2\rangle_t=0$. For simplicity, we still denote this subsequence as $\{X_{\cdot}^\varepsilon\}$.
                
Fix $i=1$ or $2$, we first prove that $h^i_t$ is a martingale with respect to its natural filtration $\mathcal{F}_t:=\sigma\{h^i_s,s\leq t\}$. That is, for $s<t$ and any continuous bounded functional $G$: $C([0,s])\to \mathbb{R}$, $\textbf{E}[(h_t^i-h_s^i)G(h^i\upharpoonright_{[0,s]})]=0$, where $h^i\upharpoonright_{[0,s]}$ denotes the restriction of $h^i$ to the interval $[0,s]$.

For $A>0$ define $f_A$ on $\mathbb{R}$ as follows:
\begin{align*}
f_A(x)=\left\{
\begin{aligned}
x,& \text{$\quad if\quad |x|\leq A$}, \\
A,& \text{$\quad if\quad x>A$},\\
\text{$-$}A,& \text{$\quad if\quad x<-A$}.\\
\end{aligned}\right.
\end{align*}
Then $f_A$ is bounded and continuous. Since $G$ is bounded, for $r=t$ or $s$, we have
\begin{align*}
    |\textbf{E}[h_r^iG(h^i\upharpoonright_{[0,s]})-X_r^{\varepsilon,i}G(X^{\varepsilon,i}\upharpoonright_{[0,s]})]|&\leq \textbf{E}|h_r^i-f_A(h_r^i)|+\textbf{E}|X_r^{\varepsilon,i}-f_A(X_r^{\varepsilon,i})|
 \\&+|\textbf{E}[f_A(h_r^i)G(h^i\upharpoonright_{[0,s]})-f_A(X_r^{\varepsilon,i})G(X^{\varepsilon,i}\upharpoonright_{[0,s]})]|.
 \end{align*}
By \eqref{eq:N}, $X^{\varepsilon,i}_r$ is uniformly integrable and $\textbf{E}|h^i_r|<\infty$. Since $X^{\varepsilon,i}$ converges in law to $h^i$, we obtain that
$$\textbf{E}[(h_t^i-h_s^i)G(h^i\upharpoonright_{[0,s]})]=\lim_{\varepsilon\to 0}\textbf{E}[(X_t^{\varepsilon,i}-X_s^{\varepsilon,i})G(X^{\varepsilon,i}\upharpoonright_{[0,s]})].$$
Recalling the definition of $M_t(b^{\varepsilon,n})$ \eqref{dynmart} (where $b^\varepsilon$ is replaced by $b^{\varepsilon,n}$), for $r=t$ or $s$, we can get that
\begin{align}
&\textbf{E}\left|X_r^{\varepsilon,i}-[M_r((b^{\varepsilon,n})_i)+\nu B_r^i]\right|=\textbf{E}|N_r^{\varepsilon,i}-M_r((b^{\varepsilon,n})_i)|\nonumber\\
\leq &\textbf{E}|(b^{\varepsilon,n})_i(\eta_r^\varepsilon)-(b^{\varepsilon,n})_i(\eta_0^\varepsilon)|+\textbf{E}\left|\int_0^r(\mathcal{V}^\varepsilon_i(\eta_s^\varepsilon)-(\lambda-\mathcal{L}^\varepsilon) (b^{\varepsilon,n})_i)ds\right|
+\lambda\textbf{E}\left|\int_0^r(b^{\varepsilon,n})_i(\eta_s^\varepsilon) ds\right|,\label{*}
\end{align}
where $B_r^i$ is the $i$-th component of $B_r$.
By \eqref{5.3} in Theorem \ref{thm5.2} and stationarity, when $\varepsilon\to 0, n\to\infty$, there holds that
$$\textbf{E}|(b^{\varepsilon,n})_i(\eta_r^\varepsilon)-(b^{\varepsilon,n})_i(\eta_0^\varepsilon)|\leq 2||(b^{\varepsilon,n})_i||_\varepsilon\to 0,\quad
\lambda\textbf{E}\left|\int_0^r(b^{\varepsilon,n})_i(\eta_s^\varepsilon) ds\right|\lesssim||(b^{\varepsilon,n})_i||_\varepsilon\to 0.$$
Therefore the first and last term of \eqref{*} vanish. Moreover, by H\"older's inequality and \eqref{eq:Ito}, the intermediate term can be bounded by
\begin{align*}
\left[\textbf{E}\left|\int_0^r(\mathcal{V}^\varepsilon_i(\eta_s^\varepsilon)-(\lambda-\mathcal{L}^\varepsilon) (b^{\varepsilon,n})_i(\eta_s^\varepsilon))ds\right|^2\right]^{\frac12}\lesssim(r+r^2\lambda)^\frac12||(\lambda-\mathcal{L}_0)^{-\frac12}(\mathcal{V}^\varepsilon_i-(\lambda-\mathcal{L}^\varepsilon) (b^{\varepsilon,n})_i)||_\varepsilon,
\end{align*}
which goes to 0 as $\varepsilon\to 0, n\to\infty$	by \eqref{5.4} in Theorem \ref{thm5.2}.
Thus we have $$\textbf{E}[(h_t^i-h_s^i)G(h^i\upharpoonright_{[0,s]})]=
\lim_{n\to\infty}\lim_{\varepsilon\to 0}\textbf{E}[(M_t((b^{\varepsilon,n})_i)-M_s((b^{\varepsilon,n})_i)+\nu B_t^i-\nu B_s^i)G(X^{\varepsilon,i}\upharpoonright_{[0,s]})].$$
It can be observed that $B_t$ is measurable with respect to $\mathcal{G}_t=\sigma\{\eta^\varepsilon_s,s\leq t\}$.Then $\sigma\{X^{\varepsilon,i}_s,s\leq t\}\subseteq \mathcal{G}_t$. Since $M_t((b^{\varepsilon,n})_i)$ is the Dynkin martingale w.r.t $\mathcal{G}_t$, $\{h^i_t\}_{t\geq 0}$ is a martingale with respect to $\{\mathcal{F}_t\}_{t\geq 0}$.
				
Next we consider the quadratic variation of $h_t$. By \eqref{6.2_1}, 
\begin{align}
&\langle M_t((b^{\varepsilon,n})_i+\nu B_t^i, M_t((b^{\varepsilon,n})_j)+\nu B_t^j\rangle_t\label{quar}\\
=&\langle M_t((b^{\varepsilon,n})_i, M_t((b^{\varepsilon,n})_j)\rangle_t+\nu^2\left(\int_0^t D_i (b^{\varepsilon,n})_j(\eta_s^\varepsilon)ds+\int_0^t D_j (b^{\varepsilon,n})_i(\eta_s^\varepsilon)ds\right)+\nu^2\delta_{i,j}t,\nonumber
\end{align}
where $\delta_{i,j}$ denotes the Kronecker delta.
Since $(b^\varepsilon)_i\in\oplus_{k=1}^\infty H_k^\varepsilon$, we have $$\textbf{E}\left(\int_0^t D(b^\varepsilon)_i(\eta_{s}^\varepsilon)ds\right)=t\mathbb{E}^\varepsilon(D(b^\varepsilon)_i)=0.$$ Then the expectation of \eqref{quar} equals
\begin{align}\label{expect}
\textbf{E}(\langle M_t((b^{\varepsilon,n})_i, M_t((b^{\varepsilon,n})_j)\rangle_t)+\nu^2\delta_{ij}t,
\end{align}
and the variance of \eqref{quar} is bounded by a constant times
\begin{align}
Var(\langle M_t((b^{\varepsilon,n})_i, M_t((b^{\varepsilon,n})_j)\rangle_t)+\textbf{E}\left(\int_0^t D_i (b^{\varepsilon,n})_j(\eta_s^\varepsilon)ds\right)^2+\textbf{E}\left(\int_0^t D_j(b^{\varepsilon,n})_i(\eta_s^\varepsilon)ds\right)^2.\label{var}
\end{align}
By It\^o trick\eqref{eq:Ito} and the stationarity of $\eta^\varepsilon$, for fixed $\lambda>0$, we have 
\begin{align*}
\textbf{E}\left(\int_0^t D_i (b^{\varepsilon,n})_j(\eta_s^\varepsilon)ds\right)^2&\lesssim (t+t^2\lambda)||(\lambda-\mathcal{L}_0)^{-\frac12}D_i (b^{\varepsilon,n})_j||_\varepsilon^2\\
&=(t+t^2\lambda)\langle-(\lambda-\mathcal{L}_0)^{-1}(D_i)^2(b^{\varepsilon,n})_j,(b^{\varepsilon,n})_j\rangle_\varepsilon\\
&\leq (t+t^2\lambda)||(b^{\varepsilon,n})_j||_\varepsilon^2,
\end{align*}
which vanishes as $\varepsilon\to 0$ by \eqref{ineq-b1}. Thus by \eqref{6.1}, the variance of the quadratic variance $X_t^\varepsilon$ vanishes as $\varepsilon\to0.$

Since $h$ and $X^\varepsilon$ are continuous martingales, and $X^\varepsilon$ converges in distribution to $h$, we have
$\textbf{E}(\langle h^i,h^j\rangle_t)=\textbf{E}(h_t^i h_t^j)=\lim_{\varepsilon\to 0}\textbf{E}(X_t^{\varepsilon,i}X_t^{\varepsilon,j}).$
Using Minkowski inequality and similarly to \eqref{*}, $\textbf{E}|N_r^{\varepsilon,i}-M_r((b^{\varepsilon,n})_i)|^2$ and $\textbf{E}|N_r^{\varepsilon,1}N_r^{\varepsilon,2}-M_r((b^{\varepsilon,n})_1)M_r((b^{\varepsilon,n})_2)|^2$ also vanish as $\varepsilon\to 0, n\to\infty$ for $i=1,2.$ Therefore by \eqref{expect}, for $i,j=1,2$, \begin{align}
\textbf{E}(\langle h^i,h^j\rangle_t)&=\lim_{n\to\infty}\lim_{\varepsilon\to 0}\textbf{E}(M_t((b^{\varepsilon,n})_i)+\nu B_t^i)(M_t((b^{\varepsilon,n})_j)+\nu B_t^j)\\
&=\lim_{n\to\infty}\lim_{\varepsilon\to 0}\textbf{E}(\langle M((b^{\varepsilon,n})_i),M((b^{\varepsilon,n})_j)\rangle_t)+\nu^2\delta_{i,j}t.
\end{align}
By \eqref{6.1} in Theorem \ref{thm6.1} and \eqref{var}, we can replace the quadratic variation of $h^i$ by its expectation. Then the proof has completed following Theorem \ref{thm6.1}.

\subsection{The variance of quadratic variation}\label{sec6.2}
This section is devoted to the proof of \eqref{6.1}. Fix $n\in\mathbb{N}^{+}, \lambda>0$, $i,j=1,2$. Let
\begin{align*}
&f^\varepsilon:=\sum_{k=1}^{2} (D_k(b^{\varepsilon,n})_iD_k(b^{\varepsilon,n})_j-\langle D_k(b^{\varepsilon,n})_i,D_k(b^{\varepsilon,n})_j\rangle_\varepsilon).
\end{align*}
Then $f^\varepsilon\in\oplus_{m=0}^{2n} H_m^\varepsilon$. By Wiener chaos decomposition, we write $f^\varepsilon$ as $f^\varepsilon=\sum_{m=0}^{2n}f_m^\varepsilon$. Note that $f_0^\varepsilon=0.$ By It\^o trick \eqref{eq:Ito}, it suffices to prove that for each $1\leq m\leq 2n$,
\begin{equation}
\lim_{\varepsilon\to 0}||(\lambda-\mathcal{L}_0)^{-\frac12}f_m^\varepsilon||_\varepsilon= 0.
\end{equation}
Let $\Pi_m$ be the projection from $L^2(\mathbb{P}^\varepsilon)$ onto $H_m^\varepsilon$. Since for $k=1,2$, $D_k(H_n^\varepsilon)\subseteq H_n^\varepsilon$, for all $n\in\mathbb{N}$, $f_m^\varepsilon$ can be written as 
\begin{align*}
f_m^\varepsilon=\sum_{k=1}^2\sum_{j_1,j_2=1}^n \Pi_m\left(D_kb^{\varepsilon,n}_{j_1,i}D_k b^{\varepsilon,n}_{j_2,j}\right)
=:\sum_{k=1}^2\sum_{j_1,j_2=1}^n f_{m,k,j_1,j_2}^\varepsilon.
\end{align*}
Thus we only need to prove that for $k=1,2$, $1\leq m\leq 2n$, $1\leq j_1,j_2\leq n$, 
\begin{equation}\label{6.6}
\lim_{\varepsilon\to 0}||(\lambda-\mathcal{L}_0)^{-\frac12}f_{m,k,j_1,j_2}^\varepsilon||_\varepsilon= 0.
\end{equation}

By the recursive definition of $b^{\varepsilon,n}$ in \eqref{iter-b}, we need to distinguish two cases: either both $j_i,i=1,2$ are larger than 1, or at least one of them equals 1. From now on, we denote $\mathcal{F}(\psi)$ for the Fourier transform of $\psi\in H_n^\varepsilon.$
			
\subsubsection{The case where $j_1$ or $j_2$=1}
First we prove \eqref{6.6} when $j_1=j_2=1$. By the definition of $b_1^{\varepsilon,n}$ in \eqref{iter-b} and Remark \ref{rem:V}, for $i=1$ or 2,
\begin{align}\label{b}
b_{1,i}^{\varepsilon,n}&=\frac{\hat{\lambda}}{\sqrt{\log\frac1\varepsilon}}[\lambda-\mathcal{L}_0(1+\frac{4}{\nu^4}G_n(L^\varepsilon(\lambda-\mathcal{L}_0)))]^{-1}\int_{\mathbb{R}^2}\delta_0(x)\pi_i^xdx=:\int_{\mathbb{R}^2}\gamma(x)\pi_i^xdx,
\end{align}
with its Fourier transform
\begin{align}\label{6.14}
\mathcal{F}(b_{1,i}^{\varepsilon,n})(p)=\frac{\hat{\lambda}}{\sqrt{\log\frac1\varepsilon}}\frac{(-\imath p_i^\perp)}{\lambda+\frac{\nu^2}{2}|p|^2(1+\frac{4}{\nu^4}G_n(L^\varepsilon(\lambda+\frac{\nu^2}{2}|p|^2)))}.
\end{align}
By the definition of $D_k$ \eqref{def:D}, we have
\begin{align}
D_kb_{1,i}^{\varepsilon,n}&=-\int_{\mathbb{R}^2}\partial_{x_k}\gamma(x)\pi_i^xdx,\label{6.16-1}\\
D_kb_{1,i}^{\varepsilon,n}D_kb_{1,j}^{\varepsilon,n}&=
\int_{\mathbb{R}^2\times \mathbb{R}^2}\partial_{x_k}\gamma(x)\partial_{y_k}\gamma(y)\pi_i^x\pi_j^ydxdy\nonumber\\
&=\int_{\mathbb{R}^2\times \mathbb{R}^2}\partial_{x_k}\gamma(x)\partial_{y_k}\gamma(y):\pi_i^x\pi_j^y:dxdy+\int_{\mathbb{R}^2\times \mathbb{R}^2}\partial_{x_k}\gamma(x)\partial_{y_k}\gamma(y)\mathbb{E}^\varepsilon(\pi_i^x\pi_j^y)dxdy.\label{6.16}
\end{align}
Since the second part of \eqref{6.16} belongs to $H_0^\varepsilon$, we only need to focus on the first part with its Fourier transform
\begin{align}\label{6.15}
-p_{1,k}p_{2,k}\mathcal{F}(b_{1,i}^{\varepsilon,n})(p_1)\mathcal{F}(b_{1,j}^{\varepsilon,n})(p_2).
\end{align}

Take \eqref{6.15} into the square of the left hand side of \eqref{6.6} with $m=2$. Then it equals
\begin{align}\label{eq:6.19}
&\int_{\mathbb{R}^2\times\mathbb{R}^2}\frac{\widehat{V}_\varepsilon(p_1)\widehat{V}_\varepsilon(p_2)}{|p_1|^2|p_2|^2}\frac{|\mathcal{F}(\sum_{k=1}^2\Pi_1(D_kb_{1,i}^{\varepsilon,n}D_kb_{1,j}^{\varepsilon,n}))(p_1,p_2)|^2}{\lambda+\frac{\nu^2}{2}|p_1+p_2|^2}dp_1dp_2\\
=&\int_{\mathbb{R}^2\times\mathbb{R}^2}\frac{\widehat{V}_\varepsilon(p_1)\widehat{V}_\varepsilon(p_2)}{|p_1|^2|p_2|^2}\frac{|p_1\cdot p_2\mathcal{F}(b_{1,i}^{\varepsilon,n})(p_1)\mathcal{F}(b_{1,j}^{\varepsilon,n})(p_2)|^2}{\lambda+\frac{\nu^2}{2}|p_1+p_2|^2}dp_1dp_2.
\end{align}
Since $G_n\geq 0,$ we make variable substitution, by \eqref{6.14}, it can be upper bounded by
\begin{align}
\frac{\hat{\lambda}^4}{(\log\frac1\varepsilon)^2}\int_{\mathbb{R}^2\times\mathbb{R}^2}\frac{\varepsilon^2\hat{V}(p_1)\hat{V}(p_2)|p_1|^2|p_2|^2}{(\varepsilon^2\lambda+\frac{\nu^2}{2}|p_1|^2)^2(\varepsilon^2\lambda+\frac{\nu^2}{2}|p_2|^2)^2(\varepsilon^2\lambda+\frac{\nu^2}{2}|p_1+p_2|^2)}dp_1dp_2.\label{eq:6.9}
\end{align}
For every $p_1\in\mathbb{R}^2$, by H\"older's inequality, there holds that
\begin{align*}
&\int_{\mathbb{R}^2}\frac{\varepsilon^2\hat{V}(p)|p|^2}{(\varepsilon^2\lambda+\frac{\nu^2}{2}|p|^2)^2(\varepsilon^2\lambda+\frac{\nu^2}{2}|p+p_1|^2)}dp\\
\leq&\varepsilon^2\int_{\mathbb{R}^2}\frac{2\hat{V}(p)}{\nu^2(\varepsilon^2\lambda+\frac{\nu^2}{2}|p|^2)(\varepsilon^2\lambda+\frac{\nu^2}{2}|p+p_1|^2)}dp\\
\lesssim&\frac{2}{\nu^2} \varepsilon^2\left(\int_{\{|p|\leq1\}}\frac{1}{(\varepsilon^2\lambda+\frac{\nu^2}{2}|p|^2)^2}dp\right)^{\frac12}\left(\int_{\{|p|\leq1\}}\frac{1}{(\varepsilon^2\lambda+\frac{\nu^2}{2}|p+p_1|^2)^2}dp\right)^{\frac12}\\
\leq& \frac{2}{\nu^4\lambda}.
\end{align*}
Therefore, taking this estimate into \eqref{eq:6.9}, we have \begin{align*}
\eqref{eq:6.9}\lesssim\frac{\hat{\lambda}^4}{\lambda(\log\frac1\varepsilon)^2}\int_{\mathbb{R}^2}\frac{\hat{V}(p_1)|p_1|^2}{(\varepsilon^2\lambda+\frac{\nu^2}{2}|p_1|^2)^2}dp_1
\lesssim\frac{\log(1+\frac{\nu^2}{2\varepsilon^2\lambda})}{\lambda(\log\frac1\varepsilon)^2}
\lesssim \frac{1}{\lambda\log\frac1\varepsilon},
\end{align*}
which proves \eqref{6.6}.

Then for the case where $\{j_1=1,j_2>1\}$ or $\{j_1>1,j_2=1\}$, it suffices to consider $\{j_1=1,j_2>1\}$. The definition of $b_{1,i}^{\varepsilon,n}$ is the same as \eqref{b}, and we write $b_{j_2,j}^{\varepsilon,n}$ as 
\begin{align}
b_{j_2,j}^{\varepsilon,n}
&=[\lambda-\mathcal{L}_0(1+\frac{4}{\nu^4}G_{n+1-j_2}(L^\varepsilon(\lambda-\mathcal{L}_0)))]^{-1}\mathcal{A}_{+}^\varepsilon b_{j_2-1,j}^{\varepsilon,n}\label{Ab}\\
&=:\sum_{l_1,\dots,l_{j_2}=1}^2\int_{\mathbb{R}^{2j_2}}\gamma_{l_{1:j_2}}(x_{1:j_2}):\pi_{l_1}^{x_1}\dots\pi_{l_{j_2}}^{x_{j_2}}:dx_{1:j_2}.
\end{align}
For simplicity of notations, by linearity, below we omit the sum $\sum_{l_1,\dots,l_{j_2}=1}^2$. Since 
\begin{align*}
D_kb_{j_2,j}^{\varepsilon,n}=-\sum_{t=1}^{j_2}\int_{\mathbb{R}^{2j_2}}\partial_{x_{t,k}}\gamma_{l_{1:j_2}}(x_{1:j_2}):\pi_{l_1}^{x_1}\dots\pi_{l_{j_2}}^{x_{j_2}}:dx_{1:j_2},
\end{align*}
by the property of Wick Product and \eqref{6.16-1}, we have
\begin{align*}
D_kb_{1,i}^{\varepsilon,n}&D_kb_{j_2,j}^{\varepsilon,n}
=\sum_{t=1}^{j_2}\int_{\mathbb{R}^{2(j_2+1)}}\partial_{x_{t,k}}\gamma_{l_{1:j_2}}(x_{1:j_2})\partial_{x_{1,k}}\gamma(x_{j_2+1}):\pi_{l_1}^{x_1}\dots\pi_{l_{j_2}}^{x_{j_2}}\pi_i^{x_{j_2+1}}:dx_{1:j_2+1}\\
&+\sum_{t,m=1}^{j_2}\int_{\mathbb{R}^{2(j_2+1)}}\partial_{x_{t,k}}\gamma_{l_{1:j_2}}(x_{1:j_2})\partial_{x_{1,k}}\gamma(x_{j_2+1})\mathbb{E}^\varepsilon[\pi_{l_m}^{x_m}\pi_i^{x_{j_2+1}}]:\displaystyle\prod_{a=1,a\neq m}^{j_2}\pi_{l_a}^{x_a}:dx_{1:j_2+1}.
\end{align*}
Then we can observe that
\begin{align*}
\mathcal{F}(\Pi_{j_2+1}D_kb_{1,i}^{\varepsilon,n}D_kb_{j_2,j}^{\varepsilon,n})(p_{1:j_2+1})&=-p_{j_2+1,k}\left(\sum_{t=1}^{j_2}p_{t,k}\right)\mathcal{F}(b_{j_2,j}^{\varepsilon,n})(p_{1:j_2})\mathcal{F}(b_{1,i}^{\varepsilon,n})(p_{j_2+1}),\\
\mathcal{F}(\Pi_{j_2-1}D_kb_{1,i}^{\varepsilon,n}D_kb_{j_2,j}^{\varepsilon,n})(p_{1:j_2-1})&=- j_2
\int_{\mathbb{R}^2}\frac{\widehat{V}_{\varepsilon}(p)}{|p|^2}p_{j_2+1,k}\left(\sum_{t=1}^{j_2}p_{t,k}\right)\mathcal{F}(b_{j_2,j}^{\varepsilon,n})(p_{1:j_2-1},p)\overline{\mathcal{F}(b_{1,i}^{\varepsilon,n})(p)}dp.
\end{align*}
Similar to \eqref{eq:6.19}, we take the above equations into the square of the left hand side of \eqref{6.6}, with $m=j_2+1$. Since $G_n,G_{n+1-j_2}\geq 0$, by \eqref{6.14} and \eqref{Ab}, it can be upper bounded by
\begin{align*}
&\frac{\hat{\lambda}^2}{\log\frac1\varepsilon}\int_{\mathbb{R}^{2(j_2+1)}}\prod_{k=1}^{j_2+1}\frac{\hat{V}_\varepsilon(p_k)}{|p_k|^2}\frac{|(\sum_{k=1}^{j_2}p_k)\cdot p_{j_2+1}|^2}{\lambda+\frac{\nu^2}{2}|\sum_{k=1}^{j_2+1}p_k|^2}\frac{|p_{j_2+1}|^2}{(\lambda+\frac{\nu^2}{2}|\sum_{k=1}^{j_2}p_k|^2)^2(\lambda+\frac{\nu^2}{2}|p_{j_2+1}|^2)^2}\times\\
&\left|\mathcal{F}(\mathcal{A}_{+}^\varepsilon b_{j_2-1,j}^{\varepsilon,n})(p_{1:j_2})\right|^2dp_{1:j_2+1}.
\end{align*}
By the action of $\mathcal{A}_{+}^\varepsilon$ \eqref{gen}, it has the bound 
\begin{align}
&\frac{\hat{\lambda}^4}{(\log\frac1\varepsilon)^2} \int_{\mathbb{R}^{2(j_2+1)}}\prod_{k=1}^{j_2+1}\frac{\hat{V}_\varepsilon(p_k)}{|p_k|^2}\frac{|(\sum_{k=1}^{j_2}p_k)\cdot p_{j_2+1}|^2}{\lambda+\frac{\nu^2}{2}|\sum_{k=1}^{j_2+1}p_k|^2}
\frac{|p_{j_2+1}|^2|p_{j_2}\times (\sum_{k=1}^{j_2-1}p_k)|^2}{(\lambda+\frac{\nu^2}{2}|\sum_{k=1}^{j_2}p_k|^2)^2(\lambda+\frac{\nu^2}{2}|p_{j_2+1}|^2)^2}\times\nonumber\\
&\left|\mathcal{F}(b_{j_2-1,j}^{\varepsilon,n})(p_{1:j_2-1})\right|^2dp_{1:j_2+1} \nonumber\\	
\lesssim& \frac{2\hat{\lambda}^4}{\nu^2(\log\frac1\varepsilon)^2} ||(-\mathcal{L}_0)^{\frac12}b_{j_2-1,j}^{\varepsilon,n}||_\varepsilon^2\times\nonumber\\
&\left(\sup_{x\in\mathbb{R}^2}\int_{\mathbb{R}^2\times \mathbb{R}^2}\frac{\hat{V}_\varepsilon(p)\hat{V}_\varepsilon(q)|x+p|^2|q|^2}{(\lambda+\frac{\nu^2}{2}|x+p|^2)^2(\lambda+\frac{\nu^2}{2}|x+p+q|^2)(\lambda+\frac{\nu^2}{2}|q|^2)^2}dpdq\right).\label{eq:6.11-1}
\end{align}
Similar to the discussion with \eqref{eq:6.9}, for the term in the supremum in \eqref{eq:6.11-1}, by H\"older's inequality, we have \begin{align*}
&\int_{\mathbb{R}^2\times \mathbb{R}^2}\frac{\hat{V}_\varepsilon(p)\hat{V}_\varepsilon(q)|x+p|^2|q|^2}{(\lambda+\frac{\nu^2}{2}|x+p|^2)^2(\lambda+\frac{\nu^2}{2}|x+p+q|^2)(\lambda+\frac{\nu^2}{2}|q|^2)^2}dpdq\\
=&\int_{\mathbb{R}^2}\frac{\hat{V}(p)|\varepsilon x+p|^2}{(\varepsilon^2\lambda+\frac{\nu^2}{2}|\varepsilon x+p|^2)^2}\left(\int_{\mathbb{R}^2}\frac{\varepsilon^2\hat{V}(q)|q|^2}{(\varepsilon^2\lambda+\frac{\nu^2}{2}|\varepsilon x+p+q|^2)(\varepsilon^2\lambda+\frac{\nu^2}{2}|q|^2)^2}dq\right)dp\\
\lesssim&\frac1\lambda\int_{\mathbb{R}^2}\frac{\hat{V}(p)|\varepsilon x+p|^2}{(\varepsilon^2\lambda+\frac{\nu^2}{2}|\varepsilon x+p|^2)^2}dp
\lesssim\frac{\log\frac{1}{\varepsilon}}{\lambda}.
\end{align*}

Next, for $m=j_2-1$, the square of the left hand side of \eqref{6.6} is bounded by 
\begin{align}
(j_2)^2\int_{\mathbb{R}^{2(j_2-1)}}&\prod_{k=1}^{j_2-1}\frac{\hat{V}_\varepsilon(p_k)}{|p_k|^2}\frac{1}{\lambda+\frac{\nu^2}{2}|\sum_{k=1}^{j_2-1}p_k|^2}\times\nonumber\\
&\left|\int_{\mathbb{R}^2}\left(\sum_{k=1}^{j_2-1}p_k+p\right)\cdot p\frac{\widehat{V}_\varepsilon(p)}{|p|^2}\mathcal{F}(b_{j_2,j}^{\varepsilon,n})(p_{1:j_2-1},p)\overline{\mathcal{F}(b_{1,i}^{\varepsilon,n}(p))}dp\right|^2dp_{1:j_2-1}. \label{6.12}
\end{align}
By H\"older's inequality and \eqref{6.14},\eqref{Ab} again, it has the bound
\begin{align}
&(j_2)^2\int_{\mathbb{R}^{2(j_2-1)}}\prod_{k=1}^{j_2-1}\frac{\hat{V}_\varepsilon(p_k)}{|p_k|^2}\frac{|\mathcal{F}(b_{j_2-1,j}^{\varepsilon,n})(p_{1:j_2-1})|^2}{\lambda+\frac{\nu^2}{2}|\sum_{k=1}^{j_2-1}p_k|^2}\times\nonumber\\
&\left(\int_{\mathbb{R}^2}\frac{(\widehat{V}_\varepsilon(p))^2}{|p|^4}
\frac{|(\sum_{k=1}^{j_2-1}p_k+p)\cdot p|^2|p\times (\sum_{k=1}^{j_2-1}p_k)|^2}{|\sum_{k=1}^{j_2-1}p_k+p|^4}\frac{|p|^2}{(\lambda+\frac{\nu^2}{2}|p|^2)^2}dp\right)dp_{1:j_2-1}\nonumber\\	
\lesssim&\frac{2\hat{\lambda}^4}{\nu^2(\log\frac1\varepsilon)^2} ||b_{j_2-1,j}^{\varepsilon,n}||_\varepsilon^2
\left(\int_{\mathbb{R}^2}\frac{(\widehat{V}_{\varepsilon}(p))^2}{(\lambda+\frac{\nu^2}{2}|p|^2)^2}dp\right)\label{eq:6.12}.
\end{align}
\eqref{eq:6.12} has the same estimate as before,
\begin{align*}
\int_{\mathbb{R}^2}\frac{(\widehat{V}_{\varepsilon}(p))^2}{(\lambda+\frac{\nu^2}{2}|p|^2)^2}dp=\varepsilon^2\int_{\mathbb{R}^2}\frac{(\hat{V}(p))^2}{(\varepsilon^2\lambda+\frac{\nu^2}{2}|p|^2)^2}dp\lesssim\frac{1}{\nu^2\lambda}.
\end{align*}
Then combining with \eqref{eq:6.11-1} and Proposition \ref{prop5.1}, we complete the proof of \eqref{6.6}.	

\subsubsection{The case where $j_1,j_2>1$}
In order to use some useful properties of Wick Product, we recall the following definition, which has been given in \cite[Definition 5.5]{Cannizzaro.2023}.
\begin{definition}
Let $n,m\in\mathbb{N}^{+}$, and $\mathcal{G}[n,m]$ be the set of collections $G$ of disjoint edges connecting a point in $V_1^n=\{(1,i),i=1,\dots,n\}$ to one in $V_2^m=\{(2,j),j=1,\dots,m\}$. Denote $V[G]$ as the set of points in $V_1^n\cup V_2^m$ which are not endpoints of any edge in $G$. Let $V_1[G]:=V[G]\cap V_1^n, V_2[G]:=V[G]\cap V_2^m$.
\end{definition}
\cite[Theorem 3.15]{Janson.1997} gives the product property of Wick Product. For $n,m\in\mathbb{N}^{+}$,$l_{1:n},t_{1:m}\in\{1,2\}$, there holds 
\begin{align}
:\prod_{v\in V_1^n}\pi^{x_v}_{l_{v(2)}}:
:\prod_{v\in V_2^m}\pi^{x_v}_{t_{v(2)}}:
=\sum_{G\in\mathcal{G}[n,m]}\prod_{(a,b)\in G} \mathbb{E}^\varepsilon[\pi_{l_{a(2)}}^{x_a}\pi_{t_{b(2)}}^{x_b}]:\prod_{v\in V_1[G]}\pi_{l_{v(2)}}^{x_v}\prod_{v\in V_2[G]}\pi_{t_{v(2)}}^{x_v}:,\label{wick}
\end{align}
where $v(2)$ is the second component of $v$.
Then by \eqref{wick}, for fixed $j_1,j_2>1$, $1\leq m\leq 2n$, $k=1,2$, $\Pi_m(D_kb_{j_1,i}^{\varepsilon,n}D_kb_{j_2,j}^{\varepsilon,n})\neq 0$ only if $m=j_1+j_2-2r$, for some $r\in\mathbb{N}.$ Its Fourier transform equals   
\begin{align*}
\mathcal{F}(\Pi_m(D_kb_{j_1,i}^{\varepsilon,n}D_kb_{j_2,j}^{\varepsilon,n}))(p_{1:m})=\sum_{G\in\mathcal{G}[j_1,j_2]:|V[G]|=m}&\int_{\mathbb{R}^{2r}}\prod_{k=1}^r\frac{\hat{V}_\varepsilon(q_k)}{|q_k|^2}\mathcal{F}(D_kb_{j_1,i}^{\varepsilon,n})(q_{1:r},p_{1:m_1})\times\\
&\mathcal{F}(D_kb_{j_2,j}^{\varepsilon,n})(-q_{1:r},p_{m_1+1:m})dq_{1:r}\\
=:\sum_{G\in\mathcal{G}[j_1,j_2]:|V[G]|=m}&\mathcal{F}(f_{G,k}^\varepsilon)(p_{1:m}),
\end{align*}
where $m_1:=|V_1[G]|.$

Thus for the proof of \eqref{6.6}, it suffices to show $||(\lambda-\mathcal{L}_0)^{-\frac12}f_G^\varepsilon||\to 0$ when $\varepsilon\to0$, for all $G\in\{\mathcal{G}[j_1,j_2]:|V[G]=m|\}$, where $f_G^\varepsilon:=\sum_{k=1}^2f_{G,k}^\varepsilon$. We apply H\"older's inequality to get
\begin{align*}
|\mathcal{F}(f_{G}^\varepsilon)(p_{1:m})|\leq& 
\int_{\mathbb{R}^{2r}}\prod_{k=1}^{r}\frac{\hat{V}_\varepsilon(q_k)}{|q_k|^2}\left|\left(\sum_{t=1}^{r}q_t+\sum_{l=1}^{m_1}p_l\right)\cdot\left(\sum_{t=1}^{r}q_t+\sum_{l=m_1+1}^{m}p_l\right)\right|\times\\
&|\mathcal{F}(b_{j_1,i}^{\varepsilon,n})(q_{1:r},p_{1:m_1})\mathcal{F}(b_{j_2,j}^{\varepsilon,n})(-q_{1:r},p_{m_1+1:m})|dq_{1:r}\\
\leq&\left(\int_{\mathbb{R}^{2r}}\prod_{k=1}^{r}\frac{\hat{V}_\varepsilon(q_k)}{|q_k|^2}\left|\sum_{t=1}^{r}q_t+\sum_{l=1}^{m_1}p_l\right|^2|\mathcal{F}(b_{j_1,i}^{\varepsilon,n})(q_{1:r},p_{1:m_1})|^2dq_{1:r}\right)^\frac12\times\\
&\left(\int_{\mathbb{R}^{2r}}\prod_{k=1}^{r}\frac{\hat{V}_\varepsilon(q_k)}{|q_k|^2}\left|\sum_{t=1}^{r}q_t+\sum_{l=m_1+1}^{m}p_l\right|^2|\mathcal{F}(b_{j_2,j}^{\varepsilon,n})(q_{1:r},p_{m_1+1:m})|^2dq_{1:r}\right)^\frac12.
\end{align*}
Recalling the recursive definition $b_{j_1,i}^{\varepsilon,n}=[\lambda-\mathcal{L}_0(1+G_{n+1-i}(L^\varepsilon(\lambda-\mathcal{L}_0)))]^{-1}\mathcal{A}_{+}^\varepsilon b_{j_1-1,i}^{\varepsilon,n}$, we
take the above inequality into the square of the left hand side of \eqref{6.6}. By $G_{n+1-i}\geq 0$, we have 
\begin{align*}
&||(\lambda-\mathcal{L}_0)^{-\frac12}f_G^\varepsilon||_\varepsilon^2\\
\leq&\frac{\hat{\lambda}^4}{(\log\frac1\varepsilon)^2}\int_{\mathbb{R}^{2m}}\prod_{k=1}^{m}\frac{\hat{V}_\varepsilon(p_k)}{|p_k|^2}\frac{1}{\lambda+\frac{\nu^2}{2}|\sum_{k=1}^{m}p_k|^2}\times\\
&\left(\int_{\mathbb{R}^{2r}}\prod_{k=1}^{r}\frac{\hat{V}_\varepsilon(q_k)}{|q_k|^2}\frac{|\zeta_1|^2|x_1+\zeta_1|^2}{(\lambda+\frac{\nu^2}{2}|x_1+\zeta_1|^2)^2}\mathcal{F}(b_{j_1-1,i}^{\varepsilon,n})(q_{1:r},p_{1:m_1}/\zeta_1)|^2dq_{1:r}\right)\times\\
&\left(\int_{\mathbb{R}^{2r}}\prod_{k=1}^{r}\frac{\hat{V}_\varepsilon(q_k)}{|q_k|^2}\frac{|\zeta_2|^2|x_2+\zeta_2|^2}{(\lambda+\frac{\nu^2}{2}|x_2+\zeta_2|^2)^2}\mathcal{F}(b_{j_2-1,j}^{\varepsilon,n})(q_{1:r},p_{m_1+1:m}/\zeta_2)|^2dq_{1:r}\right)dp_{1:m}\\
\leq&\frac{\hat{\lambda}^4}{(\log\frac1\varepsilon)^2} ||b_{j_1-1,i}^{\varepsilon,n}||_\varepsilon^2||b_{j_2-1,j}^{\varepsilon,n}||_\varepsilon^2 \times\\
&\left(\sup_{x_1,x_2,y\in\mathbb{R}^2}\int_{\mathbb{R}^2\times \mathbb{R}^2}\frac{\hat{V}_\varepsilon(\zeta_1)\hat{V}_\varepsilon(\zeta_2)|x_1+\zeta_1|^2|x_2+\zeta_2|^2}{(\lambda+\frac{\nu^2}{2}|x_1+\zeta_1|^2)^2(\lambda+\frac{\nu^2}{2}|y+\zeta_1+\zeta_2|^2)(\lambda+\frac{\nu^2}{2}|x_2+\zeta_2|^2)^2}d\zeta_1\zeta_2\right),
\end{align*}
where $(q_{1:r},p_{1:m_1}/\zeta_1)$ denotes 
that $(q_{1:r},p_{1:m_1})$ removes $\zeta_1$. Since $\mathcal{F}(b_{j_1-1,i}^{\varepsilon,n})$ is symmetric, we can choose $\zeta_1$ in the way that there exists $v\in V_1[G]$ such that $\zeta_1=p_v$. Similarly, $\zeta_2=p_u$ for some $u\in V_2[G]$. $x_1$ is defined as $\sum_{t=1}^{r}q_t+\sum_{l=1}^{m_1}p_l-\zeta_1$, 
$x_2:=\sum_{t=1}^{r}q_t+\sum_{l=m_1+1}^{m}p_l-\zeta_2$.

Similarly as in the previous discussion, for the term in the supremum in the right hand side of the above inequality, for every $x_1,x_2,y\in\mathbb{R}^2$, by H\"older's inequality, there holds that
\begin{align*}
&\int_{\mathbb{R}^2\times \mathbb{R}^2}\frac{\hat{V}_\varepsilon(\zeta_1)\hat{V}_\varepsilon(\zeta_2)|x_1+\zeta_1|^2|x_2+\zeta_2|^2}{(\lambda+\frac{\nu^2}{2}|x_1+\zeta_1|^2)^2(\lambda+\frac{\nu^2}{2}|y+\zeta_1+\zeta_2|^2)(\lambda+\frac{\nu^2}{2}|x_2+\zeta_2|^2)^2}d\zeta_1\zeta_2\\
\lesssim& \frac{1}{\nu^2\lambda}\int_{\mathbb{R}^2}\frac{\hat{V}(\zeta_1)|\varepsilon x_1+\zeta_1|^2}{(\varepsilon^2\lambda+\frac{\nu^2}{2}|\varepsilon x_1+\zeta_1|^2)^2}d\zeta_1\lesssim\frac{\log\frac1\varepsilon}{\lambda}.
\end{align*}
Then the proof is concluded by \eqref{ineq-b1}.

\appendix
\section*{Appendix}\label{app}
\setcounter{equation}{0}
\renewcommand\theequation{A.\arabic{equation}}
\setcounter{theorem}{0}
\renewcommand\thetheorem{A.\arabic{theorem}}
\setcounter{section}{1}
\renewcommand\thesection{A.\arabic{section}}

The appendix is devoted to give an estimate for the integral
\begin{align}\label{A1}
\frac{\hat{\lambda}^2}{\log\frac{1}{\varepsilon}}\frac{2}{\nu^2}\int_{\mathbb{R}^2}\hat{V}(\varepsilon q)(\sin^2\theta)
\frac{\lambda+\frac{\nu^2}{2}|\sum_{i=1}^{n}x_i+q|^2H^{+}(L^{\varepsilon}(\lambda+\frac{\nu^2}{2}|\sum_{i=1}^{n}x_i+q|^2))}{(\lambda+\frac{\nu^2}{2}|\sum_{i=1}^{n}x_i+q|^2H(L^{\varepsilon}(\lambda+\frac{\nu^2}{2}|\sum_{i=1}^{n}x_i+q|^2)))^2}dq
\end{align} in Lemma \ref{lem:rep}, where $\theta$ is the angle between $q$ and $\sum_{i=1}^{n}x_i$.
For simplicity, we denote that
\begin{equation}\label{4.10}
\lambda_{\varepsilon}=\varepsilon^2\lambda,\;\Gamma(q)=\frac{\nu^2}{2}\left|\varepsilon\sum_{i=1}^{n}x_i+q\right|^2,\;\Gamma_1(q)=\lambda_{\varepsilon}+\Gamma(q),\;\Gamma_2(q)=\frac{\nu^2}{2}\left(|q|^2+\left|\varepsilon\sum_{i=1}^{n}x_i\right|^2\right)+\lambda_{\varepsilon}.
\end{equation}
By variable substitution, \eqref{A1} equals
\begin{equation}
\frac{\hat{\lambda}^2}{\log\frac{1}{\varepsilon}}\frac{2}{\nu^2}\int_{\mathbb{R}^2}\hat{V}(q)(\sin^2\theta)
\frac{\lambda_{\varepsilon}+\Gamma(q)H^{+}(L^{\varepsilon}(\frac{1}{\varepsilon^2}\Gamma_1(q))}{(\lambda_{\varepsilon}+\Gamma(q)H(L^{\varepsilon}(\frac{1}{\varepsilon^2}\Gamma_1(q))))^2}dq.
\end{equation}
The main result of appendix is the following lemma.
\begin{lemma}\label{lem7.1}
Fix $\lambda>0, n\in\mathbb{N}^{+}$. Let $x_i\in\mathbb{R}^2,i=1,\dots,n$. If $\sum_{i=1}^nx_i\neq 0$, 
\begin{align}\label{A4}
\left|\int_{\mathbb{R}^2}\hat{V}(q)(\sin^2\theta)
\frac{\lambda_{\varepsilon}+\Gamma(q)H^{+}(L^{\varepsilon}(\frac{1}{\varepsilon^2}\Gamma_1(q))}{(\lambda_{\varepsilon}+\Gamma(q)H(L^{\varepsilon}(\frac{1}{\varepsilon^2}\Gamma_1(q))))^2}dq-
\frac{\pi}{\nu^2}\int_{\Gamma_1(0)}^{1}\frac{H^{+}(L^{\varepsilon}(\frac{\rho}{\varepsilon^2}))}{\rho(\rho+1)(H(L^{\varepsilon}(\frac{\rho}{\varepsilon^2})))^2}
d\rho\right|
\end{align}
is bounded uniformly in $\varepsilon,x_{1:n}$, where $\theta$ is the angle between $\sum_{i=1}^nx_i$ and $q$.
If $\sum_{i=1}^nx_i=0$, 
\begin{align}\label{A5}
\left|\int_{\mathbb{R}^2}\hat{V}(q)
\frac{\lambda_{\varepsilon}+\frac{\nu^2}{2}|q|^2H^{+}(L^{\varepsilon}(\frac{1}{\varepsilon^2}(\lambda_{\varepsilon}+\frac{\nu^2}{2}|q|^2))}{(\lambda_{\varepsilon}+\frac{\nu^2}{2}|q|^2H(L^{\varepsilon}(\frac{1}{\varepsilon^2}(\lambda_{\varepsilon}+\frac{\nu^2}{2}|q|^2))))^2}dq-
\frac{2\pi}{\nu^2}\int_{\varepsilon^2\lambda}^{1}\frac{H^{+}(L^{\varepsilon}(\frac{\rho}{\varepsilon^2}))}{\rho(\rho+1)(H(L^{\varepsilon}(\frac{\rho}{\varepsilon^2})))^2}d\rho\right|
\end{align}
is bounded uniformly in $\varepsilon,x_{1:n}$.
\end{lemma}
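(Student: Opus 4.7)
The plan is to reduce the left-hand integral to a one-dimensional integral in a radial variable $\rho = \Gamma_2(q)$ via polar coordinates adapted to $\varepsilon\sum x_i$, then to compare pointwise with the target integrand $B(\rho) := \frac{H^+(L^\varepsilon(\rho/\varepsilon^2))}{\rho(\rho+1)H(L^\varepsilon(\rho/\varepsilon^2))^2}$. Write $q = r(\cos\phi,\sin\phi)$ with $\phi$ measured from $\sum x_i$ and $a := \varepsilon|\sum x_i|$, so that $\sin^2\theta = \sin^2\phi$ and $\Gamma(q) = \frac{\nu^2}{2}(r^2+a^2+2ar\cos\phi)$ while $\Gamma_2(q)-\lambda_\varepsilon = \frac{\nu^2}{2}(r^2+a^2)$. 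The case $\sum x_i = 0$ (equation \eqref{A5}) is the degenerate case $a=0$, where $\Gamma$ has no angular dependence and the factor $\sin^2\theta$ is absent, so only the radial change of variable below is needed.

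The technical core is replacing $\Gamma$ by $\Gamma_2-\lambda_\varepsilon$, both in the rational fraction and inside the argument of $L^\varepsilon$. On the bounded annulus $r \in [a/2, 2a]$ the integrand is uniformly bounded and the region has area $O(a^2)$, so its contribution is $O(1)$. Off this annulus, $r^2+a^2+2ar\cos\phi$ and $r^2+a^2$ are comparable up to a fixed factor; since $L^\varepsilon(x) = \frac{\pi\hat\lambda^2}{|\log\varepsilon^2|}\log(1+\frac{1}{\varepsilon^2 x})$, a bounded multiplicative perturbation of the argument changes $L^\varepsilon$ by at most $O(1/\log(1/\varepsilon))$. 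Combined with the Lipschitz bounds on $H,H^+$ and $H \geq 1$ from the hypothesis, this induces a relative perturbation $1 + O(1/\log(1/\varepsilon))$ on the integrand; since the unperturbed integral is itself $O(\log(1/\varepsilon))$, the accumulated error is $O(1)$. Once $\Gamma$ is so replaced, the only $\phi$-dependence is $\sin^2\phi$, whose angular integral is $\pi$, and the change of variable $\rho = \Gamma_2(q)$, $r\,dr = d\rho/\nu^2$, yields
\[
\frac{\pi}{\nu^2}\int_{\Gamma_1(0)}^{\rho_{\max}} \hat V(r(\rho))\,A(\rho)\,d\rho,
\qquad
A(\rho) := \frac{\lambda_\varepsilon + (\rho-\lambda_\varepsilon)H^+(L^\varepsilon(\rho/\varepsilon^2))}{\bigl(\lambda_\varepsilon + (\rho-\lambda_\varepsilon)H(L^\varepsilon(\rho/\varepsilon^2))\bigr)^2},
\]
with $\rho_{\max} = \lambda_\varepsilon + \frac{\nu^2}{2}(1+a^2)$ dictated by the support of $\hat V$.

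Finally, I would compare this reduced integral with the target $\frac{\pi}{\nu^2}\int_{\Gamma_1(0)}^1 B(\rho)\,d\rho$ by elementary algebra. On the bulk $\{\rho - \lambda_\varepsilon \geq \lambda_\varepsilon\}$, a direct calculation gives $|A(\rho)-B(\rho)| \lesssim 1/(\rho+1)$, which integrates to $O(1)$; on the boundary layer $\{\rho \leq 2\lambda_\varepsilon\}$ both $A$ and $B$ are of order $1/\lambda_\varepsilon$ while the range has length $O(\lambda_\varepsilon)$, again $O(1)$. The mismatch between the upper limits $\rho_{\max}$ and $1$ is handled in two regimes: when $a = O(1)$ the extra $\rho$-interval is bounded and the integrand is bounded on it; when $a$ is large, $\int_1^\infty \frac{d\rho}{\rho(\rho+1)} = \log 2$ controls the $B$-tail, while on $[\rho_{\max}-\nu^2/2, \rho_{\max}]$ one has $A(\rho) \lesssim 1/a^2$, so the $A$-tail over this short range is $O(1/a^2)$. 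The departure of $\hat V(r(\rho))$ from $1$ happens only on a fixed compact $\rho$-region with bounded integrand, contributing $O(1)$.

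The principal obstacle is the angular straightening step: one must show that perturbations of $\Gamma$ by the cross term $\nu^2 a r\cos\phi$ propagate through the nonlinear argument of $L^\varepsilon$ and yield only an $O(1)$ distortion of an integral whose natural size is $\log(1/\varepsilon)$. This is precisely where the $1/|\log\varepsilon^2|$ normalization built into $L^\varepsilon$, together with the hypotheses $H \geq 1$ and the $C^1$ bounds on $H, H^+$ from Lemma \ref{lem:rep}, are indispensable.
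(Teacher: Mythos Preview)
Your overall route---remove the angular cross-term $\nu^2 ar\cos\phi$ to pass to a radial integral, then compare with $B(\rho)$---matches the paper's. But the execution of the angular-straightening step has a genuine gap.

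Off the annulus you assert that the replacement $\Gamma\to\Gamma_2-\lambda_\varepsilon$ ``induces a relative perturbation $1+O(1/\log(1/\varepsilon))$ on the integrand.'' This is correct only for the dependence that passes through $L^\varepsilon$: a bounded multiplicative change in the argument of $L^\varepsilon$ indeed shifts its value by $O(1/\log(1/\varepsilon))$, and by the Lipschitz bounds this perturbs $H,H^+$ by the same amount. But $\Gamma$ also appears \emph{directly} in the rational fraction $\frac{\lambda_\varepsilon+\Gamma H^+}{(\lambda_\varepsilon+\Gamma H)^2}$, and there replacing $\Gamma$ by $c\Gamma$ with $c\in[1/5,5]$ changes the fraction by an $O(1)$ multiplicative factor that is not $1+o(1)$. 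Your error budget $O(\log(1/\varepsilon))\times O(1/\log(1/\varepsilon))=O(1)$ is therefore unjustified for this piece. What is actually needed is a direct bound on the integrated difference. Writing $g(x)=\frac{\lambda_\varepsilon+xH^+}{(\lambda_\varepsilon+xH)^2}$ one has $|g'(x)|\lesssim(\lambda_\varepsilon+x)^{-2}$, so off the annulus (where $\Gamma\asymp\Gamma_2-\lambda_\varepsilon\asymp r^2+a^2$) the pointwise error is $\lesssim ar/(r^2+a^2)^2$, and $\int_0^\infty ar^2(r^2+a^2)^{-2}\,dr=O(1)$. This is how the paper closes the step, via the estimate $\int|\Gamma_1-\Gamma_2|/(\Gamma_1\Gamma_2)\,dq=O(1)$ after a similar region split.

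There is a secondary imprecision on the annulus: the integrand is \emph{not} uniformly bounded there. Near $q=-\varepsilon p$ one has $\Gamma\to0$ and the fraction blows up like $1/\lambda_\varepsilon$. The compensating mechanism is the elementary inequality $\sin^2\theta\le|\varepsilon p+q|^2/a^2=2\Gamma/(\nu^2 a^2)$, which yields integrand $\lesssim1/a^2$; combined with area $O(a^2)$ this gives $O(1)$. You did not invoke this inequality, and without it the annulus bound fails when $a$ is small.
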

\begin{remark}\label{rem:A1}
	In the proof of this lemma, the argument of $H, H^{+}$ is always $L^{\varepsilon}(\frac{1}{\varepsilon^2}\Gamma_1(q)),L^{\varepsilon}(\frac{1}{\varepsilon^2}\Gamma_2(q))$, where $\Gamma_1,\Gamma_2$ is defined in \eqref{4.10}. When $n$ is fixed, we have $\Gamma_1(y),\Gamma_2(y) \geq\lambda_{\varepsilon}$.
Thus $L^{\varepsilon}(\frac{1}{\varepsilon^2}\Gamma_i(y)),i=1,2$ are bounded, the condition $(i)$ in Lemma \ref{lem:rep} is always satisfied.
\end{remark}
\begin{proof}
First we assume $\sum_{i=1}^nx_i\neq 0$.
For \eqref{A4}, we add and subtract some terms to get
\begin{align}
&\left|\int_{\mathbb{R}^2}\hat{V}(q)(\sin^2\theta)
\frac{\lambda_{\varepsilon}+\Gamma(q)H^{+}(L^{\varepsilon}(\frac{1}{\varepsilon^2}\Gamma_1(q)))}{(\lambda_{\varepsilon}+\Gamma(q)H(L^{\varepsilon}(\frac{1}{\varepsilon^2}\Gamma_1(q))))^2}dq-
\int_{\mathbb{R}^2}\hat{V}(q)(\sin^2\theta)
\frac{H^{+}(L^{\varepsilon}(\frac{1}{\varepsilon^2}\Gamma_1(q)))}{\Gamma_1(q)(H(L^{\varepsilon}(\frac{1}{\varepsilon^2}\Gamma_1(q))))^2}dq\right|, \label{7.4}\\
&\left|\int_{\mathbb{R}^2}\hat{V}(q)(\sin^2\theta)
\frac{H^{+}(L^{\varepsilon}(\frac{1}{\varepsilon^2}\Gamma_1(q)))}{\Gamma_1(q)(H(L^{\varepsilon}(\frac{1}{\varepsilon^2}\Gamma_1(q))))^2}dq-
\int_{\mathbb{R}^2}\hat{V}(q)(\sin^2\theta)
\frac{H^{+}(L^{\varepsilon}(\frac{1}{\varepsilon^2}\Gamma_2(q)))}{\Gamma_2(q)(H(L^{\varepsilon}(\frac{1}{\varepsilon^2}\Gamma_2(q))))^2}dq\right|,\label{7.5}\\
&\left|\int_{\mathbb{R}^2}\hat{V}(q)(\sin^2\theta)
\frac{H^{+}(L^{\varepsilon}(\frac{1}{\varepsilon^2}\Gamma_2(q)))}{\Gamma_2(q)(H(L^{\varepsilon}(\frac{1}{\varepsilon^2}\Gamma_2(q))))^2}dq-
\frac{\pi}{\nu^2}\int_{\Gamma_1(0)}^{1}\frac{H^{+}(L^{\varepsilon}(\frac{\rho}{\varepsilon^2}))}{\rho(\rho+1)(H(L^{\varepsilon}(\frac{\rho}{\varepsilon^2})))^2}
d\rho\right|.\label{7.6}
\end{align}
In the following it suffices to prove that \eqref{7.4}-\eqref{7.6} are bounded uniformly in $\varepsilon,x_{1:n}$.
				
Let us start with \eqref{7.4}. In order to lighten the notation, we omit the arguments of the functions involved. By Assumption \ref{rho}, we have
\begin{align}
&\left|\int_{\mathbb{R}^2}\hat{V}(q)(\sin^2\theta)
\frac{\lambda_{\varepsilon}+\Gamma(q)H^{+}(L^{\varepsilon}(\frac{1}{\varepsilon^2}\Gamma_1(q))}{(\lambda_{\varepsilon}+\Gamma(q)H(L^{\varepsilon}(\frac{1}{\varepsilon^2}\Gamma_1(q))))^2}dq-
\int_{\mathbb{R}^2}\hat{V}(q)(\sin^2\theta)
\frac{H^{+}(L^{\varepsilon}(\frac{1}{\varepsilon^2}\Gamma_1(q)))}{\Gamma_1(q)(H(L^{\varepsilon}(\frac{1}{\varepsilon^2}\Gamma_1(q))))^2}dq\right|\nonumber\\
\lesssim&\int_{\{|q|\leq 1\}}\left|\frac{\lambda_{\varepsilon}+\Gamma H^{+}}{(\lambda_{\varepsilon}+\Gamma H)^2}-\frac{\lambda_{\varepsilon}+\Gamma H^{+}}{(\Gamma_1 H)^2}\right|dq-
\int_{\{|q|\leq 1\}}\left|\frac{\lambda_{\varepsilon}+\Gamma H^{+}}{(\Gamma_1H)^2}-\frac{H^{+}}{\Gamma_1H^2}\right|dq\nonumber\\
=&\int_{\{|q|\leq 1\}}\frac{(\lambda_{\varepsilon}+\Gamma H^{+})|(\Gamma_1 H)^2-(\lambda_{\varepsilon}+\Gamma H)^2|}{(\lambda_{\varepsilon}+\Gamma H)^2(\Gamma_1 H)^2}-\frac{|\lambda_{\varepsilon}+\Gamma H^{+}-\Gamma_1H^{+}|}{(\Gamma_1H)^2}dq.\label{numerator}
\end{align}
By Remark \ref{rem:A1} and $\Gamma_1=\Gamma+\lambda_\varepsilon$, the numerators in the integral of \eqref{numerator} have estimates respectively:
\begin{align*}
0\leq &(\Gamma_1 H)^2-(\lambda_{\varepsilon}+\Gamma H)^2=(H-1)\lambda_{\varepsilon}[\lambda_{\varepsilon}(H+1)+2\Gamma H]\lesssim \lambda_{\varepsilon}\Gamma_1,\\
&|\lambda_{\varepsilon}+\Gamma H^{+}-\Gamma_1H^{+}|=|\lambda_{\varepsilon}(1-H^{+})|\lesssim \lambda_{\varepsilon}.
\end{align*}
Therefore we have
\begin{align*}
\eqref{7.4}\lesssim\int_{|y|\leq 1}\frac{\lambda_{\varepsilon}}{(\Gamma_1)^2}dy
&=\int_{|y|\leq 1}\frac{\lambda_{\varepsilon}}{(\lambda_{\varepsilon}+\frac{\nu^2}{2}|\varepsilon\sum_{i=1}^{n}x_i+y|^2)^2}dy\\
&\leq \int_{|t|\leq 1}\frac{\lambda_{\varepsilon}}{(\lambda_{\varepsilon}+\frac{\nu^2}{2}|t|^2)^2}dt\lesssim 1.
\end{align*}
				
For \eqref{7.5}. we split the domain of the integral into $\{|q+\varepsilon\sum_{i=1}^{n}x_i|<|\varepsilon\sum_{i=1}^{n}x_i|\}$ and $\{|q+\varepsilon\sum_{i=1}^{n}x_i|\geq|\varepsilon\sum_{i=1}^{n}x_i|\}$. Let $p:=\varepsilon\sum_{i=1}^{n}x_i$. Then $\theta$ is the angle between $p$ and $q$ and it holds that $\sin^2\theta\leq\frac{|p+q|^2}{|p|^2}$.
For the first domain $\{q:|p+q|<|p|\}\subseteq\{q:|q|<2|p|\}$, there exists a constant $C$ such that the integral can be bounded by
\begin{align*}
C\int_{|p+q|<|p|}\left(\frac{\sin^2\theta}{\Gamma_1(q)}+\frac{1}{\Gamma_2(q)}\right)dq
&\lesssim \int_{|p+q|<|p|}\frac{|p+q|^2}{|p|^2(\lambda_{\varepsilon}+\frac{\nu^2}{2}|p+q|^2)}+\frac{1}{\frac{\nu^2}{2}(|q|^2+|p|^2)+\lambda_{\varepsilon}}dq\\
&\lesssim\frac{2}{\nu^2|p|^2}\int_{|q|<2|p|}dq\lesssim 1.
\end{align*}
For another domain $\{q:|p+q|\geq|p|\}$, we still add and subtract one term, and it has the bound
\begin{align} 	                                 &\int_{|p+q|\geq|p|}\left|\frac{H^{+}(L^{\varepsilon}(\frac{1}{\varepsilon^2}\Gamma_1(q)))}{\Gamma_1(q)(H(L^{\varepsilon}(\frac{1}{\varepsilon^2}\Gamma_1(q))))^2}-\frac{H^{+}(L^{\varepsilon}(\frac{1}{\varepsilon^2}\Gamma_1(q)))}{\Gamma_2(q)(H(L^{\varepsilon}(\frac{1}{\varepsilon^2}\Gamma_2(q))))^2}\right| \label{7.7}\\
&+\left|\frac{H^{+}(L^{\varepsilon}(\frac{1}{\varepsilon^2}\Gamma_1(q)))}{\Gamma_2(q)(H(L^{\varepsilon}(\frac{1}{\varepsilon^2}\Gamma_2(q))))^2}-\frac{H^{+}(L^{\varepsilon}(\frac{1}{\varepsilon^2}\Gamma_2(q)))}{\Gamma_2(q)(H(L^{\varepsilon}(\frac{1}{\varepsilon^2}\Gamma_2(q))))^2}\right|dq. \label{7.8}
\end{align}
For \eqref{7.7}, we set function $G(z):=z(H(L^{\varepsilon}(\frac{1}{\varepsilon^2}z)))^2$. Then
$$G^{'}(z)=(H(L^{\varepsilon}(\frac{1}{\varepsilon^2}z)))^2-2H^{'}(L^{\varepsilon}(\frac{1}{\varepsilon^2}z))\frac{\pi\hat{\lambda}^2}{\log\frac{1}{\varepsilon^2}}\frac{1}{z+1},$$ and $|G'(z)|\lesssim 1$. Thus $|G(\Gamma_2)-G(\Gamma_1)|\lesssim |\Gamma_2-\Gamma_1|$, 
and $$\eqref{7.7}\lesssim\int_{|p+q|\geq|p|}\frac{|\Gamma_2-\Gamma_1|}{\Gamma_1\Gamma_2}dq.$$
For \eqref{7.8}, since 
$$\left|\frac{d}{dz}H^{+}(L^{\varepsilon}(\frac{1}{\varepsilon^2}z))\right|=(H^{+})^{'}\left(L^{\varepsilon}(\frac{1}{\varepsilon^2}z)\right)\frac{\pi\hat{\lambda}^2}{\log\frac{1}{\varepsilon^2}}\frac{1}{z(1+z)}\lesssim \frac1z,$$
$\Gamma_1\leq 2\Gamma_2$, and $\Gamma_1\wedge\Gamma_2\geq \frac{\Gamma_1}{2}$, we have $$\eqref{7.8}\lesssim\int_{|p+q|\geq|p|}\frac{|\Gamma_2-\Gamma_1|}{\Gamma_2(\Gamma_1\wedge\Gamma_2)}dq\lesssim \int_{|p+q|\geq|p|}\frac{|\Gamma_2-\Gamma_1|}{\Gamma_1\Gamma_2}dq.$$
In summary, the integral in the second domain is bounded by
\begin{align}
C\int_{\{|q+p|\geq |p|\}}\frac{|\Gamma_2-\Gamma_1|}{\Gamma_1\Gamma_2}dq
&\lesssim\int_{\{|q+p|\geq |p|\}}\frac{|pq|}{|p+q|^2(|p|^2+|q|^2)}dq\\
&=\left(\int_{\{|q+p|\geq |p|\}\cap\{|q|\geq 2|p|\}}+\int_{\{|q+p|\geq |p|\}\cap\{|q|<2|p|\}}\right)\frac{|pq|}{|p+q|^2(|p|^2+|q|^2)}dq.\label{7.10}
\end{align}
Since $\{|q|\geq 2|p|\}\subseteq\{|p+q|\geq\frac{|q|}{2}\}$, 
\begin{align*}\eqref{7.10}
&\lesssim|p|\int_{\{|q|\geq 2|p|\}}\frac{|q|}{|q|^4}dq+\int_{\{|q|< 2|p|\}}\frac{|p||q|}{|p|^4}dq\\
&=|p|\int_{2|p|}^{1}\frac{1}{r^2}dr+\frac{1}{|p|^3}\int_{0}^{2|p|}r^2dr\lesssim 1.
\end{align*}
Finally we consider \eqref{7.6}.
We split the first integral in \eqref{7.6} over two regions $\{q:\frac{2}{\nu^2}(1-\lambda_{\varepsilon})-\varepsilon^2|p|^2\leq|q|^2\leq 1\}$, and $\{q:|q|^2< 1-\lambda_{\varepsilon}-\varepsilon^2|p|^2\}.$ In the first domain, there exists a constant $C$ such that it can be bounded by 
$$C\int_{\{q:1\geq|q|^2\geq \frac{2}{\nu^2}(1-\lambda_{\varepsilon})-\varepsilon^2|p|^2\}}\frac{\hat{V}(q)}{\lambda_{\varepsilon}+\frac{\nu^2}{2}(\varepsilon^2|p|^2+|q|^2)}dq\lesssim 1.$$ For the second domain, there holds that
\begin{align}
&\int_{\{|q|^2<\frac{2}{\nu^2}(1-\lambda_{\varepsilon})-\varepsilon^2|p|^2\}}\hat{V}(q)(\sin^2\theta)
\frac{H^{+}(L^{\varepsilon}(\frac{1}{\varepsilon^2}\Gamma_2(q)))}{\Gamma_2(q)(H(L^{\varepsilon}(\frac{1}{\varepsilon^2}\Gamma_2(q))))^2}dq\\
=&\int_{\{|q|^2< \frac{2}{\nu^2}(1-\lambda_{\varepsilon})-\varepsilon^2|p|^2\}}(\hat{V}(q)-\hat{V}(0))(\sin^2\theta)\frac{H^{+}(L^{\varepsilon}(\frac{1}{\varepsilon^2}\Gamma_2(q)))}{\Gamma_2(q)(H(L^{\varepsilon}(\frac{1}{\varepsilon^2}\Gamma_2(q))))^2}dq  \label{7.12}\\
+&\int_{\{|q|^2< \frac{2}{\nu^2}(1-\lambda_{\varepsilon})-\varepsilon^2|p|^2\}}\hat{V}(0)(\sin^2\theta)
\frac{H^{+}(L^{\varepsilon}(\frac{1}{\varepsilon^2}\Gamma_2(q)))}{\Gamma_2(q)(H(L^{\varepsilon}(\frac{1}{\varepsilon^2}\Gamma_2(q))))^2}dq.\label{7.13}
\end{align}
Since $\hat{V}$ is a smooth and radial function, there exists a constant $C$ such that for all $|q|<1$, $|\hat{V}(q)-\hat{V}(0)|\leq C|q|^2$. Therefore 
$$\eqref{7.12}\leq \int_{\{|q|^2< 1-\lambda_{\varepsilon}-\varepsilon^2|p|^2\}}\frac{2|\hat{V}(q)-\hat{V}(0)|\sin^2(\theta)H^{+}}{\nu^2|q|^2H^2}dq
\lesssim\frac{2}{\nu^2}\int_{|q|\leq 1}dq\lesssim 1.$$
For \eqref{7.13}, since $\hat{V}(0)=\int V(q)dq=1$, we have
\begin{align*}
&\int_{\{|q|^2< \frac{2}{\nu^2}(1-\lambda_{\varepsilon})-\varepsilon^2|p|^2\}}\hat{V}(0)(\sin^2\theta)
\frac{H^{+}(L^{\varepsilon}(\frac{1}{\varepsilon^2}\Gamma_2(q)))}{\Gamma_2(q)(H(L^{\varepsilon}(\frac{1}{\varepsilon^2}\Gamma_2(q))))^2}dq\\
=&\int_{0}^{2\pi}\sin^2\theta d\theta\int_{0}^{\sqrt{\frac{2}{\nu^2}(1-\lambda_{\varepsilon})-\varepsilon^2|p|^2}}\frac{rH^{+}(L^{\varepsilon}(\frac{1}{\varepsilon^2}(\frac{\nu^2}{2}(r^2+\varepsilon^2|p|^2)+\lambda_{\varepsilon})))}{(\frac{\nu^2}{2}(r^2+\varepsilon^2|p|^2)+\lambda_{\varepsilon})(H(L^{\varepsilon}(\frac{1}{\varepsilon^2}(\frac{\nu^2}{2}(r^2+\varepsilon^2|p|^2)+\lambda_{\varepsilon}))))^2}dr\\
=&\frac{\pi}{\nu^2}\int_{\frac{\nu^2}{2}\varepsilon^2|p|^2+\lambda_{\varepsilon}}^{1}\frac{H^{+}(L^{\varepsilon}(\frac{\rho}{\varepsilon^2}))}{\rho(H(L^{\varepsilon}(\frac{\rho}{\varepsilon^2})))^2}d\rho,
\end{align*}
and $$\left|\int_{\Gamma_1(0)}^{1}\frac{H^{+}(L^{\varepsilon}(\frac{\rho}{\varepsilon^2}))}{\rho(H(L^{\varepsilon}(\frac{\rho}{\varepsilon^2})))^2}d\rho-\int_{\Gamma_1(0)}^{1}\frac{H^{+}(L^{\varepsilon}(\frac{\rho}{\varepsilon^2}))}{\rho(\rho+1)(H(L^{\varepsilon}(\frac{\rho}{\varepsilon^2})))^2}d\rho\right|\lesssim \int_{0}^1\frac{1}{\rho+1}d\rho\leq 1.$$
Thus the first part of the proof is concluded.	
				
If $\sum_{i=1}^nx_i=0$, similarly we can get the bound for \eqref{7.4},\eqref{7.5}. For \eqref{7.6}, estimates are the same except for \eqref{7.13},
\begin{align*}
&\int_{\{|q|^2< \frac{2}{\nu^2}(1-\lambda_{\varepsilon})\}}\hat{V}(0)
\frac{H^{+}(L^{\varepsilon}(\frac{1}{\varepsilon^2}\Gamma_2(q)))}{\Gamma_2(q)(H(L^{\varepsilon}(\frac{1}{\varepsilon^2}\Gamma_2(q))))^2}dq\\
=&2\pi\int_{0}^{\sqrt{\frac{2}{\nu^2}(1-\lambda_{\varepsilon})}}\frac{rH^{+}(L^{\varepsilon}(\frac{1}{\varepsilon^2}(\frac{\nu^2}{2}r^2+\lambda_{\varepsilon})))}{(\frac{\nu^2}{2}r^2+\lambda_{\varepsilon})(H(L^{\varepsilon}(\frac{1}{\varepsilon^2}(\frac{\nu^2}{2}r^2+\lambda_{\varepsilon}))))^2}dr\\
=&\frac{2\pi}{\nu^2}\int_{\lambda_{\varepsilon}}^{1}\frac{H^{+}(L^{\varepsilon}(\frac{\rho}{\varepsilon^2}))}{\rho(H(L^{\varepsilon}(\frac{\rho}{\varepsilon^2})))^2}d\rho.
\end{align*}
Then the proof is concluded.
\end{proof}

\textbf{Acknowledgements:} We would like to thank Prof.Xiangchan Zhu for enlightening discussions and fruitful suggestions. 

\bibliographystyle{alpha}

\end{document}